\pgfplotsset{compat=1.18} 
\definecolor{blueXXredX}{rgb}{0.66, 0.33, 1.0}
\definecolor{blueXredXXXX}{rgb}{1.0, 0.22, 1.0}
\definecolor{darkBlue}{rgb}{0.16, 0.32, 0.75}
\definecolor{myGreen}{rgb}{0.18039216 0.49803922 0.09411765}
\definecolor{darkred}{rgb}{0.65, 0.0, 0.0}
\definecolor{CeruleanBlue}{rgb}{0.16, 0.32, 0.75}
\newtheorem{theorem}{Theorem}[section]
\theoremstyle{plain}
\newtheorem{corollary}[theorem]{Corollary}
\newtheorem{example}[theorem]{Example}
\newtheorem{lemma}[theorem]{Lemma}
\newtheorem{definition}[theorem]{Definition}
\newtheorem{proposition}[theorem]{Proposition}
\newtheorem{remark}[theorem]{Remark}
\newcommand{\R}{\mathbb{R}}
\newcommand{\N}{\mathbb{N}}
\newcommand{\Z}{\mathbb{Z}}
\DeclareMathOperator{\Span}{span}
\DeclareMathOperator{\bary}{bary}
\newcommand\LieAlg{\mathfrak{g}}
\newcommand\G{\mathcal{G}}
\newcommand\ec{1}
\newcommand\axis{\mathsf{Axis}}
\newcommand\ax{\mathsf{Ax}}
\newcommand\rank{\mathsf{rk}}
\DeclareMathOperator{\eval}{eval}
\DeclareMathOperator{\level}{level}
\title{Learning barycenters from signature matrices}
\author[1]{Carlos Am\'endola\thanks{Email: \url{amendola@math.tu-berlin.de}}}
\author[1]{Leonard Schmitz\thanks{Email: \url{lschmitz@math.tu-berlin.de}}}
\affil[1]{\small
    {Institute of Mathematics, Technical University Berlin, Germany}
}
\date{}
\begin{document}

\maketitle

\begin{abstract}
The expected signature of a family of paths need not be a signature of a path itself. Motivated by this, we consider the notion of a Lie group barycenter introduced by Buser and Karcher to propose a barycenter on path signatures. We show that every element of the free nilpotent Lie group is a barycenter of a group sample, where all but one sample element can be fixed arbitrarily. In the case of piecewise linear paths, we study the problem of recovering an underlying path corresponding to the barycenter of signatures. We determine the minimal number of segments required to learn from signature matrices, providing explicit transformations to the associated congruence normal forms.
\end{abstract}

\textbf{Keywords:} Group barycenters, nilpotent Lie groups, non-commutative polynomials, matrix congruence normal forms, path recovery,  signatures and data streams
\\

\textbf{MSC codes:}
60L10, 
22E25, 
15A21, 
14Q15 

\section{Introduction}

\subsubsection*{Path Signatures}
Chen's iterated-integral signature first appeared in the 50s, was later used by Lyons in the development of \emph{rough path theory}, and ultimately inspired Hairer's work on regularity structures. 
In the last decade, it has been applied successfully in differential equations
\cite{salvi2021signature}, finance \cite{cuchiero2023signature}, probability \cite{bayer2023optimal}, machine learning tasks \cite{chevyrev2025primersignaturemethodmachine}, and many other areas. The signature of a path is a collection of tensors that characterizes the path up to a mild equivalence relation. At each truncation level, the signature lies in the free nilpotent Lie group. For example, at the second truncation level, the \emph{signature matrix} stores the \emph{signed areas} of a path.

\subsubsection*{Lie group barycenters}
The notion of a \emph{Lie group barycenter} goes back to Buser's and Karcher's original work \cite{bib:BK1981}. In several Lie groups, this barycenter is a \emph{geometric group mean}; see \cite{bib:PL2020} for a recent exposition. This covers for instance the mean in the group of rotations \cite{moakher2002means} or the recent \emph{barycenter of iterated-integrals signatures} \cite{clausel2024barycenterfreenilpotentlie} from rough path theory.  

\subsubsection*{Learning paths from signatures}
 A natural task is to \emph{learn} the preimage of the signature when considered as a map -- an issue addressed in \cite{bib:PSS2019} for third-level tensors and piecewise linear paths using tools from computational algebraic geometry. It is well known that the \emph{expected signature} \cite{bib:Ni2012} of a stochastic process (or random path) does not lie in the Lie group, and therefore no underlying path can be recovered from it. In contrast, the Lie group barycenter provides a different notion of expectation that remains within the group and makes it possible to associate a path.
 
\subsubsection*{Contributions}
Little is known about the geometry of signature barycenters. For this reason, we study path recovery of Lie group barycenters for signatures of piecewise linear paths. 
We show that the barycenter is an epimorphism of algebraic varieties that is represented by a non-commutative polynomial, \Cref{thm:bary_is_algebraic}. 
This produces novel closed-form expressions of barycenters that depend only on the truncation level, e.g., see  \Cref{closed_formula_bary_k2} at matrix truncation. We apply the theory of congruence orbits to compute the minimal number of segments needed for our barycenter recovery task at the level of matrices, \Cref{thm:main_matrix}. In order to support future investigation on higher tensor levels, we provide well-written code in the computer algebra system \texttt{OSCAR} \cite{OSCAR,OSCAR-book}, available at
\url{https://github.com/leonardSchmitz/barycenter-signature}.

\subsubsection*{Outline}

 \Cref{sec:FreeLieGroups,sec:nonCommutativePolys} introduce relevant aspects on free nilpotent Lie groups and theory on non-commutative symbolic computation. 
 \Cref{sec:barycenter} is devoted to barycenters. 
\Cref{sec:signatureTensors,sec:path_recovery} treat signature tensors and path recovery, respectively. 
   \Cref{sec:matrix_case} unites all previous sections in the case of signature matrices. \Cref{sec:computational_aspects_oscar} reports on the computational aspects of this work.

\section{Free nilpotent Lie groups}\label{sec:FreeLieGroups}
To introduce the free nilpotent Lie group, we fix a \emph{dimension} $d$, and a \emph{truncation level} $k$, and consider the \emph{truncated tensor algebra} 
\begin{equation}\label{eq:truncTensSeq}
T_{d,k}
:=
\bigoplus_{\ell=0}^k(\R^d)^{\otimes\ell}
=  
\R\boldsymbol{\oplus}\R^d\boldsymbol{\oplus}\R^{d\times d}\boldsymbol{\oplus} \dots\boldsymbol{\oplus}\R^{d\times \dots\times d}
\end{equation}
serving as our $(d^{k+1}-1)/(d-1)$-dimensional ambient space. This space is a direct sum of $\ell$-tensors on $\R^d$. It is isomorphic to the free associative algebra over $d$ \emph{letters} modulo the two-sided ideal spanned by all \emph{words} with \emph{length} longer than $k$, e.g., see \cite[Section 2.1]{clausel2024barycenterfreenilpotentlie}. Throughout, we use the notation $[d]:=\{1,\dots,d\}$. Our representation $T_{d,k}$ stores all $\R$-basis coefficients in a sequence of $k$-tensors. After bilinearity, the compatible multiplication of $\ell$-tensors and $n$-tensors with $\ell+n\leq k$ translates to 
\begin{align}\label{eq:tensorMult}(\R^d)^{\otimes \ell}\otimes(\R^d)^{\otimes n}&\rightarrow(\R^d)^{\otimes (\ell+n)},\;
    \mathbf{a}\otimes\mathbf{b}\mapsto\mathbf{c}
    \end{align}
    where $\mathbf{c}_{i_1,\dots,i_{\ell+n}}:=\mathbf{a}_{i_1,\dots,i_\ell}\cdot\mathbf{b}_{i_{\ell+1},\dots,i_{\ell+n}}$ for all $i\in[d]^{\ell+n}$. Whenever $\ell+n> k$, we set the product to $0$. This product equips $T_{d,k}$ with an $\R$-algebra structure. For tensors (and sequences of tensors) we use bold letters, that is, $
\mathbf{y}=\mathbf{y}^{(0)}\boldsymbol{\oplus}\dots\boldsymbol{\oplus}\mathbf{y}^{(k)}\in T_{d,k}$ with projection to $\ell$-tensor $\mathbf{y}^{(\ell)}\in(\R^d)^{\otimes\ell}$ for all $0\leq\ell\leq k$. As in every associative algebra, we obtain a \emph{Lie bracket} via the \emph{commutator} $[\mathbf{y},\mathbf{z}]:=\mathbf{y}\otimes\mathbf{z} -\mathbf{z}\otimes \mathbf{y}$ for $\mathbf{y},\mathbf{z}\in T_{d,k}$. 
\par 
The \emph{free nilpotent Lie algebra}  
$\mathfrak{g}_{d,k}$ is the smallest Lie subalgebra of $T_{d,k}$ that contains
the standard basis $\mathrm{e}_{1},\dots,\mathrm{e}_{d}\in\R^{d}$. 
When considered as an $\R$-vector space, its dimension is 
\begin{equation}\label{eq:dim}
\lambda_{d,k}:=
\dim(\LieAlg_{d,k})=\sum_{\ell=1}^ k\frac1\ell\sum_{a\mid \ell}\mu(a)d^{\frac{\ell}{a}},\end{equation}
where $\mu$ denotes the \emph{M\"obius function}. 
 In this work we consider $\mathfrak{g}_{d,k}$ as an affine algebraic variety according to \cite[Theorem 3.1]{bib:R1993}, see also \cite[Lemma 4.1]{bib:AFS2019}. The relations are characterized by the vanishings of  \emph{shuffle} linear forms. 

\begin{example}
    If $d=k=2$ then $[\mathrm{e}_1,\mathrm{e}_2]
   =\begin{bsmallmatrix}
        0 &1\\-1 &0
    \end{bsmallmatrix}$ so $\mathfrak{g}_{d,k}$ is a subspace of $T_{2,2}\cong\R^7$ with dimension $\lambda_{2,2}=3$. Let $s_\varepsilon,s_1,s_2,s_{11},s_{12},s_{21}$ and $s_{22}$ be the $7$ variables that generate the associated coordinate ring. Then 
    $$\mathfrak{g}_{d,k}
    =\Span(\mathrm{e}_1,\mathrm{e}_2,\begin{bsmallmatrix}
        0 &1\\-1 &0
    \end{bsmallmatrix})
    =\mathcal{V}(s_\varepsilon,s_{11},s_{22},s_{12}+s_{21})$$
    where $s_\varepsilon$ ensures that the constant component is zero. The remaining relations are according to the possible $3$ distinct shuffles of $2$ letters. 
\end{example}
Since we work in the free nilpotent Lie algebra, the \emph{exponential}
\begin{equation}\label{eq:def_exp}
  \exp: \mathfrak{g}_{d,k} \to T_{d,k} ,\;
          \mathbf{z}          \mapsto \sum_{\ell=0}^\infty \frac1{\ell!}\mathbf{z}^{\otimes \ell}
\end{equation}
is a non-commutative polynomial map, that is, the summation index $\ell$ in \eqref{eq:def_exp} in fact ranges from $0$ to the truncation level $k$. We postpone the precise formalization of this statement to \Cref{lem:exp_log_inv_is_polynomial}. 
\par 
The \emph{free nilpotent Lie group} is the image of the exponential, 
$$\mathcal{G}_{d,k}:=\exp(\mathfrak{g}_{d,k}).$$
As the name suggests, $\mathcal{G}_{d,k}$ is a Lie group realized by \eqref{eq:def_exp}. Its product is inherited by $T_{d,k}$. We consider it as an affine algebraic variety characterized by the \emph{shuffle relations}, see \cite[Theorem 3.2]{bib:R1993} or \cite[Lemma 4.2]{bib:AFS2019}.
\begin{example}\label{ex:HeisenbergGroup}
  If $d=k=2$ then 
  $$\mathcal{G}_{2,2}=\mathcal{V}(s_\varepsilon-1,s_{1}^2-2s_{11},s_{1}s_{2}-s_{12}-s_{21},s_{2}^2-2s_{22})$$ is generated by $3$ shuffle relations as well as  $s_\varepsilon-1$ to ensure that all constant components are $1$. Considered as a group, $\mathcal{G}_{2,2}$ is isomorphic to the \emph{Heisenberg group}, see \cite[Exercise~9.17]{Friz_Victoir_2010}.
\end{example}
For the whole article, we restrict the exponential to its image,  $\exp: \mathfrak{g}_{d,k} \to {\mathcal{G}}_{d,k}$. 
Then, it is invertible with the inverse given by the \emph{logarithm},
\begin{align}\label{eq:def_log}
  \log: {\mathcal{G}}_{d,k} &\to \mathfrak{g}_{d,k},\;
    \mathbf{s} \mapsto \sum_{\ell=1}^\infty \frac{(-1)^{\ell+1}}\ell  {(\mathbf{s} - \ec)}^{\otimes \ell}.
\end{align}
In the following section, we see that both the exponential \eqref{eq:def_exp} and the logarithm \eqref{eq:def_log} are isomorphisms of affine algebraic varieties, that is $\mathfrak{g}_{d,k}\cong\mathcal{G}_{d,k}$.

\section{Non-commutative polynomials}\label{sec:nonCommutativePolys}

For any truncation level $k$ let     \begin{equation}\label{eq:univariate_free_algeba_Rs}\R\langle s\rangle:=\R\langle s^{(1)},\dots,s^{(k)}\rangle\end{equation} denote the free associative algebra over the $k$ symbols $s^{(i)}$, encoding the different levels $\mathbf{z}^{(i)}$ of a tensor sequence $\mathbf{z}=\mathbf{z}^{(0)}\boldsymbol{\oplus}\dots\boldsymbol{\oplus}\mathbf{z}^{(k)}\in T_{d,k}$.  

    In this algebra, we use the \emph{level grading}
    defined by the universal property of monoids and $\level(s^{(i)}):=i$ for $1\leq i\leq k$. 
    The evaluation of a (non-commutative) polynomial $f\in\R\langle s\rangle$ is a morphism of algebras, 
    \begin{align}\label{eq:evalNonComPolyTensors}\eval_{f}: T_{d,k}&\rightarrow T_{d,k} 
    \end{align}
defined via the universal property and $\eval_{s^{(i)}}(\mathbf{z}):=\mathbf{z}^{(i)}$, i.e., we replace all $s^{(i)}$ in $f$ by $\mathbf{z}^{(i)}$ and obtain a polynomial term in $T_{d,k}$. We avoid the notation $f(\mathbf{z})$ and instead write $\eval_f(\mathbf{z})$ to distinguish from the classical evaluation of commutative polynomials.  
\begin{lemma}\label{lem:non-com-poly-implies-poly-map}
    For every $f\in\R\langle s\rangle$, its evaluation $\eval_f$ is a morphism of varieties.
\end{lemma}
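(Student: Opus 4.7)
The plan is to reduce the claim to the observation that the ambient space $T_{d,k}$ is just an affine space $\R^{N}$ with $N=(d^{k+1}-1)/(d-1)$, so that a map $T_{d,k}\to T_{d,k}$ is a morphism of varieties if and only if each output coordinate is a polynomial in the input coordinates. Thus it suffices to show that $\eval_f$ is given by such polynomials for every $f\in\R\langle s\rangle$.

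First I would expand $f$ as a finite $\R$-linear combination of monomials
$m = s^{(i_1)}s^{(i_2)}\cdots s^{(i_r)}$ in the generators $s^{(1)},\dots,s^{(k)}$. Because the map $\eval$ in \eqref{eq:evalNonComPolyTensors} is defined as an $\R$-algebra morphism via the universal property of $\R\langle s\rangle$, we have
\[
\eval_f(\mathbf{z}) \;=\; \sum_m c_m\,\eval_m(\mathbf{z})\;=\;\sum_m c_m\,\mathbf{z}^{(i_1)}\otimes\mathbf{z}^{(i_2)}\otimes\dots\otimes\mathbf{z}^{(i_r)},
\]
with only finitely many nonzero summands. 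Sums and scalar multiples of morphisms of varieties are morphisms of varieties, so the problem reduces to a single monomial.

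Next I would check polynomiality of $\eval_m$ by decomposing it into pieces that are manifestly polynomial. For each $1\leq i\leq k$, the projection $\pi_i:T_{d,k}\to T_{d,k}$, $\mathbf{z}\mapsto \mathbf{z}^{(i)}$, is a linear (hence polynomial) map when both source and target are identified with $\R^{N}$. The truncated tensor multiplication $T_{d,k}\times T_{d,k}\to T_{d,k}$ from \eqref{eq:tensorMult} is bilinear, since its output coordinates are either zero (in degrees exceeding $k$, or in degrees not matching $\ell+n$) or products of two coordinate functions of the factors, as in $\mathbf{c}_{i_1,\dots,i_{\ell+n}}=\mathbf{a}_{i_1,\dots,i_\ell}\cdot\mathbf{b}_{i_{\ell+1},\dots,i_{\ell+n}}$. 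Composing the bilinear multiplication with the projections yields
\[
\eval_m(\mathbf{z})\;=\;\pi_{i_1}(\mathbf{z})\otimes\pi_{i_2}(\mathbf{z})\otimes\dots\otimes\pi_{i_r}(\mathbf{z}),
\]
which is a polynomial map from $T_{d,k}$ to itself, since compositions and products of polynomial maps are polynomial.

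Putting the two steps together, $\eval_f$ is a finite $\R$-linear combination of polynomial maps, hence a polynomial map, hence a morphism of affine varieties. There is no real obstacle here; the only mild care needed is ensuring that the sum is finite and that the grading forces $\eval_m(\mathbf{z})=0$ whenever $i_1+\dots+i_r>k$, which is automatic from the truncation convention in \eqref{eq:tensorMult}. The argument uses nothing beyond the universal property defining $\eval_f$ and the fact that coordinate projections together with truncated tensor multiplication are polynomial operations on $T_{d,k}\cong\R^{N}$.
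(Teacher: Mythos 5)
Your proposal is correct and follows essentially the same route as the paper's proof: both reduce the claim to exhibiting the $(d^{k+1}-1)/(d-1)$ coordinate functions of $\eval_f$ as commutative polynomials, handling the generators $s^{(i)}$ via (linear) projections, sums via sums of coordinate functions, and products via the bilinear truncated multiplication \eqref{eq:tensorMult}. Your explicit monomial expansion is just a slightly more spelled-out version of the paper's inductive phrasing.
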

\begin{proof}It suffices to construct $(d^{k+1}-1)/(d-1)$ coordinate functions that define a (commutative) polynomial map $g$ on \eqref{eq:truncTensSeq} such that $g=\eval_f$. For variables $f=s^{(i)}$ we can choose those to be either identities or zeros. Addition in $\R\langle s\rangle$ translates to addition of coordinate functions and with \eqref{eq:tensorMult} we obtain coordinate functions for products in $\R\langle s\rangle$. 
\end{proof}
\begin{example}\label{ex:2nd_graded_comp_of_exp}
Let $d=k=2$ and $f=s^{(1)}+s^{(2)}+\frac12s^{(1)}s^{(1)}\in\R\langle s\rangle$, then  \begin{align*}\eval_f(\mathbf{z})
&=
\mathbf{z}^{(1)}
\boldsymbol{\oplus}
\left(\mathbf{z}^{(2)}+\frac12{\mathbf{z}^{(1)}}\otimes{\mathbf{z}^{(1)}}\right)=0 \boldsymbol{\oplus}\begin{bmatrix}
            \mathbf{z}_{{1}}^{(1)}\\
            \mathbf{z}_{{2}}^{(1)}
        \end{bmatrix}
        \boldsymbol{\oplus}
        \begin{bmatrix}
            \mathbf{z}_{{1,1}}^{(2)}+\frac12\mathbf{z}_{{1}}^{(1)}\mathbf{z}_{{1}}^{(1)} &
            \mathbf{z}_{{1,2}}^{(2)}+\frac12\mathbf{z}_{{1}}^{(1)}\mathbf{z}_{{2}}^{(1)}\\
            \mathbf{z}_{{2,1}}^{(2)}+\frac12\mathbf{z}_{{2}}^{(1)}\mathbf{z}_{{1}}^{(1)} 
            &
            \mathbf{z}_{{2,2}}^{(2)}+\frac12\mathbf{z}_{{2}}^{(1)}\mathbf{z}_{{2}}^{(1)}
        \end{bmatrix}
    \end{align*}
    is evaluated at $\mathbf{z}=\mathbf{z}^{(0)}\boldsymbol{\oplus}\mathbf{z}^{(1)}\boldsymbol{\oplus}\mathbf{z}^{(2)}\in T_{2,2}$, from which its $7$ coordinate functions can be read. 
\end{example}
We say that a (non-commutative) polynomial map is \emph{represented} by $f$ if it is equal to $\eval_f$.
\begin{lemma}\label{lem:expLogMulLinIn_ck}
Let $k\in \N$ and $R=\R\langle s^{(1)},\dots,s^{(k-1)}\rangle$. The exponential $\exp$ and the logarithm $\log$ are represented by elements of $s^{(k)}+R$, while group inversion  $(\cdot)^{-1}:\mathcal{G}_{d,k}\rightarrow\mathcal{G}_{d,k}$ is represented by an element of $-s^{(k)}+R$.
\end{lemma}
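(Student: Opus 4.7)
The plan is to introduce the shorthand $s := s^{(1)}+\dots+s^{(k)} \in \R\langle s\rangle$, represent each of the three maps as a finite truncation of the appropriate power series $\sum_{\ell} c_\ell\, s^{\ell}$, and then isolate the $s^{(k)}$-linear contribution by means of the level grading.

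The first (and only really load-bearing) step is the following observation: by \Cref{lem:non-com-poly-implies-poly-map} together with the product rule \eqref{eq:tensorMult}, any monomial $m=s^{(i_1)}\cdots s^{(i_\ell)}$ satisfies $\eval_m(\mathbf{z}) \in (\R^d)^{\otimes \level(m)}$, which is zero in $T_{d,k}$ as soon as $\level(m) > k$. Applied to a monomial that contains $s^{(k)}$ together with at least one other factor $s^{(j)}$ (so $j\geq 1$), this forces $\level\geq k+1$. In short, the only $s^{(k)}$-containing monomial that can contribute non-trivially under $\eval$ is the bare variable $s^{(k)}$ itself.

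Next I would write down the natural representatives. For the exponential, $\eval_s(\mathbf{z}) = \mathbf{z}$ on $\LieAlg_{d,k}$ (since $\mathbf{z}^{(0)}=0$), and $s^{\ell}$ evaluates to zero for $\ell > k$ (all its monomials have level at least $\ell$), so $f_{\exp} := \sum_{\ell=0}^k \tfrac{1}{\ell!}\, s^{\ell}$ represents $\exp$. For the logarithm, applying the same reasoning to $\mathbf{s}-\ec$ (whose $0$-component vanishes on $\mathcal{G}_{d,k}$) yields $f_{\log} := \sum_{\ell=1}^k \tfrac{(-1)^{\ell+1}}{\ell}\, s^{\ell}$. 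For inversion I would write $\mathbf{s}^{-1} = (\ec+(\mathbf{s}-\ec))^{-1}$ as a Neumann series, which again terminates at $\ell=k$ for the same level reason, giving $f_{\mathrm{inv}} := \sum_{\ell=0}^k (-1)^\ell\, s^{\ell}$.

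Finally I would expand each $s^{\ell}=(s^{(1)}+\dots+s^{(k)})^\ell$ and split every representative into three pieces: (i) the bare monomial $s^{(k)}$ that appears from $\ell=1$, with coefficient $+1$, $+1$, $-1$ in the three cases respectively; (ii) monomials built only from $s^{(1)},\dots,s^{(k-1)}$, which lie in $R$; and (iii) all other monomials, which contain $s^{(k)}$ together with at least one further factor and hence evaluate to zero by the first step. Discarding the monomials of type (iii) produces polynomial representatives of the required forms $s^{(k)} + r_{\exp}$, $s^{(k)} + r_{\log}$, and $-s^{(k)} + r_{\mathrm{inv}}$ with each remainder in $R$. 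I do not expect a serious obstacle: the argument is essentially bookkeeping built on the level/truncation observation, whose only mildly delicate consequence is the simultaneous termination of the three series.
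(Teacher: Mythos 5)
Your proposal is correct and follows essentially the same route as the paper: the paper's (very terse) proof likewise invokes the level grading to truncate the power series for $\exp$, $\log$, and the Neumann series for inversion, and your observation that any monomial containing $s^{(k)}$ together with a further factor has level exceeding $k$ (hence evaluates to zero) is exactly the grading argument being alluded to. You have simply filled in the bookkeeping that the paper leaves implicit.
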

\begin{proof}
    We use the level grading and non-commutative polynomials representing the exponential \eqref{eq:def_exp} and the logarithm  \eqref{eq:def_log}. The group inversion is represented by a non-commutative polynomial since every formal power series with a nonzero constant component is invertible.  
\end{proof}

\begin{example}
  In cubic truncation $k=3$, we have
    $$\mathbf{z}^{-1}
    =
    1 \boldsymbol{\oplus}
    \left(-\mathbf{z}^{(1)}\right)
     \boldsymbol{\oplus}
    \left(-\mathbf{z}^{(2)}+
    \mathbf{z}^{(1)}\otimes \mathbf{z}^{(1)}
    \right)
     \boldsymbol{\oplus}
    \left(-\mathbf{z}^{(3)}
    +\mathbf{z}^{(2)}\otimes \mathbf{z}^{(1)}
    +\mathbf{z}^{(1)}\otimes \mathbf{z}^{(2)}
    -\mathbf{z}^{(1)}\otimes \mathbf{z}^{(1)}\otimes \mathbf{z}^{(1)}
    \right)$$
    for every $\mathbf{z}\in\mathcal{G}_{d,3}$ so the group inverse is represented by a non-commutative polynomial of $8$ terms. 
\end{example}
\begin{corollary}\label{lem:exp_log_inv_is_polynomial}
The exponential, the logarithm and the group inversion in $\mathcal{G}_{d,k}$ are isomorphisms of varieties. 
\end{corollary}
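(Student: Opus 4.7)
The plan is to combine the two preceding lemmas directly. By \Cref{lem:expLogMulLinIn_ck}, each of the three maps $\exp$, $\log$, and group inversion $(\cdot)^{-1}$ is represented by a non-commutative polynomial in $\R\langle s\rangle$. Then \Cref{lem:non-com-poly-implies-poly-map} immediately turns each of these representations into a morphism of affine algebraic varieties on the ambient space $T_{d,k}$, and restricting to the source $\mathfrak{g}_{d,k}$ or $\mathcal{G}_{d,k}$ preserves this property.

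Next I would argue bijectivity together with polynomial inverses. The discussion preceding \Cref{eq:def_log} already establishes that $\exp\colon\mathfrak{g}_{d,k}\to\mathcal{G}_{d,k}$ and $\log\colon\mathcal{G}_{d,k}\to\mathfrak{g}_{d,k}$ are mutually inverse as set-theoretic maps. Since both $\exp$ and $\log$ are polynomial, each is an isomorphism of varieties whose inverse is also regular. For the group inversion $(\cdot)^{-1}\colon\mathcal{G}_{d,k}\to\mathcal{G}_{d,k}$, the inverse as a map of sets is itself, an involution, and \Cref{lem:expLogMulLinIn_ck} provides the polynomial representation. Hence its own inverse is regular, and it too is an isomorphism of varieties.

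The main thing to check with a bit of care is that the polynomial maps, which \emph{a priori} are defined on the whole ambient algebra $T_{d,k}$ via \Cref{lem:non-com-poly-implies-poly-map}, genuinely restrict to morphisms between the correct subvarieties $\mathfrak{g}_{d,k}$ and $\mathcal{G}_{d,k}$. This follows because $\mathfrak{g}_{d,k}$ and $\mathcal{G}_{d,k}$ are cut out by shuffle relations and because the definitions \eqref{eq:def_exp}, \eqref{eq:def_log} and the inversion series map these subvarieties into one another as already discussed in \Cref{sec:FreeLieGroups}.

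There is no substantial obstacle here: the corollary is essentially a packaging of \Cref{lem:non-com-poly-implies-poly-map} and \Cref{lem:expLogMulLinIn_ck}, combined with the well-known fact that $\exp$ and $\log$ are mutually inverse on the free nilpotent Lie group and that group inversion is an involution. The only mild point is keeping straight the distinction between being a polynomial map on the ambient $T_{d,k}$ and being a morphism between the specific subvarieties in question, which is handled by restriction.
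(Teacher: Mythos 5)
Your proposal is correct and follows exactly the paper's route: the corollary is deduced by combining \Cref{lem:non-com-poly-implies-poly-map} with \Cref{lem:expLogMulLinIn_ck}, together with the standard facts that $\exp$ and $\log$ are mutual inverses and inversion is an involution. The extra care you take about restricting the ambient polynomial maps to the subvarieties is a reasonable elaboration of what the paper leaves implicit.
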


This is an immediate consequence of \Cref{lem:non-com-poly-implies-poly-map} and \Cref{lem:expLogMulLinIn_ck}. 
Further on (see for instance \Cref{lem:prod_is_polynomial} and  \Cref{lem:baryIsNonComPolyMap}), we are handling $N$ tensor sequences at the same time. For this, we generalize \eqref{eq:univariate_free_algeba_Rs} to the \emph{multivariate setting}, 
\begin{equation}\label{eq:multivariate_free_algeba_Rs}\R\langle s_1,\dots,s_N\rangle:=\R\langle s^{(j)}_{i}\mid i\in[N],j\in[ k]\rangle,\end{equation}
with $\level(s_j^{(i)}):=i$ and an analogous \emph{evaluation} $\eval_q:T_{d,k}^N\rightarrow T_{d,k}$
for any $q\in\R\langle s_1,\dots,s_N\rangle$. 

\begin{lemma}\label{lem:MulLinIn_ck}
The group multiplication $\mathcal{G}_{d,k}\times\mathcal{G}_{d,k}\rightarrow\mathcal{G}_{d,k},(\mathbf{z}_1,\mathbf{z}_2)\mapsto {\mathbf{z}_1}\cdot{\mathbf{z}_2}$ is represented by an element of 
$s_1^{(k)}+s_2^{(k)}+\R\langle s_1^{(1)},\dots,s_1^{(k-1)},s_2^{(1)},\dots,s_2^{(k-1)}\rangle$. 
\end{lemma}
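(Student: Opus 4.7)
The plan is to explicitly construct a non-commutative polynomial $q\in\R\langle s_1,s_2\rangle$ representing the multiplication and then read off its structure with respect to the level grading. The underlying observation is that elements of $\mathcal{G}_{d,k}$ have constant component equal to $1$, so the product formula \eqref{eq:tensorMult} can be rewritten without any use of a \emph{level-0 symbol} (which is absent from $\R\langle s_1,s_2\rangle$).

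Concretely, I would start from the bilinear extension of \eqref{eq:tensorMult}: for $\mathbf{z}_1,\mathbf{z}_2\in\mathcal{G}_{d,k}$ with $\mathbf{z}_i^{(0)}=1$, one has
\[
(\mathbf{z}_1\cdot\mathbf{z}_2)^{(m)}
= \mathbf{z}_1^{(m)}+\mathbf{z}_2^{(m)}+\sum_{\substack{\ell+n=m\\ 1\leq \ell,n}}\mathbf{z}_1^{(\ell)}\otimes\mathbf{z}_2^{(n)}
\qquad(0\leq m\leq k),
\]
with the convention that the $m=0$ sum equals $1$. I would then define
\[
q \;:=\; 1 + \sum_{m=1}^{k}\bigl(s_1^{(m)}+s_2^{(m)}\bigr) + \sum_{m=2}^{k}\sum_{\substack{\ell+n=m\\ 1\leq \ell,n\leq k-1}} s_1^{(\ell)}\,s_2^{(n)} \;\in\; \R\langle s_1,s_2\rangle.
\]
By construction, the monomials of total level $m$ in $q$ evaluate, via $\eval_{s_i^{(j)}}(\mathbf{z}_i)=\mathbf{z}_i^{(j)}$ and the tensor multiplication \eqref{eq:tensorMult}, precisely to $(\mathbf{z}_1\cdot\mathbf{z}_2)^{(m)}$, so $\eval_q(\mathbf{z}_1,\mathbf{z}_2)=\mathbf{z}_1\cdot\mathbf{z}_2$ on $\mathcal{G}_{d,k}\times\mathcal{G}_{d,k}$.

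Finally, I would inspect the symbols $s_i^{(k)}$ appearing in $q$. In the second sum, a factor $s_1^{(k)}$ or $s_2^{(k)}$ would force the total level to exceed $k$ (because the other factor has level $\geq 1$), so it cannot occur. Thus the only occurrences of $s_1^{(k)}, s_2^{(k)}$ in $q$ are the two linear terms from the first sum, which yields the claimed decomposition
\[
q \in s_1^{(k)}+s_2^{(k)}+\R\langle s_1^{(1)},\dots,s_1^{(k-1)},s_2^{(1)},\dots,s_2^{(k-1)}\rangle.
\]

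There is no real obstacle here; the argument is a direct bookkeeping exercise in the level grading, in the spirit of \Cref{lem:expLogMulLinIn_ck}. The only point requiring a small amount of care is the handling of the level-0 component: since no symbol $s_i^{(0)}$ exists in $\R\langle s_1,s_2\rangle$, one must exploit that $\mathbf{z}_i^{(0)}=1$ on $\mathcal{G}_{d,k}$ to fold the mixed terms $\mathbf{z}_1^{(0)}\otimes\mathbf{z}_2^{(m)}$ and $\mathbf{z}_1^{(m)}\otimes\mathbf{z}_2^{(0)}$ into the linear pieces $s_2^{(m)}$ and $s_1^{(m)}$, and the pure level-0 product into the constant $1\in\R\langle s_1,s_2\rangle$.
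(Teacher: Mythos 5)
Your construction is correct: the explicit polynomial $q = 1 + \sum_{m=1}^{k}(s_1^{(m)}+s_2^{(m)}) + \sum_{\ell+n\leq k,\,\ell,n\geq 1} s_1^{(\ell)} s_2^{(n)}$ is exactly the truncated tensor product rewritten using $\mathbf{z}_i^{(0)}=1$ on $\mathcal{G}_{d,k}$, and the level-grading bookkeeping showing that $s_1^{(k)},s_2^{(k)}$ occur only linearly is sound (your $q$ specializes for $k=3$ to the ten-term polynomial in the paper's example). The paper states this lemma without proof, and your argument is precisely the direct expansion it implicitly relies on.
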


\begin{example}For all $\mathbf{z}_1,\mathbf{z}_2\in\mathcal{G}_{d,3}$ its product is represented by a polynomial of $10$ terms,
$$\mathbf{z}_1\cdot\mathbf{z}_2
=1
 \boldsymbol{\oplus}
\left(
\mathbf{z}^{(1)}_1+\mathbf{z}^{(1)}_2\right)
 \boldsymbol{\oplus}
\left(
\mathbf{z}^{(2)}_1+\mathbf{z}^{(2)}_2+\mathbf{z}^{(1)}_1\otimes\mathbf{z}^{(1)}_2
\right)
 \boldsymbol{\oplus}
\left(
\mathbf{z}^{(3)}_1+\mathbf{z}^{(3)}_2+\mathbf{z}^{(2)}_1\otimes\mathbf{z}^{(1)}_2
+\mathbf{z}^{(1)}_1\otimes\mathbf{z}^{(2)}_2
\right).
$$
\end{example}

\begin{corollary}\label{lem:prod_is_polynomial}
The group multiplication in $\mathcal{G}_{d,k}$ is an epimorphism of varieties. 
\end{corollary}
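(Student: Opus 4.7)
My plan is to deduce this corollary as an immediate two-step consequence of the preceding results. First, I would apply \Cref{lem:MulLinIn_ck} to obtain an explicit non-commutative polynomial $q \in \R\langle s_1, s_2\rangle$ whose evaluation represents group multiplication. Then I would invoke the multivariate analogue of \Cref{lem:non-com-poly-implies-poly-map}: the evaluation map $\eval_q : T_{d,k}^2 \to T_{d,k}$ is built from $q$ by exactly the same coordinate-wise recipe used in the univariate case (identities or zeros for the variables, coordinate-wise sums, and \eqref{eq:tensorMult} for products), so it defines a morphism of affine varieties. Restricting to $\mathcal{G}_{d,k} \times \mathcal{G}_{d,k}$ and observing that the image lies in $\mathcal{G}_{d,k}$ (because $\mathcal{G}_{d,k}$ is closed under the operation it carries as a group), yields the desired morphism of varieties.

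For the "epi" part, I would argue surjectivity directly from the group structure: every $\mathbf{z} \in \mathcal{G}_{d,k}$ admits the trivial factorization $\mathbf{z} = \mathbf{z} \cdot \mathbf{1}$, where $\mathbf{1} = \exp(0)$ is the multiplicative identity. Since $(\mathbf{z}, \mathbf{1}) \in \mathcal{G}_{d,k} \times \mathcal{G}_{d,k}$, the multiplication map is set-theoretically surjective, and combined with the morphism property this gives an epimorphism of varieties.

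There is no real obstacle: the substantive work was already done in \Cref{lem:MulLinIn_ck} (writing multiplication as a non-commutative polynomial using the Baker–Campbell–Hausdorff-type truncation) and in \Cref{lem:non-com-poly-implies-poly-map} (passing from polynomial representations to morphisms of varieties). The only point requiring any care is that \Cref{lem:non-com-poly-implies-poly-map} is stated for the univariate algebra $\R\langle s\rangle$, while we need it for $\R\langle s_1, s_2\rangle$; but its proof carries over verbatim, since the construction of coordinate functions for sums and products relies only on bilinearity of the tensor product \eqref{eq:tensorMult} and not on the number of formal variables involved.
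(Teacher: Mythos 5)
Your proposal is correct and matches the paper's intended argument: the corollary is stated there as an immediate consequence of \Cref{lem:MulLinIn_ck} together with the (multivariate analogue of) \Cref{lem:non-com-poly-implies-poly-map}, exactly as you argue, with surjectivity coming from the group structure. Your explicit remark that the univariate \Cref{lem:non-com-poly-implies-poly-map} carries over verbatim to $\R\langle s_1,s_2\rangle$, and the factorization $\mathbf{z}=\mathbf{z}\cdot 1$ for surjectivity, are precisely the (unstated) details the paper relies on.
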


We recall the important notion of a \emph{congruence transform} on tensors, e.g., compare \cite[Section~3]{bib:PSS2019}.  
Let $\mathbf{c}\in(\R^{m})^{\otimes k}$
be a \emph{core tensor} and $A\in\R^{d\times m}$
a \emph{linear transform}. Then, the \emph{matrix-tensor congruence transform} $A\cdot\mathbf{c}\in(\R^{d})^{\otimes k}$ is a $k$-tensor with entries
\begin{equation}\label{eq:def_tensorMatrixCongruence}\left(A\cdot\mathbf{c}\right)_{w_1,\dots,w_k}:=\sum_{i_1=1}^m\dots\sum_{i_k=1}^mA_{w_1,i_1}\dots A_{w_k,i_k}\mathbf{c}_{i_1,\dots,i_k}\end{equation}
for every $ w_1,\dots,w_k\in[ d]$. Whenever $m$ is significantly smaller than $d$, the factorization $A\cdot\mathbf{c}$ is known as the \emph{Tucker format}. The matrix-tensor congruence naturally extends to sequences of tensors by action on its graded components with respect to the level, that is
$\R^{d\times m}\times T_{m,k}\rightarrow T_{d,k}$
with $(A\cdot\mathbf{z})^{(j)}:=A\cdot\mathbf{z}^{(j)}$ for every $\mathbf{z}\in T_{m,k}$ and $ j\in[k]$. 

\begin{lemma}\label{lem:evalEquivariant} 
The evaluation is equivariant under congruence transforms, i.e., $$A\cdot\eval_f(\mathbf{z})=\eval_f(A\cdot\mathbf{z})$$
for every $f\in\R\langle s_1,\dots,s_N\rangle$, $\mathbf{z}\in T_{m,k}^N$ and $A\in \R^{d\times m}$. 
\end{lemma}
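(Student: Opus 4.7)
The plan is to proceed by induction on the (term) complexity of $f$, exploiting the fact that the evaluation map $\eval: \R\langle s_1,\dots,s_N\rangle \to \mathrm{Hom}(T_{m,k}^N, T_{m,k})$ is constructed via the universal property of the free associative algebra: it suffices to verify equivariance on a set of generators and to check closure under the defining operations (scalar multiplication, addition, and tensor product). Both sides of the claimed identity, as functions of $f$, are $\R$-linear in $f$ for fixed $\mathbf{z}$ and $A$, so reducing to the multiplicative generators is legitimate.

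First I would dispose of the base case. For a generator $s_i^{(j)}$ we have $\eval_{s_i^{(j)}}(\mathbf{z}) = \mathbf{z}_i^{(j)}$ by definition, so both $A\cdot \eval_{s_i^{(j)}}(\mathbf{z})$ and $\eval_{s_i^{(j)}}(A\cdot \mathbf{z})$ equal $A \cdot \mathbf{z}_i^{(j)}$, where the action on the right uses the componentwise extension of \eqref{eq:def_tensorMatrixCongruence} to $T_{m,k}^N$. The additive step is equally immediate since $A\cdot(-)$ is linear on each graded component and $\eval$ is additive in $f$.

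The only step requiring actual computation is multiplicativity: given homogeneous $\mathbf{a}\in(\R^m)^{\otimes \ell}$ and $\mathbf{b}\in(\R^m)^{\otimes n}$, show
\[
A\cdot(\mathbf{a}\otimes \mathbf{b}) \;=\; (A\cdot\mathbf{a})\otimes(A\cdot\mathbf{b}).
\]
When $\ell+n>k$ both sides vanish by the truncation convention following \eqref{eq:tensorMult}, so we may assume $\ell+n\le k$. Unfolding \eqref{eq:def_tensorMatrixCongruence} on the left-hand side and substituting $(\mathbf{a}\otimes\mathbf{b})_{i_1,\dots,i_{\ell+n}} = \mathbf{a}_{i_1,\dots,i_\ell}\,\mathbf{b}_{i_{\ell+1},\dots,i_{\ell+n}}$ from \eqref{eq:tensorMult}, the $\ell+n$ nested sums factor into the product of two independent sums over $i_1,\dots,i_\ell$ and $i_{\ell+1},\dots,i_{\ell+n}$, respectively, which are exactly the entries of $A\cdot \mathbf{a}$ and $A\cdot\mathbf{b}$ at indices $(w_1,\dots,w_\ell)$ and $(w_{\ell+1},\dots,w_{\ell+n})$. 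This matches $((A\cdot\mathbf{a})\otimes(A\cdot\mathbf{b}))_{w_1,\dots,w_{\ell+n}}$ by another application of \eqref{eq:tensorMult}.

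With the three closure properties verified, a straightforward induction on the number of monomials and on the length of each monomial gives the lemma for arbitrary $f\in\R\langle s_1,\dots,s_N\rangle$. The main obstacle is really only the multiplicativity step; conceptually it is the statement that the congruence action by $A$ on the total tensor algebra $T_{m,k}$ is an algebra homomorphism, which is the substantive content of the lemma, while the rest is formal bookkeeping on the free algebra.
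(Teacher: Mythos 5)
Your proposal is correct and follows essentially the same route as the paper's proof: reduce by linearity to monomials and then use the fact that the congruence action of $A$ is multiplicative with respect to the tensor product, i.e.\ $A\cdot(\mathbf{a}\otimes\mathbf{b})=(A\cdot\mathbf{a})\otimes(A\cdot\mathbf{b})$ (with both sides vanishing under truncation). The paper states this multiplicativity as a one-line identity on monomials, whereas you verify it by factoring the nested sums in \eqref{eq:def_tensorMatrixCongruence}; the substance is identical.
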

\begin{proof}
    Congruence and evaluation are both linear, so it suffices to consider evaluations of monomials,  \begin{align*}
    A\cdot\eval_{s_{j_1}^{(i_1)}\dots s_{j_t}^{(i_t)}}(\mathbf{z})
    &=\left(A^{\otimes i_1}\otimes \dots \otimes A^{\otimes i_t}\right)\left(\mathbf{z}_{j_1}^{(i_1)}\otimes \dots \otimes \mathbf{z}_{j_t}^{(i_t)}\right)\\
    &=\left(A\cdot\mathbf{z}_{j_1}^{(i_1)}\right)\otimes\dots\otimes \left(A\cdot\mathbf{z}_{j_t}^{(i_t)}\right)
    =
\eval_{s_{j_1}^{(i_1)}\dots s_{j_t}^{(i_t)}}(A\cdot\mathbf{z})
    \end{align*}
for every $ i_1+\dots+i_t\leq k$ and $j_1,\dots,j_t\in[N]$. 
\end{proof}

\section{Barycenters}\label{sec:barycenter}
We recall the notion of a Lie group barycenter according to \cite[Definition~8.1.4]{bib:BK1981}. A more recent reference is \cite[Definition 11]{bib:PL2020}. 
\begin{definition}[Buser and Karcher]\label{def:groupmean}
  Let $\mathbf{x}_1,\dots,\mathbf{x}_N$ be elements of a Lie group with a globally defined logarithm. 
We say that a group element $\mathbf{m}$  is a \emph{barycenter}
of the \emph{sample} $\mathbf{x}=(\mathbf{x}_1,\dots,\mathbf{x}_N)$ if the following equation holds in the associated Lie algebra: 
  \begin{align}
  \label{eq:barycenterCondition}
    \sum_{i=1}^N \log( \mathbf{m}^{-1} \cdot\mathbf{x}_i ) = 0.
  \end{align}
\end{definition}

\begin{theorem}[{\cite[Theorem 10]{clausel2024barycenterfreenilpotentlie}}]\label{thm:groupmean}
    In the free nilpotent Lie group $\mathcal{G}_{d,k}$ the barycenter exists and is unique. In other words, we have a map  
\begin{align*}\mathsf{bary}:\mathcal{G}_{d,k}^N&\rightarrow\mathcal{G}_{d,k}\\\mathbf{x}&\mapsto\mathbf{m}
\end{align*}
where $\mathbf{m}$ is the unique solution of \eqref{eq:barycenterCondition} with respect to our group sample  $\mathbf{x}\in\mathcal{G}_{d,k}^N$. 
\end{theorem}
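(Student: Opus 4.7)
The plan is to parameterize $\mathbf{m}=\exp(\mathbf{z})$ with $\mathbf{z}\in\mathfrak{g}_{d,k}$ via the isomorphism of \Cref{lem:exp_log_inv_is_polynomial} and induct on the truncation level $k$. The observation driving the induction is that \Cref{lem:expLogMulLinIn_ck} and \Cref{lem:MulLinIn_ck} make each of $\exp$, $\log$, group inversion, and group multiplication affine in its top-level slice modulo the lower-level slices; this lets us solve the barycenter equation \eqref{eq:barycenterCondition} one grading level at a time.

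For $k=1$, the group $\mathcal{G}_{d,1}$ is isomorphic to $(\R^d,+)$, both $\exp$ and $\log$ are the identity on the level-$1$ slice, and \eqref{eq:barycenterCondition} collapses to $\sum_i(\mathbf{x}_i-\mathbf{m})=0$, uniquely solved by the arithmetic mean. For the inductive step, the truncation $\pi_{k-1}\colon\mathcal{G}_{d,k}\to\mathcal{G}_{d,k-1}$ is a polynomial map that intertwines $\log$, inversion, and multiplication (each being represented by tensor polynomials that respect the grading), so pushing \eqref{eq:barycenterCondition} through $\pi_{k-1}$ reproduces the barycenter condition at level $k-1$. By induction this uniquely determines $\pi_{k-1}(\mathbf{m})$, equivalently the slices $\mathbf{z}^{(1)},\dots,\mathbf{z}^{(k-1)}$. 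Chaining the ``$s^{(k)}+\text{lower}$'' structure from \Cref{lem:expLogMulLinIn_ck,lem:MulLinIn_ck}, the top-level slice of $\log(\mathbf{m}^{-1}\mathbf{x}_i)$ takes the form $-\mathbf{z}^{(k)}+\mathbf{x}_i^{(k)}+R_i$, where $R_i$ is a polynomial in the already-fixed lower-level data. Summing over $i$ yields the affine equation
\[
0 \;=\; -N\,\mathbf{z}^{(k)} + \sum_{i=1}^N \mathbf{x}_i^{(k)} + T\bigl(\mathbf{z}^{(1)},\dots,\mathbf{z}^{(k-1)},\mathbf{x}_1,\dots,\mathbf{x}_N\bigr)
\]
in $(\R^d)^{\otimes k}$, whose coefficient $-N$ is invertible; this determines $\mathbf{z}^{(k)}$ uniquely.

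The one delicate step I foresee is verifying that the resulting $\mathbf{z}^{(k)}$ actually lies in the top-level slice $\mathfrak{g}_{d,k}\cap(\R^d)^{\otimes k}$ of the Lie algebra, so that $\mathbf{m}=\exp(\mathbf{z})$ belongs to $\mathcal{G}_{d,k}$. I would argue this by noting that each summand $\log(\mathbf{m}^{-1}\mathbf{x}_i)$ genuinely lies in $\mathfrak{g}_{d,k}$, hence so does their sum; the level-$k$ slice of the affine equation therefore takes place in the linear subspace $\mathfrak{g}_{d,k}\cap(\R^d)^{\otimes k}$, which is preserved by scalar multiplication by $1/N$. Beyond this bookkeeping, the proof is a clean top-down induction driven entirely by the grading and polynomial representations from \Cref{sec:nonCommutativePolys}; as a by-product it exhibits $\mathsf{bary}\colon\mathcal{G}_{d,k}^N\to\mathcal{G}_{d,k}$ as a non-commutative polynomial in the sample $\mathbf{x}_1,\dots,\mathbf{x}_N$.
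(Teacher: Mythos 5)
The paper itself gives no proof of this statement: it is imported verbatim from \cite[Theorem~10]{clausel2024barycenterfreenilpotentlie}, whose argument is likewise a graded, level-by-level construction (the same triangular structure $m_j=\tfrac1N\sum_i x_{i,j}+r_j(\text{lower levels})$ from \cite[Lemma~15]{clausel2024barycenterfreenilpotentlie} reappears in the proof of \Cref{lem:bary_idempotent}). Your induction on the truncation level, using that $\exp$, $\log$, inversion and multiplication are all of the form ``top slice plus a polynomial in lower slices'' (\Cref{lem:expLogMulLinIn_ck,lem:MulLinIn_ck}), is therefore essentially the same proof, carried out in tensor slices rather than in Lyndon coordinates, and it is correct in substance.

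The one step you flag is indeed the one that needs care, and as written it is circular: you justify $\mathbf{z}^{(k)}\in\mathfrak{g}_{d,k}\cap(\R^d)^{\otimes k}$ by observing that each $\log(\mathbf{m}^{-1}\mathbf{x}_i)$ lies in $\mathfrak{g}_{d,k}$, but that presupposes $\mathbf{m}\in\mathcal{G}_{d,k}$, i.e.\ exactly the membership $\mathbf{z}\in\mathfrak{g}_{d,k}$ you are trying to establish. The repair is short: by the inductive hypothesis $\mathbf{z}^{(1)}\oplus\dots\oplus\mathbf{z}^{(k-1)}\in\mathfrak{g}_{d,k-1}\subseteq\mathfrak{g}_{d,k}$, so pick \emph{any} Lie lift $\tilde{\mathbf{z}}:=\mathbf{z}^{(1)}\oplus\dots\oplus\mathbf{z}^{(k-1)}\oplus\mathbf{w}$ with $\mathbf{w}\in\mathfrak{g}_{d,k}\cap(\R^d)^{\otimes k}$ (e.g.\ $\mathbf{w}=0$) and set $\tilde{\mathbf{m}}:=\exp(\tilde{\mathbf{z}})\in\mathcal{G}_{d,k}$. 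Then $\sum_i\log(\tilde{\mathbf{m}}^{-1}\mathbf{x}_i)\in\mathfrak{g}_{d,k}$ genuinely, its level-$k$ slice equals $-N\mathbf{w}+\sum_i\mathbf{x}_i^{(k)}+T$ with the \emph{same} remainder $T$ (since $T$ depends only on levels $<k$), and since $\mathfrak{g}_{d,k}$ is graded this forces $\sum_i\mathbf{x}_i^{(k)}+T\in\mathfrak{g}_{d,k}\cap(\R^d)^{\otimes k}$, hence $\mathbf{z}^{(k)}=\tfrac1N\bigl(\sum_i\mathbf{x}_i^{(k)}+T\bigr)$ is Lie. With that patch your argument is a complete, self-contained proof.
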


Note that \cite{clausel2024barycenterfreenilpotentlie} presents \Cref{def:groupmean,thm:groupmean} for arbitrary probability distributions, thereby covering \emph{weighted} barycenters. We restrict ourselves to uniformly distributed weights for simplicity, and note that all results can be lifted to the more general setting of arbitrary distributions. Importantly, the barycenter is not simply given by the pointwise expectation of tensor entries, as is the case in the definition of the expected signature \cite[Theorem 2.3 and Remark 2.6.2]{bib:Ni2012}. Moreover, for truncation level $k \geq 3$, it is also not the pointwise expectation in the associated Lie algebra; see \cite[Example 4.9]{clausel2024barycenterfreenilpotentlie}.

We recall that the barycenter is left- and right-equivariant, and that it commutes with group inversion. 
\begin{theorem}\label{thm:equivariance_bary}
For all $d,k,N\in\N$ and  $\mathbf{x}_{1},\dots,\mathbf{x}_N,\mathbf{g}\in\G_{d,k}$ we have 
\begin{align*}
\bary(\mathbf{g}\cdot\mathbf{x}_1,\dots,\mathbf{g}\cdot\mathbf{x}_N)&=\mathbf{g}\cdot\bary(\mathbf{x}_1,\dots,\mathbf{x}_N),\\
\bary(\mathbf{x}_1\cdot\mathbf{g},\dots,\mathbf{x}_N\cdot\mathbf{g})&=\bary(\mathbf{x}_1,\dots,\mathbf{x}_N)\cdot\mathbf{g}\text{, and}\\
\bary(\mathbf{x}_1^{-1},\dots,\mathbf{x}_N^{-1})&=\bary(\mathbf{x}_1,\dots,\mathbf{x}_N)^{-1}.
\end{align*}
\end{theorem}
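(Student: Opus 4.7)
The plan is to verify each of the three identities by substituting the proposed formula into the defining equation \eqref{eq:barycenterCondition} of the barycenter, and then invoking the uniqueness part of \Cref{thm:groupmean}. Write $\mathbf{m} := \bary(\mathbf{x}_1,\dots,\mathbf{x}_N)$, so by assumption $\sum_{i=1}^N \log(\mathbf{m}^{-1}\cdot\mathbf{x}_i)=0$.

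For left equivariance I would simply substitute $\mathbf{g}\cdot\mathbf{m}$ as a candidate barycenter of $(\mathbf{g}\cdot\mathbf{x}_1,\dots,\mathbf{g}\cdot\mathbf{x}_N)$. Then $(\mathbf{g}\cdot\mathbf{m})^{-1}\cdot(\mathbf{g}\cdot\mathbf{x}_i) = \mathbf{m}^{-1}\cdot\mathbf{x}_i$, so the defining equation is trivially satisfied. Uniqueness gives the claim.

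For right equivariance, I would similarly substitute $\mathbf{m}\cdot\mathbf{g}$ into the defining equation, producing terms $\log(\mathbf{g}^{-1}\cdot\mathbf{m}^{-1}\cdot\mathbf{x}_i\cdot\mathbf{g})$. Here the key auxiliary fact is the conjugation-equivariance of the logarithm, $\log(\mathbf{g}^{-1}\cdot\mathbf{y}\cdot\mathbf{g}) = \mathbf{g}^{-1}\cdot\log(\mathbf{y})\cdot\mathbf{g}$, which follows immediately from the power series \eqref{eq:def_log} since conjugation is an algebra automorphism of $T_{d,k}$ and hence commutes with every polynomial expression in $\mathbf{y}-\mathbf{1}$. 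Pulling $\mathbf{g}^{-1}$ and $\mathbf{g}$ outside the finite sum then reduces the condition to the defining equation for $\mathbf{m}$.

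For the inversion statement, I would substitute $\mathbf{m}^{-1}$ as a candidate barycenter of $(\mathbf{x}_1^{-1},\dots,\mathbf{x}_N^{-1})$, yielding the need to show $\sum_{i=1}^N \log(\mathbf{m}\cdot\mathbf{x}_i^{-1})=0$. This is the main obstacle, because a direct algebraic substitution does not immediately recover the defining sum for $\mathbf{m}$. I would use two properties of the logarithm: first, $\log(\mathbf{y}^{-1}) = -\log(\mathbf{y})$ (which again follows from \eqref{eq:def_log} and the fact that $\exp$ and $\log$ are mutually inverse non-commutative polynomial maps by \Cref{lem:exp_log_inv_is_polynomial}); and second, the conjugation property used above. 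Rewriting $\mathbf{m}\cdot\mathbf{x}_i^{-1} = \bigl(\mathbf{x}_i\cdot\mathbf{m}^{-1}\bigr)^{-1}$ and $\mathbf{x}_i\cdot\mathbf{m}^{-1} = \mathbf{m}\cdot(\mathbf{m}^{-1}\cdot\mathbf{x}_i)\cdot\mathbf{m}^{-1}$ gives
\[
\log(\mathbf{m}\cdot\mathbf{x}_i^{-1}) = -\mathbf{m}\cdot\log(\mathbf{m}^{-1}\cdot\mathbf{x}_i)\cdot\mathbf{m}^{-1},
\]
so the finite sum over $i$ again reduces to the defining equation for $\mathbf{m}$, which vanishes. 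Uniqueness concludes.

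The only non-cosmetic ingredient in the whole argument is the pair of logarithm identities (conjugation-equivariance and $\log(\mathbf{y}^{-1})=-\log(\mathbf{y})$); both are standard consequences of the power-series definition in the nilpotent truncation and could be stated as a one-line lemma before the proof. Once these are in hand, each of the three identities is a one-line verification plus an appeal to uniqueness from \Cref{thm:groupmean}.
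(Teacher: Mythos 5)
Your proposal is correct: each identity follows by substituting the candidate into the defining equation \eqref{eq:barycenterCondition}, using the conjugation-equivariance of $\log$ and the identity $\log(\mathbf{y}^{-1})=-\log(\mathbf{y})$ (both immediate from the power series in the nilpotent truncation), and invoking uniqueness from \Cref{thm:groupmean}. The paper itself only cites external references for this theorem, and those references argue in essentially the same substitution-plus-uniqueness way, so your self-contained verification matches the intended proof.
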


A proof can be found in \cite[Theorem 5.13]{bib:PL2020} or \cite[Theorem 13]{clausel2024barycenterfreenilpotentlie}. Note that these laws of compatibility appear also in the theory of \emph{proper scoring rules}, see \cite[Theorem 4.2]{BO24}. 
\par
In the following we show that for $N=2$ group sample elements, the local solution of \cite[Proposition~5.2]{bib:PL2020} is a \emph{global solution} in $\mathcal{G}_{d,k}$ for every $d$ and $k$. 

\begin{proposition}\label{prop:baryFormulaN2}
    If we have $N=2$ group samples, then $$\bary(\mathbf{x}_1,\mathbf{x}_2)=\mathbf{x}_1\cdot\exp \left(\frac12\log(\mathbf{x}_1^{-1}\cdot\mathbf{x}_2) \right)$$
    for all $d,k\in\N$ and $\mathbf{x}_1,\mathbf{x}_2\in\mathcal{G}_{d,k}$. 
\end{proposition}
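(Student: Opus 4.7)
The plan is to verify that the proposed expression $\mathbf{m} := \mathbf{x}_1\cdot\exp\!\bigl(\tfrac12\log(\mathbf{x}_1^{-1}\cdot\mathbf{x}_2)\bigr)$ satisfies the defining condition \eqref{eq:barycenterCondition} of \Cref{def:groupmean}, and then invoke the uniqueness statement in \Cref{thm:groupmean}. Since the equation to check has exactly two summands, this becomes a direct manipulation in $\mathcal{G}_{d,k}$.

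First I would introduce the abbreviation $\mathbf{y} := \log(\mathbf{x}_1^{-1}\cdot\mathbf{x}_2)\in\mathfrak{g}_{d,k}$, so that $\mathbf{x}_1^{-1}\cdot\mathbf{x}_2 = \exp(\mathbf{y})$ and $\mathbf{m}=\mathbf{x}_1\cdot\exp(\tfrac12\mathbf{y})$. Using that exp and log are mutually inverse (a consequence of \Cref{lem:exp_log_inv_is_polynomial}), I would compute
\begin{align*}
\mathbf{m}^{-1}\cdot\mathbf{x}_1 &= \exp\!\bigl(-\tfrac12\mathbf{y}\bigr)\cdot\mathbf{x}_1^{-1}\cdot\mathbf{x}_1 = \exp\!\bigl(-\tfrac12\mathbf{y}\bigr),\\
\mathbf{m}^{-1}\cdot\mathbf{x}_2 &= \exp\!\bigl(-\tfrac12\mathbf{y}\bigr)\cdot\mathbf{x}_1^{-1}\cdot\mathbf{x}_2 = \exp\!\bigl(-\tfrac12\mathbf{y}\bigr)\cdot\exp(\mathbf{y}).
\end{align*}
The central observation is that $\mathbf{y}$ commutes with itself in the associative algebra $T_{d,k}$, hence $\exp(a\mathbf{y})\cdot\exp(b\mathbf{y})=\exp((a+b)\mathbf{y})$ for any scalars $a,b\in\R$; this is the only identity from the Baker--Campbell--Hausdorff formula that I will need, and it follows from \eqref{eq:def_exp} by expanding both sides as finite sums (the truncation is harmless because all terms involve only powers of a single element $\mathbf{y}$). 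Consequently $\mathbf{m}^{-1}\cdot\mathbf{x}_2 = \exp(\tfrac12\mathbf{y})$.

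Applying $\log$ to both expressions then yields
\begin{equation*}
\log(\mathbf{m}^{-1}\cdot\mathbf{x}_1) + \log(\mathbf{m}^{-1}\cdot\mathbf{x}_2) = -\tfrac12\mathbf{y} + \tfrac12\mathbf{y} = 0,
\end{equation*}
so $\mathbf{m}$ solves \eqref{eq:barycenterCondition} for the sample $(\mathbf{x}_1,\mathbf{x}_2)$. By the uniqueness of the barycenter asserted in \Cref{thm:groupmean}, this forces $\bary(\mathbf{x}_1,\mathbf{x}_2)=\mathbf{m}$, which is exactly the claim.

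The only nontrivial point is the commutativity identity $\exp(a\mathbf{y})\cdot\exp(b\mathbf{y})=\exp((a+b)\mathbf{y})$ within $T_{d,k}$; I expect this to be the main (though mild) obstacle, as one must check that the truncation to level $k$ does not interfere with the usual power-series manipulation. This is handled by noting that the product $\exp(a\mathbf{y})\cdot\exp(b\mathbf{y})$ in $T_{d,k}$ equals the image of the (untruncated) product under the projection to $T_{d,k}$, and since $\mathbf{y}$ commutes with itself, the untruncated identity for the scalar exponential applies verbatim before projecting.
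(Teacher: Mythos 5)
Your proof is correct and follows essentially the same strategy as the paper's: verify that the candidate satisfies the defining equation \eqref{eq:barycenterCondition} using the fact that exponentials of scalar multiples of a single Lie algebra element multiply like a one-variable power series, then conclude by the uniqueness in \Cref{thm:groupmean}. The only structural difference is that the paper first establishes the base case $\bary(1,\mathbf{z})=\exp(\tfrac12\log\mathbf{z})$ and then translates by $\mathbf{x}_1$ via the left-equivariance of \Cref{thm:equivariance_bary}, whereas you verify the condition directly for the general pair, which is an equally valid (and slightly more self-contained) unfolding of the same computation.
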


\begin{proof}
    We first show \begin{equation}\bary(1,\mathbf{z})=\exp\left(\frac12\log(\mathbf{z})\right)\label{eq:baryWithOrigin1}
    \end{equation}
    for every $\mathbf{z}\in\mathcal{G}_{d,k}$. In fact $\exp(\frac12\log(\mathbf{z}))$ is a solution of \eqref{eq:barycenterCondition} with group sample elements $1$ and $\mathbf{z}$,
    \begin{align*}
        &\log\left(\exp\left(\frac12\log\left(\mathbf{z}\right)\right)^{-1}\cdot1\right)
        +
        \log\left(\exp\left(\frac12\log\left(\mathbf{z}\right)\right)^{-1}\cdot\mathbf{z}\right)\\
        &=-\frac12\log(\mathbf{z})
        +\log\left(\exp\left(-\frac12\log(\mathbf{z})\right)\cdot\exp(\log(\mathbf{z}))\right)\\
        &=-\frac12\log(\mathbf{z})
        +\frac12\log(\mathbf{z})=0,
    \end{align*}
    where we use the Baker–Campbell–Hausdorff formula with commuting arguments in the second summand; see for instance \cite[Theorem 7.24]{Friz_Victoir_2010}. Therefore, by the uniqueness of the barycenter from \Cref{thm:groupmean}, we conclude \eqref{eq:baryWithOrigin1}. With equivariance from \Cref{thm:equivariance_bary},
    \begin{align*}
    \bary(\mathbf{x}_1,\mathbf{x}_2)
&=\mathbf{x}_1\cdot\mathbf{x}_1^{-1}\cdot\bary(\mathbf{x}_1,\mathbf{x}_2)=\mathbf{x}_1\cdot\bary(1,\mathbf{x}_1^{-1}\cdot\mathbf{x}_2)
=\mathbf{x}_1\cdot\exp(\frac{1}2\log(\mathbf{x}_1^{-1}\cdot\mathbf{x}_2))
\end{align*}
using \eqref{eq:baryWithOrigin1} for $\mathbf{z}:=\mathbf{x}^{-1}_1\mathbf{x}_2$. 
\end{proof}

We provide an infinite class of group elements that have a barycenter solution that does not depend on the truncation level $k$ nor the dimension $d$. 
\begin{proposition}\label{prop:exponents_sample}
    For all $d,k,N\in\N$, $\mathbf{x}\in\mathcal{G}_{d,k}$ and $u\in\N^N$ with $u_1+\dots+u_N\in N\Z$ we have 
    $$\bary(\mathbf{x}^{u_1},\dots,\mathbf{x}^{u_N})=\mathbf{x}^{(u_1+\dots+u_N)/N}.$$
\end{proposition}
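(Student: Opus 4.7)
Set $v := (u_1+\dots+u_N)/N$, which is an integer by assumption. The plan is to verify directly that the candidate $\mathbf{m} := \mathbf{x}^v$ satisfies the defining condition \eqref{eq:barycenterCondition} and then invoke the uniqueness part of \Cref{thm:groupmean}. The simplification that makes everything fall out is that all the group elements in sight -- namely $\mathbf{m}$, $\mathbf{m}^{-1}$, and each $\mathbf{x}^{u_i}$ -- lie in the one-parameter subgroup generated by $\mathbf{x}$, hence mutually commute.

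The key algebraic fact I need is $\log(\mathbf{x}^n) = n\log(\mathbf{x})$ for every $n\in\Z$. Writing $\mathbf{x} = \exp(\log\mathbf{x})$ and applying the Baker--Campbell--Hausdorff formula inductively to commuting arguments (the same tool used already in the proof of \Cref{prop:baryFormulaN2}, cf.\ \cite[Theorem 7.24]{Friz_Victoir_2010}) gives $\mathbf{x}^n = \exp(n\log\mathbf{x})$, and then injectivity of $\exp$ on $\LieAlg_{d,k}$ yields the claim; the same commuting-arguments BCH also shows $\mathbf{m}^{-1}\cdot\mathbf{x}^{u_i} = \mathbf{x}^{u_i-v}$.

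With these identities the barycenter condition is immediate:
\begin{equation*}
\sum_{i=1}^N \log(\mathbf{m}^{-1}\cdot\mathbf{x}^{u_i})
= \sum_{i=1}^N \log(\mathbf{x}^{u_i - v})
= \Bigl(\sum_{i=1}^N u_i - Nv\Bigr)\log(\mathbf{x})
= 0,
\end{equation*}
by the definition of $v$. Uniqueness of the barycenter in $\G_{d,k}$ (\Cref{thm:groupmean}) then forces $\bary(\mathbf{x}^{u_1},\dots,\mathbf{x}^{u_N}) = \mathbf{m} = \mathbf{x}^v$. Alternatively, one could start by applying left-equivariance from \Cref{thm:equivariance_bary} with $\mathbf{g} = \mathbf{x}^{-v}$ to reduce to the case $\sum u_i = 0$ with candidate $\mathbf{m} = \ec$, but this only rearranges the same verification.

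I do not anticipate a serious obstacle here: the whole statement is essentially a consequence of the fact that $\exp$ restricted to the abelian Lie subalgebra $\R\cdot\log(\mathbf{x})$ is a group homomorphism. The only thing to be slightly careful about is that the exponents $u_i - v$ may be negative, so one really does need $\log(\mathbf{x}^n) = n\log(\mathbf{x})$ for all $n\in\Z$, not just $n\in\N$; but this follows from the same BCH argument applied to $\mathbf{x}^{-1} = \exp(-\log\mathbf{x})$.
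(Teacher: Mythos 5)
Your proof is correct and follows essentially the same route as the paper's: verify that $\mathbf{m}=\mathbf{x}^{(u_1+\dots+u_N)/N}$ satisfies the defining equation \eqref{eq:barycenterCondition} by exploiting that all elements involved commute (so BCH degenerates and logarithms add), then conclude by the uniqueness part of \Cref{thm:groupmean}. Your extra care about $\log(\mathbf{x}^n)=n\log(\mathbf{x})$ for negative $n$ is a harmless elaboration of what the paper leaves implicit.
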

\begin{proof}
    It suffices to show that $\mathbf{m}:=\mathbf{x}^{(u_1+\dots+u_N)/N}$ is a solution of \eqref{eq:barycenterCondition}. Clearly $\mathbf{m}^{-1}$ and $\mathbf{x}^{u_i}$ commute for all $i\in[N]$, so $\log(\mathbf{m}^{-1}\mathbf{x}^{u_i})=-\log(\mathbf{m})+{u_i}\log(\mathbf{x})$, and hence 
    \begin{align*}
        \sum_{i=1}^N \log( \mathbf{m}^{-1} \cdot\mathbf{x}_i^{u_i} )=-N\log(\mathbf{m})+\sum_{i=1}^Nu_i\log(\mathbf{x})=0. \tag*{\qedhere}
    \end{align*} 
\end{proof}
\begin{example} We have 
$\bary(\mathbf{x}^2,1)=\mathbf{x}$, 
 $\bary(\mathbf{x},\mathbf{x}^{-1})=1$, and $\bary(\mathbf{x}^6,\mathbf{x}^2,\mathbf{x})=\mathbf{x}^3$.  
\end{example}

We state our first main result, showing that every element of the free nilpotent Lie group is a barycenter of a group sample, where all but one sample element can be fixed arbitrarily.

\begin{theorem}\label{thm:bary_is_algebraic}
    For every $d,k,N\in\N$, the barycenter  $\bary:\mathcal{G}_{d,k}^N\rightarrow\mathcal{G}_{d,k}$ is an epimorphism of algebraic varieties. If $N-1$ of the $N$ group sample elements are arbitrarily fixed, then it is an isomorphism. 
    \end{theorem}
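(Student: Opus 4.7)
The plan is to establish first that $\bary$ is a morphism of varieties by resolving the defining equation \eqref{eq:barycenterCondition} level-by-level, and then to construct an explicit polynomial inverse for the restricted map obtained by fixing $N-1$ arguments; both surjectivity and the isomorphism claim then follow. For the morphism property, I would solve \eqref{eq:barycenterCondition} inductively on the level $\ell\in\{1,\dots,k\}$. By \Cref{lem:expLogMulLinIn_ck} and \Cref{lem:MulLinIn_ck}, the logarithm, group inversion and group multiplication are each represented by non-commutative polynomials whose level-$\ell$ output depends \emph{affinely} on the level-$\ell$ input (with integer leading coefficients $+1$, $-1$, $+1$, respectively) modulo a polynomial in strictly lower levels. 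Composing these, the level-$\ell$ component of $\sum_{i=1}^N \log(\mathbf{m}^{-1}\cdot\mathbf{x}_i)$ takes the shape
\[
-N\,\mathbf{m}^{(\ell)} + \sum_{i=1}^N \mathbf{x}_i^{(\ell)} + T_\ell\bigl(\mathbf{m}^{(<\ell)}, \mathbf{x}_1^{(<\ell)}, \dots, \mathbf{x}_N^{(<\ell)}\bigr)
\]
for a fixed non-commutative polynomial $T_\ell$ in the lower-level variables. Setting this to zero and inducting on $\ell$ yields an explicit non-commutative polynomial expression for each $\mathbf{m}^{(\ell)}$ in terms of $\mathbf{x}_1,\dots,\mathbf{x}_N$; \Cref{lem:non-com-poly-implies-poly-map} then promotes this to a morphism of algebraic varieties.

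Next I would fix $\mathbf{x}_1,\dots,\mathbf{x}_{N-1}\in\mathcal{G}_{d,k}$ and consider the morphism $\phi:\mathcal{G}_{d,k}\to\mathcal{G}_{d,k}$, $\mathbf{x}_N\mapsto \bary(\mathbf{x}_1,\dots,\mathbf{x}_N)$. Solving \eqref{eq:barycenterCondition} for the last summand suggests the explicit candidate inverse
\[
\psi(\mathbf{m}) := \mathbf{m}\cdot \exp\!\left(-\sum_{i=1}^{N-1}\log(\mathbf{m}^{-1}\cdot\mathbf{x}_i)\right),
\]
which is itself a morphism by \Cref{lem:expLogMulLinIn_ck}, \Cref{lem:MulLinIn_ck} and \Cref{lem:non-com-poly-implies-poly-map}. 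Direct substitution combined with the uniqueness part of \Cref{thm:groupmean} gives $\phi\circ\psi=\mathrm{id}$ and $\psi\circ\phi=\mathrm{id}$, so $\phi$ is an isomorphism of varieties, proving the second assertion. Since this holds for any choice of the $N-1$ fixed elements, $\bary$ is globally surjective, and together with the morphism property of the first step this shows that $\bary$ is an epimorphism.

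The delicate point is pinning down the affine shape of the level-$\ell$ equation in the first step: one must verify that the coefficient of $\mathbf{m}^{(\ell)}$ inside each $\log(\mathbf{m}^{-1}\cdot\mathbf{x}_i)^{(\ell)}$ equals $-1$ independently of $i$ and of the remaining samples, so that summation yields the nonzero scalar $-N$ required for level-by-level resolution. All remaining manipulations — assembling $\psi$, verifying $\phi\circ\psi=\mathrm{id}$, and confirming that $\psi(\mathbf{m})$ lands in $\mathcal{G}_{d,k}$ (which it does by construction, since $\psi$ is built entirely from group operations on $\mathcal{G}_{d,k}$) — amount to routine applications of the polynomial representations collected in \Cref{sec:nonCommutativePolys}.
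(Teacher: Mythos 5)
Your proposal is correct. The first half --- resolving \eqref{eq:barycenterCondition} level by level, using that logarithm, inversion and multiplication act as $\pm\mathrm{id}$ on the top graded component modulo lower levels, so that the level-$\ell$ equation reads $-N\mathbf{m}^{(\ell)}+\sum_i\mathbf{x}_i^{(\ell)}+T_\ell(\text{lower levels})=0$ --- is essentially the paper's own construction (\Cref{lem:baryIsNonComPolyMap} via the polynomials $g$, $f_j$, $p_j$, followed by \Cref{lem:non-com-poly-implies-poly-map}). Where you genuinely diverge is the bijectivity step. The paper proves bijectivity with $N-1$ samples fixed by passing to Lyndon coordinates on the Lie algebra and inverting a triangular system of commutative polynomials taken from \cite{clausel2024barycenterfreenilpotentlie}, and then only sketches the inverse morphism (``exchange the roles of $y^{(j)}$ and $s^{(j)}$'', with details deferred to the repository). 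Your closed-form inverse
$\psi(\mathbf{m})=\mathbf{m}\cdot\exp\bigl(-\sum_{i=1}^{N-1}\log(\mathbf{m}^{-1}\cdot\mathbf{x}_i)\bigr)$
is cleaner: it is read off directly from the defining equation, it is manifestly a composition of group operations with $\exp$ and $\log$ (hence a morphism by \Cref{lem:expLogMulLinIn_ck,lem:MulLinIn_ck,lem:non-com-poly-implies-poly-map}), it visibly lands in $\mathcal{G}_{d,k}$ since $\sum_i\log(\mathbf{m}^{-1}\cdot\mathbf{x}_i)\in\mathfrak{g}_{d,k}$, and the identities $\psi\circ\phi=\mathrm{id}$ (from \eqref{eq:barycenterCondition}) and $\phi\circ\psi=\mathrm{id}$ (from the uniqueness in \Cref{thm:groupmean}) are immediate. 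It also yields surjectivity of $\bary$ for free, whereas the paper derives it from the idempotency $\bary(\mathbf{x},\dots,\mathbf{x})=\mathbf{x}$. One small caveat: \Cref{lem:expLogMulLinIn_ck,lem:MulLinIn_ck} are stated only for the top level $k$, so your level-$\ell$ induction needs the (easy, grading-based) observation that the same affine shape holds at every level $\ell\leq k$; the paper relies on this implicitly as well when forming $f_j=\pi_j(g+y^{(j)})$.
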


We provide a proof in the remaining of this section. We start with the following elementary properties of the barycenter map. For this proof, we rely on the techniques developed in \cite{clausel2024barycenterfreenilpotentlie}. In particular, we refer to \cite[Section 2.3]{clausel2024barycenterfreenilpotentlie} for further details of Lyndon bases. 

\begin{proposition}\label{lem:bary_idempotent}
For every $d,k,N\in\N$, the barycenter map $\bary:\G_{d,k}^N\rightarrow\G_{d,k}$ is
\begin{enumerate}[(i)]
    \item\label{lem:bary_idempotent1} surjective,
    \item\label{lem:bary_idempotent2} invariant under permutation of the sample, and 
    \item\label{lem:bary_idempotent3} bijective, when $N-1$ of $N$ sample elements are fixed. 
\end{enumerate}
\end{proposition}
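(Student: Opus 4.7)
The plan is to derive all three properties directly from the defining equation \eqref{eq:barycenterCondition} and the uniqueness statement of \Cref{thm:groupmean}, together with the fact that $\exp$ and $\log$ are inverse isomorphisms (\Cref{lem:exp_log_inv_is_polynomial}).

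For the permutation invariance in (ii), the left-hand side of \eqref{eq:barycenterCondition} is a finite unordered sum indexed by the sample, so it is manifestly invariant under any permutation of $\mathbf{x}_1,\dots,\mathbf{x}_N$. Hence $\mathbf{m}$ solves the equation for the original sample if and only if it solves it for any reordering, and by uniqueness the barycenter inherits the same symmetry.

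For the surjectivity in (i), I would simply observe that $\bary(\mathbf{m},\dots,\mathbf{m})=\mathbf{m}$ for every $\mathbf{m}\in\G_{d,k}$: the constant tuple $(\mathbf{m},\dots,\mathbf{m})$ satisfies \eqref{eq:barycenterCondition} with solution $\mathbf{m}$, as each summand equals $\log(1)=0$, so $\mathbf{m}$ is the (unique) barycenter. Alternatively, this is \Cref{prop:exponents_sample} applied to $u=(1,\dots,1)$. Either way, surjectivity is immediate.

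The only real content is in the bijectivity claim (iii). By the permutation invariance just established, we may assume the fixed entries are $\mathbf{x}_1,\dots,\mathbf{x}_{N-1}$ and that $\mathbf{x}_N$ is the free variable. Isolating the term involving $\mathbf{x}_N$ in \eqref{eq:barycenterCondition} yields
\begin{equation*}
\log(\mathbf{m}^{-1}\cdot\mathbf{x}_N)=-\sum_{i=1}^{N-1}\log(\mathbf{m}^{-1}\cdot\mathbf{x}_i),
\end{equation*}
so by \Cref{lem:exp_log_inv_is_polynomial} the unique solution is
\begin{equation*}
\mathbf{x}_N=\mathbf{m}\cdot\exp\Bigl(-\sum_{i=1}^{N-1}\log(\mathbf{m}^{-1}\cdot\mathbf{x}_i)\Bigr)\in\G_{d,k}.
\end{equation*}
For any prescribed target $\mathbf{m}\in\G_{d,k}$, this formula produces an $\mathbf{x}_N$ making $\mathbf{m}$ a solution of \eqref{eq:barycenterCondition}; by \Cref{thm:groupmean} it is then automatically \emph{the} barycenter. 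This constructs a two-sided inverse of the map $\mathbf{x}_N\mapsto\bary(\mathbf{x}_1,\dots,\mathbf{x}_{N-1},\mathbf{x}_N)$, proving bijectivity.

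The main (and only) subtlety is recognizing that once $\mathbf{m}$ is fixed, equation \eqref{eq:barycenterCondition} becomes affine-linear in $\log(\mathbf{m}^{-1}\cdot\mathbf{x}_N)$ after moving the other terms to the right-hand side; from there the existence and uniqueness of $\mathbf{x}_N$ are supplied entirely by the bijectivity of $\exp\colon\mathfrak{g}_{d,k}\to\G_{d,k}$.
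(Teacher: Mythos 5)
Your proposal is correct. Parts (i) and (ii) coincide with the paper's argument: surjectivity via $\bary(\mathbf{m},\dots,\mathbf{m})=\mathbf{m}$ and permutation invariance from the symmetry of the sum in \eqref{eq:barycenterCondition}.

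For part (iii) you take a genuinely different and more direct route. The paper passes to Lyndon-basis coordinates on $\LieAlg_{d,k}$ and invokes the triangular polynomial structure of the barycenter from \cite[Lemma 15]{clausel2024barycenterfreenilpotentlie}, solving recursively for the coordinates of $\mathbf{x}_N$ level by level via \eqref{eq:invers:bary}. You instead observe that once $\mathbf{m}$ is fixed, \eqref{eq:barycenterCondition} is an affine equation for $\log(\mathbf{m}^{-1}\cdot\mathbf{x}_N)$ in the vector space $\LieAlg_{d,k}$, and the bijectivity of $\exp$ together with left translation immediately yields the unique $\mathbf{x}_N=\mathbf{m}\cdot\exp\bigl(-\sum_{i=1}^{N-1}\log(\mathbf{m}^{-1}\cdot\mathbf{x}_i)\bigr)$; uniqueness of the barycenter (\Cref{thm:groupmean}) then upgrades ``$\mathbf{m}$ is a solution'' to ``$\mathbf{m}$ is the barycenter,'' and the same formula serves as a two-sided inverse. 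Your argument is shorter, self-contained, and avoids the external reference; moreover, since your inverse is built from $\log$, $\exp$, multiplication and inversion, it is visibly a composition of the polynomial maps of \Cref{lem:exp_log_inv_is_polynomial,lem:prod_is_polynomial}, which would even streamline the isomorphism claim in \Cref{thm:bary_is_algebraic}. What the paper's coordinate approach buys in exchange is the explicit commutative polynomial formula \eqref{eq:invers:bary} in Lyndon coordinates, which is reused elsewhere in their framework. Both proofs are complete.
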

\begin{proof}
For part \eqref{lem:bary_idempotent1} it suffices to show $\mathbf{x}
    =\bary(\mathbf{x},\dots,\mathbf{x})$ for all $\mathbf{x}\in\G_{d,k}$. 
 With \eqref{eq:barycenterCondition} we obtain $0=\log(\mathbf{m}^{-1}\cdot\mathbf{x})$, hence $\ec=\mathbf{m}^{-1}\cdot\mathbf{x}$, and therefore $\mathbf{m}=\mathbf{x}$, so the claim follows from the uniqueness according to \Cref{thm:groupmean}. Part \eqref{lem:bary_idempotent2} is clear since the summands in \eqref{eq:barycenterCondition} commute.
For part \eqref{lem:bary_idempotent3} we show that for given $\mathbf{x}_{1},\dots,\mathbf{x}_{N-1},\mathbf{y}\in\G_{d,k}$ there is a unique $\mathbf{x}_N\in\G_{d,k}$ such that $\bary(\mathbf{x}_1,\dots,\mathbf{x}_N)=\mathbf{y}$. 
First, we assume that $\mathbf{x}_N$ exists and show that it is uniquely determined. 
Let $\mathcal{B}$ denote the Lyndon basis for $\mathfrak{g}_{d,k}$. Let $m\in\R^{\lambda_{d,k}}$ with $\mathbf{y}=\exp(\mathcal{B}m)$ and $x\in\R^{N\times \lambda_{d,k}}$ with $\mathbf{x}_i=\exp(\mathcal{B}x_{i,\bullet})$ for all $i\in [N]$ be given. 
We use the polynomials $g_j$ and $r_j$ according to \cite[Lemma 15]{clausel2024barycenterfreenilpotentlie}, so that  
    \begin{equation}\label{eq:commutativePolysDefBarys}m_j=g_j(x)=\frac1N\sum_{i=1}^Nx_{i,j}+r_j(x_{\bullet,j-1},\dots,x_{\bullet,1})\end{equation}
    for all $j\in[\lambda_{d,k}]$, and such that the diagram  
    $$\begin{tikzcd}[column sep = 3cm]
    \G_{d,k}^N 
    \arrow[d,"\bary"'] 
    \arrow[r,"\log\times\cdots\times\log"] 
    & 
    \LieAlg_{d,k}^N 
    \arrow[r,"\mathcal{B}^*\times\cdots\times\mathcal{B}^*"] 
    &
    \R^{N\times \lambda_{d,k}}
    \arrow[d,"g",dotted] 
    \\
    \G_{d,k} 
    & \LieAlg_{d,k}
    \arrow[l,"\exp"]
    &
    \R^{\lambda_{d,k}}
    \arrow[l,"\mathcal{B}"]
    \end{tikzcd}$$
commutes. Here, $\mathcal{B}^*$ denotes the dual of the Lyndon basis. Note that all horizontal arrows are isomorphisms. 
  Therefore with \eqref{eq:commutativePolysDefBarys}, 
    \begin{equation}\label{eq:invers:bary}x_{N,j}=Nm_j-\sum_{i=1}^{N-1}x_{i,j}-Nr_j(x_{\bullet,j-1},\dots,x_{\bullet,1})
    \end{equation}
    and we obtain the unique $\mathbf{x}_N\in\mathcal{G}_{d,k}$ with $\bary(\mathbf{x}_1,\dots,\mathbf{x}_N)=\mathbf{y}$. The explicit formula \eqref{eq:invers:bary} also yields the existence of $\mathbf{x}_N$.
\end{proof}

We now take a different approach, inspired by \cite[Algorithm 3 \& Proposition 34]{clausel2024barycenterfreenilpotentlie}, and use non-commutative polynomials and evaluations from \Cref{sec:nonCommutativePolys} to construct the barycenter map. Note that this approach yields new closed-form expressions, relying on previous results on the existence of the barycenter using Lie algebras,  \Cref{thm:groupmean}.  For this, let $\bary^{(j)}(\mathbf{x})$ denote the projection of $\bary(\mathbf{x})$ to the $j$-th tensor component.

\begin{proposition}
\label{lem:baryIsNonComPolyMap}
 For every $d,k,N\in\N$, the barycenter is a non-commutative polynomial map, that is, there exists a polynomial $q\in\R\langle s_1,\dots,s_N\rangle$ such that $\bary=\eval_q$.
\end{proposition}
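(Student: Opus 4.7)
The strategy is induction on the truncation level $k$, building $q$ level by level. In the base case $k=1$ the group $\mathcal{G}_{d,1}$ is abelian and \eqref{eq:barycenterCondition} reduces to $\sum_{i=1}^N(\mathbf{x}_i^{(1)}-\mathbf{m}^{(1)})=0$, so $\bary$ is represented by $\tfrac{1}{N}(s_1^{(1)}+\dots+s_N^{(1)})$.

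For the inductive step, suppose that for truncation $k-1$ every component $\mathbf{m}^{(j)}$ of the barycenter with $j<k$ is already a non-commutative polynomial in the $\mathbf{x}_i^{(\ell)}$ with $\ell\leq j$. To determine $\mathbf{m}^{(k)}$, examine \eqref{eq:barycenterCondition} at level $k$. By the grading statements of \Cref{lem:expLogMulLinIn_ck} and \Cref{lem:MulLinIn_ck}, the polynomials representing group inversion, group multiplication, and $\log$ are each of the form (linear top-level term) plus a remainder living in the subalgebra generated by strictly lower-level symbols. Chaining these representations and invoking the inductive hypothesis to eliminate $\mathbf{m}^{(\ell)}$ for $\ell<k$, we obtain
$$\log(\mathbf{m}^{-1}\cdot\mathbf{x}_i)^{(k)} \;=\; -\mathbf{m}^{(k)} + \mathbf{x}_i^{(k)} + P_i,$$
where $P_i$ is a non-commutative polynomial in $\{\mathbf{x}_i^{(\ell)} : i\in[N],\,\ell<k\}$. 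Summing over $i$ and imposing \eqref{eq:barycenterCondition} yields
$$\mathbf{m}^{(k)} \;=\; \frac{1}{N}\sum_{i=1}^N \mathbf{x}_i^{(k)} \,+\, \frac{1}{N}\sum_{i=1}^N P_i,$$
which is again a non-commutative polynomial in the sample data. Collecting these expressions for $j=1,\dots,k$ delivers the desired $q\in\R\langle s_1,\dots,s_N\rangle$ with $\bary=\eval_q$; the barycenter does exist to be computed, by \Cref{thm:groupmean}.

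The main technical obstacle is purely one of bookkeeping: after isolating the linear top-level contribution at each stage, one must check that no residual term at level $k$ survives other than $-\mathbf{m}^{(k)}+\mathbf{x}_i^{(k)}$. The grading lemmas of \Cref{sec:nonCommutativePolys} were tailored to exactly this, so the induction proceeds cleanly. One could alternatively read off the conclusion from the commutative formula \eqref{eq:invers:bary} of \Cref{lem:bary_idempotent} via the Lyndon basis, but translating commutative polynomials in Lyndon coordinates back into an element of $\R\langle s_1,\dots,s_N\rangle$ would require essentially the same level-by-level unwinding, so the direct induction is the shorter route.
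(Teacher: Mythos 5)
Your proposal is correct and follows essentially the same route as the paper: the paper also solves the barycenter equation level by level using the grading statements of \Cref{lem:expLogMulLinIn_ck} and \Cref{lem:MulLinIn_ck}, merely phrasing your induction as an explicit recursive substitution of auxiliary variables $y^{(j)}$ representing the barycenter components, and likewise invoking \Cref{thm:groupmean} for existence and uniqueness. No substantive difference.
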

\begin{proof}
We extend $\R\langle s_1,\dots,s_N\rangle$ by $k$ more variables $y^{(j)}$ for our barycenter of $N$ samples. 
Let $h_{\log}$, $h_{\mathsf{prod}}$ and $h_{\mathsf{inv}}$ be non-commutative polynomials that represent the logarithm, multiplication, and inversion according to \Cref{lem:expLogMulLinIn_ck,lem:MulLinIn_ck}, respectively. Then, 
\begin{equation}\label{eq:construction}g:=\frac1N\sum_{i=1}^Nh_{\mathsf{log}}\left(h_{\mathsf{prod}}\left(h_{\mathsf{inv}}\left(\sum_{j=1}^k{y^{(j)}}+1\right),\sum_{j=1}^k{s^{(j)}_i}+1\right)\right)\end{equation}
satisfies  
$\eval_g(\mathbf{x},\bary(\mathbf{x}))=0$
for every sample $\mathbf{x}\in\mathcal{G}^N$ due to \eqref{eq:barycenterCondition}. 
Let $\pi_j$ denote the projection to the graded component of level $j$.
Again with \Cref{lem:expLogMulLinIn_ck,lem:MulLinIn_ck}, 
\begin{equation}\label{eq:fj}
f_j:=\pi_j(g+y^{(j)})\in\R\langle s_1^{(1)},\dots,s_1^{(j)},s_2^{(1)},\dots,s_N^{(j)},y^{(1)},\dots,y^{(j-1)}\rangle
\end{equation}
which evaluates to 
$\eval_{f_j}(\mathbf{x},\bary(\mathbf{x}))=\bary^{(j)}(\mathbf{x})$
for all $j\in[k]$. 
By inserting $f_1,\dots,f_{j-1}$ into $f_j$ we obtain 
$$p_j:=f_j(s_1,\dots,s_N,f_{1},\dots,f_{j-1},0_{k-j+1})\in\R\langle s_1,\dots,s_N\rangle$$
which also evaluates to $\eval_{p_j}(\mathbf{x})=\bary^{(j)}(\mathbf{x})$.
 By summation of all polynomials $p_j$ representing the graded components of the barycenter, we obtain $
      q:=1+\sum_{j=1}^kp_j$
  with $\eval_q(\mathbf{x})=\bary(\mathbf{x})$. 
\end{proof}

We have now collected all the tools to prove our first main result. 
\begin{proof}[Proof of \Cref{thm:bary_is_algebraic}]Combining \Cref{lem:baryIsNonComPolyMap} and \Cref{lem:non-com-poly-implies-poly-map} we see that the barycenter is a morphism. In \Cref{lem:bary_idempotent} we show that it is surjective, and bijective when $N-1$ samples are fixed. 
We can construct a non-commutative polynomial inverse with the same techniques as in the proof of \Cref{lem:baryIsNonComPolyMap} by exchanging the roles of $y^{(j)}$ and $s^{(j)}$ in every truncation step $j$. This is similar to the construction \eqref{eq:construction} and can be found in the Github repository.
 \end{proof}

\begin{remark}\label{rem:polynomialViaBCH}
 The polynomial from \Cref{lem:baryIsNonComPolyMap} factorizes in the following sense:  there exists a polynomial $f\in\R\langle s_1,\dots,s_{N}\rangle$ such that the diagram
    $$\begin{tikzcd}[column sep = 3cm]
    \G_{d,k}^N 
    \arrow[d,"\bary"'] 
    \arrow[r,"\log\times\cdots\times\log"] 
    & 
    \LieAlg_{d,k}^N 
    \arrow[d,"\eval_f",dotted] 
    \\
    \G_{d,k} 
    & \LieAlg_{d,k}
    \arrow[l,"\exp"]
    \end{tikzcd}$$
commutes. 
    Note that \cite[Corollary 41]{clausel2024barycenterfreenilpotentlie} provides an alternative polynomial that can take the role of $f$, using the \emph{asymmetrized  Baker--Campbell--Hausdorff series}.  
\end{remark}

We close this section with a novel closed-form expression for barycenters based on tensor projections up to quadratic truncation.
In \Cref{ex:HeisenbergGroup} we recall that $\mathcal{G}_{2,2}$ is the Heisenberg group. A closed-form expression for barycenters in the Heisenberg group is provided in \cite[Proposition~3]{bib:PL2020}.   
The following expression coincides with this formula, but generalizes to arbitrary $d\geq 2$.

\begin{theorem}\label{closed_formula_bary_k2}If $k=2$, then every $\mathbf{x}\in\G_{d,2}^N$ satisfies 
\begin{align*}
\bary(\mathbf{x})=1
&\boldsymbol{\oplus}\left(\frac1N\sum_{i=1}^N\mathbf{x}_i^{(1)}\right)\\
&\boldsymbol{\oplus}\left(\frac1N\sum_{i=1}^N\mathbf{x}_i^{(2)}-\frac1{2N}\sum_{i=1}^N\mathbf{x}_i^{(1)}\otimes\mathbf{x}_i^{(1)}+\frac{1}{2N^2}\sum_{i_1=1}^N\sum_{i_2=1}^N\mathbf{x}_{i_1}^{(1)}\otimes\mathbf{x}_{i_2}^{(1)}\right).
   \end{align*}
\end{theorem}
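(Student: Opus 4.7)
The plan is to invoke the uniqueness part of \Cref{thm:groupmean}: it suffices to check that the proposed $\mathbf{m}$ on the right-hand side satisfies the defining equation $\sum_{i=1}^N\log(\mathbf{m}^{-1}\cdot\mathbf{x}_i)=0$ of \Cref{def:groupmean}. This is attractive at truncation $k=2$ because the exponential, logarithm, group product, and inversion from \Cref{lem:expLogMulLinIn_ck,lem:MulLinIn_ck} all collapse to polynomials of degree at most two on each graded component, so no Baker--Campbell--Hausdorff machinery or the Lyndon basis arguments from the proof of \Cref{thm:bary_is_algebraic} are required.

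Concretely, I would first expand $\mathbf{m}^{-1}\cdot\mathbf{x}_i$ level by level using $(\mathbf{m}^{-1})^{(1)}=-\mathbf{m}^{(1)}$ and $(\mathbf{m}^{-1})^{(2)}=-\mathbf{m}^{(2)}+\mathbf{m}^{(1)}\otimes\mathbf{m}^{(1)}$, then apply the truncated series $\log(1+\mathbf{z})=\mathbf{z}-\tfrac{1}{2}\mathbf{z}\otimes\mathbf{z}$. Summing over $i$, the level-$1$ part of the barycenter equation reduces to $-N\mathbf{m}^{(1)}+\sum_i\mathbf{x}_i^{(1)}=0$, which forces $\mathbf{m}^{(1)}=\tfrac{1}{N}\sum_{i=1}^N\mathbf{x}_i^{(1)}$, matching the first claimed component.

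The level-$2$ part of the barycenter equation, after substituting this value for $\mathbf{m}^{(1)}$, becomes a linear equation for $\mathbf{m}^{(2)}$ containing three symmetric quadratic combinations of the $\mathbf{x}_i^{(1)}$: the diagonal sum $\sum_i\mathbf{x}_i^{(1)}\otimes\mathbf{x}_i^{(1)}$, and the two cross-term sums $\sum_i\mathbf{m}^{(1)}\otimes\mathbf{x}_i^{(1)}$ and $\sum_i\mathbf{x}_i^{(1)}\otimes\mathbf{m}^{(1)}$. By bilinearity each cross-term sum factors as $N\mathbf{m}^{(1)}\otimes\mathbf{m}^{(1)}$, and since the two contributions enter the level-$2$ component of $\log(\mathbf{m}^{-1}\cdot\mathbf{x}_i)$ with opposite signs they cancel. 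Solving for $\mathbf{m}^{(2)}$ and rewriting $\mathbf{m}^{(1)}\otimes\mathbf{m}^{(1)}=\tfrac{1}{N^2}\sum_{i_1,i_2}\mathbf{x}_{i_1}^{(1)}\otimes\mathbf{x}_{i_2}^{(1)}$ then yields the claimed closed form.

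The only real subtlety is the non-commutativity of $\otimes$: one has to use the bilinearity carefully to pull out $\mathbf{m}^{(1)}$ on the correct side in each cross-term sum, which is what ensures that the $\mathbf{x}_i^{(1)}\otimes\mathbf{m}^{(1)}$ and $\mathbf{m}^{(1)}\otimes\mathbf{x}_i^{(1)}$ contributions cancel rather than combining into a symmetrized tensor. Everything else is routine bookkeeping in the truncated algebra $T_{d,2}$.
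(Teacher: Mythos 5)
Your proposal is correct, but it takes a genuinely different route from the paper. You verify the defining relation $\sum_{i=1}^N\log(\mathbf{m}^{-1}\cdot\mathbf{x}_i)=0$ directly: expanding level by level with the degree-$\le 2$ truncations of inversion, product, and logarithm, the level-$1$ equation forces $\mathbf{m}^{(1)}=\frac1N\sum_i\mathbf{x}_i^{(1)}$, and after substituting this the two cross-term sums $\sum_i\mathbf{m}^{(1)}\otimes\mathbf{x}_i^{(1)}$ and $\sum_i\mathbf{x}_i^{(1)}\otimes\mathbf{m}^{(1)}$ both equal $N\,\mathbf{m}^{(1)}\otimes\mathbf{m}^{(1)}$ and enter with opposite signs, so solving for $\mathbf{m}^{(2)}$ gives exactly the claimed expression; existence and uniqueness from \Cref{thm:groupmean} then identify this forced solution with $\bary(\mathbf{x})$. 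I checked the cancellation and the coefficient $\frac12\mathbf{m}^{(1)}\otimes\mathbf{m}^{(1)}=\frac1{2N^2}\sum_{i_1,i_2}\mathbf{x}_{i_1}^{(1)}\otimes\mathbf{x}_{i_2}^{(1)}$; both are right. The paper instead goes through the factorization of \Cref{rem:polynomialViaBCH}: it takes the asymmetrized Baker--Campbell--Hausdorff polynomial $f=\frac1N\sum_i(s_i^{(1)}+s_i^{(2)})$ at level $2$ from the cited reference and conjugates it by the non-commutative polynomials $h_{\log}$ and $h_{\exp}$, expanding the composite to read off the formula. Your argument is more elementary and self-contained (it mirrors the uniqueness-based verification the paper itself uses in \Cref{prop:baryFormulaN2} for $N=2$ and does not rely on the external aBCH result), whereas the paper's route showcases the general machinery of \Cref{lem:baryIsNonComPolyMap} and \Cref{rem:polynomialViaBCH} that scales to higher truncation levels. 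Both are valid proofs.
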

   \begin{proof}
We use the asymmetrized Baker--Campbell--Hausdorff series \cite[Remark 40]{clausel2024barycenterfreenilpotentlie} and obtain  
\begin{equation}\label{eq:aBCHk2}f:=\frac{1}{N}\sum_{i=1}^N(s_i^{(1)}+s_i^{(2)})\end{equation}
for the factorization according to \Cref{rem:polynomialViaBCH}. 
Let $h_{\exp}$ and  $h_{\log}$ be  polynomials representing exponential and logarithm, respectively.  Hence we can choose the concatenated polynomial  
\begin{align*}
 q&:=h_{\exp}\left(f\left(h_{\log}\left(s_1^{(2)}+s_1^{(1)}+1\right),\dots,h_{\log}\left(s_N^{(2)}+s_N^{(1)}+1\right)\right)\right)
\\
&=h_{\exp}\left(f\left(s_1^{(2)}-\frac12s_1^{(1)}s_1^{(1)}+s_1^{(1)},\dots,s_N^{(2)}-\frac12s_N^{(1)}s_N^{(1)}+s_N^{(1)}\right)\right)\\
&=h_{\exp}\left(\frac1N\sum_{i=1}^N\left(s_i^{(2)}-\frac12s_i^{(1)}s_i^{(1)}+s_i^{(1)}\right)\right)
\\
&=
   \frac1N\sum_{i=1}^N\left(s_{i}^{(2)}+s_i^{(1)}\right)-\frac1{2N}\sum_{i=1}^Ns_i^{(1)}s_i^{(1)}+\frac{1}{2N^2}\sum_{i_1=1}^N\sum_{i_2=1}^Ns_{i_1}^{(1)}s_{i_2}^{(1)}+1   
\end{align*} 
as an alternative to the construction in \Cref{lem:baryIsNonComPolyMap}, and hence the claimed formula. 
\end{proof}

\section{Signature tensors}\label{sec:signatureTensors}
A \emph{path} 
$
X : [0,1] \to \R^d
$
is a (smooth enough) continuous curve such that the following \emph{iterated integrals} \eqref{eq:integrals} are defined. Without loss of generality, we assume that all paths start at the origin, i.e. $X(0)=0$. Since signatures are invariant under translation and since we study paths solely through their signatures, this is a common and natural assumption. 
\begin{definition}
 For a path $X$, the $k$-truncated \emph{iterated-integrals signature} $\sigma=\sigma(X)\in T_{d,k}$ is a direct sum $$\sigma:=\sigma^{(0)}\boldsymbol{\oplus}\sigma^{(1)}\boldsymbol{\oplus}\dots\boldsymbol{\oplus}\sigma^{(k)}$$ of \emph{signature tensors} $\sigma^{(\ell)}\in(\R^d)^{\otimes\ell}$ with entries,  
\begin{align}
  \label{eq:integrals}
  \sigma^{(\ell)}_{w_1\dots w_\ell}:=\int_0^1\int_0^{t_\ell}\dots\int_0^{t_2} \dot X_{w_1}(t_1) \dots \dot X_{w_\ell}(t_\ell)\,\mathrm dt_1 \dots \mathrm dt_\ell
\end{align}
corresponding to
$w_1,\dots ,w_\ell\in[d]$. 
Here, $\dot X$ denotes the differential of $X$. For the constant component, we set $\sigma^{(0)}:=1$. 
\end{definition}

\begin{theorem}[{\cite[Theorem 7.30]{Friz_Victoir_2010}}]\label{thm:sig_are_grouplike}
  For every truncation level $k$ and $d$-dimensional path $X$, its signature is a Lie group element, i.e. $\sigma(X)\in\G_{d,k}$.
\end{theorem}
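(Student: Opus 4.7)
The plan is to verify that $\sigma(X)$ satisfies the defining equations of $\mathcal{G}_{d,k}$ recalled in \Cref{sec:FreeLieGroups}, namely $\sigma^{(0)}=1$ together with the shuffle relations. The normalisation condition is immediate from the convention $\sigma^{(0)}:=1$. For the shuffle relations, the equivalent claim at the level of scalar entries is the classical shuffle identity: for all index words $w=(w_1,\dots,w_\ell)$ and $v=(v_1,\dots,v_m)$ with $\ell+m\le k$,
$$\sigma_w(X)\,\sigma_v(X) \;=\; \sum_{u\in\mathrm{Sh}(w,v)}\sigma_u(X),$$
where $\mathrm{Sh}(w,v)$ is the set of interleavings of $w$ and $v$ preserving the order within each word.

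I would prove this identity by induction on $\ell+m$, using the time-parametrised running signature $\sigma^t_w(X)$ defined by replacing the outermost integration bound $1$ in \eqref{eq:integrals} with $t$, so that $\sigma^0_w=0$ for $|w|\ge 1$ and $\sigma^1_w=\sigma_w(X)$. The base cases $\ell=0$ or $m=0$ are trivial since $\sigma_\emptyset=1$. For the inductive step, differentiating the iterated integral in its upper bound yields $\tfrac{d}{dt}\sigma^t_u=\sigma^t_{\tilde u}\,\dot X_{u_n}(t)$, where $\tilde u$ is $u$ with its last letter $u_n$ removed. Applying the product rule to the left-hand side gives $(\sigma^t_{w'}\sigma^t_v)\dot X_{w_\ell}(t)+(\sigma^t_w\sigma^t_{v'})\dot X_{v_m}(t)$, and by the inductive hypothesis the bracketed factors equal $\sum_{u'\in\mathrm{Sh}(w',v)}\sigma^t_{u'}$ and $\sum_{u''\in\mathrm{Sh}(w,v')}\sigma^t_{u''}$ respectively. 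Differentiating the right-hand side and grouping terms by whether the last letter of $u$ comes from $w$ or from $v$ produces the same two sums. Both sides vanish at $t=0$, so integrating on $[0,1]$ yields the identity at $t=1$.

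The main obstacle is the combinatorial bookkeeping at the heart of the inductive step: one must verify that every $u\in\mathrm{Sh}(w,v)$ is obtained uniquely either by appending $w_\ell$ to an element of $\mathrm{Sh}(w',v)$, or by appending $v_m$ to an element of $\mathrm{Sh}(w,v')$. This disjoint decomposition is what reconciles the two derivatives, and everything else is routine integration. Once the shuffle identity and $\sigma^{(0)}=1$ are established, $\sigma(X)$ satisfies the shuffle relations of \cite{bib:R1993,bib:AFS2019}, and the variety characterization of $\mathcal{G}_{d,k}$ recalled in \Cref{ex:HeisenbergGroup} concludes that $\sigma(X)\in\mathcal{G}_{d,k}$.
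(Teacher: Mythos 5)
Your argument is correct, and it is a complete proof rather than the paper's, since the paper establishes \Cref{thm:sig_are_grouplike} purely by citation to \cite[Theorem~7.30]{Friz_Victoir_2010}. Your route — verify $\sigma^{(0)}=1$ and the shuffle identity $\sigma_w\sigma_v=\sum_{u\in\mathrm{Sh}(w,v)}\sigma_u$ by induction on $\ell+m$ via the product rule on the running signature, then invoke Ree's characterization of $\mathcal{G}_{d,k}$ as the variety cut out by the shuffle relations — is exactly the characterization the paper itself adopts when it defines $\mathcal{G}_{d,k}$ as an affine variety (\cite[Theorem~3.2]{bib:R1993}, \cite[Lemma~4.2]{bib:AFS2019}), so your proof fits the paper's framework particularly well. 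The cited proof in Friz--Victoir instead goes through Chen's identity and approximation: a single segment has signature $\exp(v)$ with $v\in\R^d$, concatenation multiplies signatures (\Cref{thm:Chen}) and hence stays in the group, and a limiting argument handles general paths; that route buys the result for bounded-variation paths without touching shuffles, whereas yours is more self-contained algebraically and directly produces membership in the shuffle variety. Two small points to make explicit if you write this up: the induction must be carried for the running identity $\sigma^t_w\sigma^t_v=\sum_{u\in\mathrm{Sh}(w,v)}\sigma^t_u$ uniformly in $t$ (equivalently, for the path restricted to $[0,t]$), which you do implicitly; and the differentiation step uses that $t\mapsto\sigma^t_u$ is absolutely continuous, which holds under the paper's standing smoothness assumption on $X$ (for merely bounded-variation paths one would replace the derivative computation by a Fubini argument on the iterated integrals). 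The disjoint decomposition $\mathrm{Sh}(w,v)=\mathrm{Sh}(w',v)\,w_\ell\sqcup\mathrm{Sh}(w,v')\,v_m$ that you flag as the main obstacle is indeed the standard recursive definition of the shuffle set (counted as interleavings, so no issue when $w_\ell=v_m$ as letters), so there is no gap there.
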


For two paths $X,Y:[0,1]\rightarrow\R^d$, let $X\star Y$ denote its \emph{path concatenation}, that is, let 
\begin{equation}\label{eq:pathconc}
    (X\star Y):[0,1]\rightarrow \R^d,\;t\mapsto
\begin{cases}X(2t)&t\in[0,\frac12)\\
X(1)+Y(2t-1)&t\in[\frac12,1].
\end{cases}
\end{equation}
Then, the signature is multiplicative with respect to path concatenation and multiplication in the Lie group. This connection between the path space and the Lie group is called \emph{Chen's identity}. We refer to \cite[Theorem 7.11]{Friz_Victoir_2010} for a proof. 

\begin{theorem}[Chen]\label{thm:Chen}For all paths $X,Y:[0,1]\rightarrow\R^d$, 
$$\sigma(X\star Y)=\sigma(X)\cdot\sigma(Y).$$
\end{theorem}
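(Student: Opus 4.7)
The plan is to prove Chen's identity level by level in the truncated tensor algebra. I fix a truncation level $k$ and a multi-index $w_1,\dots,w_\ell \in [d]$ with $\ell \le k$, and aim to show
$$\sigma^{(\ell)}(X\star Y)_{w_1\dots w_\ell} \;=\; \sum_{j=0}^{\ell} \sigma^{(j)}(X)_{w_1\dots w_j}\,\sigma^{(\ell-j)}(Y)_{w_{j+1}\dots w_\ell},$$
which, summed in direct-sum form, is precisely the product $\sigma(X)\cdot\sigma(Y)$ under the multiplication \eqref{eq:tensorMult} of the truncated tensor algebra.

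The central step is to decompose the integration domain of the iterated integral against the concatenated path. Writing $Z := X\star Y$, the derivative $\dot Z(t)$ equals $2\dot X(2t)$ on $[0,\tfrac12)$ and $2\dot Y(2t-1)$ on $(\tfrac12,1]$ (up to the measure-zero breakpoint, which is irrelevant). Plugging this into \eqref{eq:integrals} and partitioning the ordered simplex $\{0<t_1<\dots<t_\ell<1\}$ according to the (unique) index $j\in\{0,1,\dots,\ell\}$ such that $t_1,\dots,t_j \in [0,\tfrac12]$ and $t_{j+1},\dots,t_\ell \in [\tfrac12,1]$, I obtain a sum of $\ell+1$ integrals. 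In each summand I perform the change of variables $u_i = 2t_i$ for $i\le j$ (absorbing the $2$ from $\dot Z$), and $u_i = 2t_i - 1$ for $i>j$, so that each summand factorizes into an iterated integral of $\dot X$ over the ordered simplex in $[0,1]^j$ times an iterated integral of $\dot Y$ over the ordered simplex in $[0,1]^{\ell-j}$. This yields exactly $\sigma^{(j)}(X)_{w_1\dots w_j}\cdot\sigma^{(\ell-j)}(Y)_{w_{j+1}\dots w_\ell}$.

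Summing over $j$ gives the componentwise identity above, and collecting the levels for $\ell = 0,1,\dots,k$ (with the constant component $\sigma^{(0)} = 1$ contributing trivially) yields $\sigma(X\star Y) = \sigma(X)\cdot\sigma(Y)$ as an equality in $T_{d,k}$. Since by \Cref{thm:sig_are_grouplike} both sides lie in $\mathcal{G}_{d,k}$, the equality also holds in the Lie group.

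The main obstacle I expect is the bookkeeping for the simplex decomposition: one must check that the natural map from the union of the $\ell+1$ product simplices to the big simplex is a bijection up to a null set, and that the Jacobian factors from the two halves combine with the $2^\ell$ factor from $\dot Z$ to give exactly the factor $1$ needed to produce the iterated integrals of $X$ and $Y$ in their original parametrization on $[0,1]$. Once these measure-theoretic details are in place the algebraic identification with tensor multiplication is immediate.
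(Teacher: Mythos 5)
Your proof is correct and is exactly the classical argument behind the reference \cite[Theorem 7.11]{Friz_Victoir_2010} that the paper cites in lieu of a proof: decompose the ordered simplex according to which integration variables fall before and after $t=\tfrac12$, rescale each half so the factor $2$ from $\dot Z$ cancels the Jacobian, and recognize the resulting convolution sum $\sum_{j}\sigma^{(j)}(X)\otimes\sigma^{(\ell-j)}(Y)$ as the product \eqref{eq:tensorMult} in $T_{d,k}$. The only delicate points — the measure-zero breakpoint at $t=\tfrac12$ and the fact that the translation $X(1)$ in \eqref{eq:pathconc} does not affect $\dot Z$ — are exactly the ones you flag, and they are handled correctly.
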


From \cite [Equation (8)]{bib:PSS2019} we recall \emph{canonical axis paths}. These paths will be our \emph{dictionary} for path recovery in \Cref{sec:path_recovery,sec:matrix_case}. For a closed-form expression of signatures for axis paths, compare \cite[Example 2.1]{bib:AFS2019}.
\begin{definition}
  The canonical axis path of order $d$, denoted by $\axis^d:[0,1]\rightarrow\R^d$, is the unique piecewise linear path with $d$ segments whose graph interpolates the $d+1$ equidistant support points $(\frac{i}{d}, \sum_{j=1}^ie_j)$ with $0 \leq i \leq d$. 
\end{definition}
\begin{example}\label{ex:axis3}
For $d=3$, we have  
$$\axis^3(t)=
\begin{cases}
3t\mathrm{e}_1
&
0\leq t<\frac13,
\\
\mathrm{e}_1+3(t-\frac13)\mathrm{e}_2
&\frac13\leq t< \frac23,
\\
\mathrm{e}_1+\mathrm{e}_2+3(t-\frac23)\mathrm{e}_3
&\frac23\leq t\leq 1,
\end{cases}$$ 
and its $2$-truncated signature is $$\sigma(\mathsf{Axis}^3)=
1
\boldsymbol{\oplus}
\begin{bsmallmatrix}1\\1\\1\end{bsmallmatrix}
\boldsymbol{\oplus}
\begin{bsmallmatrix}\frac12&1&1\\0&\frac12&1\\0&0&\frac12\end{bsmallmatrix}\in\mathcal{G}_{3,2}.$$
\end{example}

Since integration is linear, it is easy to see (e.g. \cite[Lemma 2.1]{bib:PSS2019}) that the signature is equivariant. 
\begin{lemma}\label{lem:equivariance_signature}
For all $X:[0,1]\rightarrow\R^m$ and $A\in\R^{d\times m}$,
\begin{equation}\label{eq:equivariance_sig}
\sigma(A\cdot X)=A\cdot\sigma(X),
\end{equation}
where $A\cdot X:[0,1]\rightarrow\R^d,t\mapsto A\cdot X(t)$ is the \emph{linearly transformed axis path}, and $A$ acts on $\sigma(X)$ via the matrix-tensor congruence operation \eqref{eq:def_tensorMatrixCongruence}.
\end{lemma}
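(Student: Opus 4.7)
The plan is to verify the identity level by level using the explicit integral formula \eqref{eq:integrals}. Fix $\ell \in \{0,1,\dots,k\}$; the case $\ell=0$ is trivial since both sides evaluate to $1$. For $\ell \geq 1$ and indices $w_1,\dots,w_\ell \in [d]$, I would compute the $(w_1,\dots,w_\ell)$-entry of $\sigma(A\cdot X)^{(\ell)}$ directly from \eqref{eq:integrals} and show it agrees with $\bigl(A\cdot\sigma(X)^{(\ell)}\bigr)_{w_1,\dots,w_\ell}$ given by \eqref{eq:def_tensorMatrixCongruence}.

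First I would write $(A\cdot X)_{w}(t) = \sum_{i=1}^m A_{w,i}\, X_i(t)$, so that $(A\cdot X)\dot{\phantom{X}}_{w}(t) = \sum_{i=1}^m A_{w,i}\, \dot X_i(t)$ on each smooth piece (the iterated integral \eqref{eq:integrals} is interpreted in the Riemann--Stieltjes sense and only requires enough regularity for the integral to make sense). Substituting into \eqref{eq:integrals} gives
\[
\sigma(A\cdot X)^{(\ell)}_{w_1\dots w_\ell}
= \int_0^1\!\!\int_0^{t_\ell}\!\!\cdots\!\!\int_0^{t_2}
\prod_{r=1}^\ell \Bigl(\sum_{i_r=1}^m A_{w_r,i_r}\dot X_{i_r}(t_r)\Bigr)\,
\mathrm dt_1\cdots\mathrm dt_\ell.
\]
By multilinearity I may expand the product of sums into a sum of products, and pull the scalar coefficients $A_{w_r,i_r}$ outside the (multilinear) integral.

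This produces
\[
\sigma(A\cdot X)^{(\ell)}_{w_1\dots w_\ell}
= \sum_{i_1=1}^m\!\cdots\!\sum_{i_\ell=1}^m A_{w_1,i_1}\cdots A_{w_\ell,i_\ell}
\int_0^1\!\!\int_0^{t_\ell}\!\!\cdots\!\!\int_0^{t_2}\dot X_{i_1}(t_1)\cdots\dot X_{i_\ell}(t_\ell)\,\mathrm dt_1\cdots\mathrm dt_\ell,
\]
and the remaining iterated integral is exactly $\sigma(X)^{(\ell)}_{i_1\dots i_\ell}$. Comparing with \eqref{eq:def_tensorMatrixCongruence} completes the identification at level $\ell$. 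Summing over $\ell \in \{0,1,\dots,k\}$ and using that both the signature and the congruence action are defined componentwise with respect to the level grading yields $\sigma(A\cdot X)=A\cdot\sigma(X)$.

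There is no real obstacle here; the only thing to be careful about is the interchange of finite sums with the iterated integral, which is justified by linearity, and the bookkeeping of indices to match the congruence formula \eqref{eq:def_tensorMatrixCongruence} precisely. If $X$ is merely piecewise linear or of bounded variation (which is our actual setting in \Cref{sec:path_recovery,sec:matrix_case}), the integrals are Riemann--Stieltjes and the same manipulations apply verbatim.
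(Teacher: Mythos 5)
Your proof is correct and is exactly the standard argument the paper has in mind: the paper does not spell out a proof but simply notes that ``integration is linear'' and cites \cite[Lemma 2.1]{bib:PSS2019}, whose content is precisely your level-by-level expansion by multilinearity. Nothing further is needed.
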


Clearly, we can write every piecewise linear path $X:[0,1]\rightarrow \R^d$ with $m$ segments as a linearly transformed axis path $X=A\cdot\axis^m$ for a suitable matrix $A\in\R^{d\times m}$. In fact, the (cumulative) columns of $A$ are precisely those points in $X$ that are allowed to be nondifferentiable.  With equivariance \eqref{eq:equivariance_sig} we can therefore evaluate every signature of a piecewise linear path via the signature of the axis path.

\section{Path recovery}\label{sec:path_recovery}

For every $d,k$ and $m \geq 1$ we denote the image of the ($k$-truncated) signature, when restricted to piecewise linear paths with $m$ segments, by
\begin{align*}\mathcal{L}^{\mathsf{im}}_{d,\leq k,m}
&:=\left\{\sigma(X)\;\begin{array}{|l}X:[0,1]\rightarrow\R^{d}\\\text{piecewise linear}\\\text{with $m$ segments}\end{array}\right\}.
\end{align*} 
Note that $\mathcal{L}^{\mathsf{im}}_{d,k,m}$ is the projection to $k$-tensors introduced by \cite[Equation (37)]{bib:AFS2019}. For simplicity, we work with the entire sequence of tensors. The reason for this is that the projection from the Lie group to the $k$-th graded tensor component is only finite-to-one (see \cite[Theorem 6.1]{bib:AFS2019}) and the barycenter map requires the lower graded components in its argument. With equivariance \eqref{eq:equivariance_sig} on each tensor component we obtain 
$$\mathcal{L}^{\mathsf{im}}_{d,\leq k,m}=\left\{A\cdot \sigma(\mathsf{Axis}^{m})\mid A\in \R^{d\times m}\right\}.$$

\begin{corollary}
For all $\mathbf{s}\in T_{d,k}$ we have $\mathbf{s}\in\mathcal{L}^{\mathsf{im}}_{d,\leq k,m}$ if and only if the polynomial system \begin{equation}\label{eq:pol_system}\mathbf{s}=A\cdot \sigma(\axis^m)\end{equation} has a solution $A\in\R^{d\times m}$. In this case, we call $A\cdot\axis^m$ the \emph{recovered path} from $\mathbf{s}$.  
\end{corollary}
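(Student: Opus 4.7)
The plan is to derive this corollary directly from the identity
$$\mathcal{L}^{\mathsf{im}}_{d,\leq k,m}=\left\{A\cdot \sigma(\mathsf{Axis}^{m})\mid A\in \R^{d\times m}\right\}$$
displayed just above the statement, so the proof essentially unpacks definitions once we have both the equivariance of the signature (\Cref{lem:equivariance_signature}) and the observation already made in the text that every piecewise linear $X:[0,1]\to\R^d$ with $m$ segments factors as $X=A\cdot\axis^m$ for a suitable $A\in\R^{d\times m}$ whose columns record the consecutive linear pieces.

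For the forward direction, I would start from $\mathbf{s}\in\mathcal{L}^{\mathsf{im}}_{d,\leq k,m}$, which by definition gives $\mathbf{s}=\sigma(X)$ for some piecewise linear $X$ with $m$ segments. Reading off the displacement vectors between consecutive breakpoints produces a matrix $A\in\R^{d\times m}$ with $X=A\cdot\axis^m$, and applying equivariance graded component by graded component yields $\mathbf{s}=\sigma(A\cdot\axis^m)=A\cdot\sigma(\axis^m)$, so the polynomial system \eqref{eq:pol_system} is solvable. For the reverse direction I would simply reverse this chain: given any solution $A\in\R^{d\times m}$ of $\mathbf{s}=A\cdot\sigma(\axis^m)$, equivariance rewrites the right-hand side as $\sigma(A\cdot\axis^m)$, and since $A\cdot\axis^m$ is a piecewise linear path with at most $m$ segments, it witnesses $\mathbf{s}\in\mathcal{L}^{\mathsf{im}}_{d,\leq k,m}$.

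The only mild subtlety worth noting is the degenerate case where two adjacent columns of $A$ coincide or a column vanishes, so that $A\cdot\axis^m$ has strictly fewer than $m$ segments as a geometric object. This is already absorbed into the displayed equality for $\mathcal{L}^{\mathsf{im}}_{d,\leq k,m}$, which ranges over all $A\in\R^{d\times m}$ without rank or distinctness restrictions, so the two sides of the equivalence match on the nose. There is no real obstacle here; the corollary is essentially a restatement that highlights the polynomial-system perspective used in \Cref{sec:matrix_case} for the recovery task, and the last sentence simply names $A\cdot\axis^m$ as the recovered path associated with the solution.
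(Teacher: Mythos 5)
Your proposal is correct and matches the paper's route: the paper's one-line proof simply invokes the component-wise argument of \cite[Section 2]{bib:PSS2019}, which is exactly the combination of the factorization $X=A\cdot\axis^m$ with the equivariance of \Cref{lem:equivariance_signature} that you spell out. Your extra remark about degenerate columns being absorbed into the unrestricted range of $A$ is a fine observation but not needed beyond what the displayed description of $\mathcal{L}^{\mathsf{im}}_{d,\leq k,m}$ already provides.
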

\begin{proof}We use \cite[Section 2]{bib:PSS2019} on every tensor component. 
\end{proof}

Given signatures of $N$ generic piecewise linear paths $X_{i}:[0,1]\rightarrow\R^{d}$ with $\alpha_i$ segments, we are interested in piecewise linear path recovery from the Lie group barycenter of $\left(\sigma(X_1),\dots,\sigma(X_N)\right)$ with a minimal number of segments.

\begin{example}\label{ex:lowerBoundB2211}
Consider the two $1$-segment paths $X_1,X_2:[0,1]\rightarrow\R^2$ given by $X_1(t):=t\begin{bsmallmatrix}
    1\\1/2
\end{bsmallmatrix}$ and $X_2(t):=t\begin{bsmallmatrix}
    1/2\\1
\end{bsmallmatrix}$ for $t\in[0,1]$. The $2$-truncated signature of a $1$-segment path in $\R^2$ is 
$$\begin{bmatrix}a_1\\a_2\end{bmatrix}\cdot\sigma\left(\axis^1\right)
=
1
\boldsymbol{\oplus}
\begin{bmatrix}a_1\\a_2\end{bmatrix}
\boldsymbol{\oplus}
\frac12\begin{bmatrix}a_1^2&a_1a_2\\a_1a_2&a_2^2\end{bmatrix}$$
where $a_1,a_2\in\R$. Furthermore, we have
$\bary(\sigma(X_1),\sigma(X_2))
=1
\boldsymbol{\oplus}
\frac34\begin{bsmallmatrix}1\\1\end{bsmallmatrix}
\boldsymbol{\oplus}
\frac9{32}\begin{bsmallmatrix}1&1\\1&1\end{bsmallmatrix}$ using, for instance, the closed-form expression of barycenters from \Cref{closed_formula_bary_k2}. If $m=1$, then the associated system \eqref{eq:pol_system} yields exactly one $1$-segment path solution $Y:[0,1]\rightarrow\R^2,t\mapsto \frac34t\begin{bsmallmatrix}
    1\\1
\end{bsmallmatrix}$ for the barycenter of $\sigma(X_1)$ and $\sigma(X_2)$. We illustrate the full verification with our \texttt{OSCAR} implementation in \Cref{ex:lowerBoundB2211_OSCAR}.
\end{example}

\begin{definition} For fixed $d,k$ let 
    \begin{align*}
    B_{d,k}(\alpha):=\min\Big\{m\in\N\mid
    \mathsf{bary}\left(\mathcal{L}^{\mathrm{im}}_{d,\leq k,\alpha_1}\times\dots\times\mathcal{L}^{\mathrm{im}}_{d,\leq k,\alpha_N}\right)\subseteq\mathcal{L}^{\mathrm{im}}_{d,\leq k,m}\Big\}
\end{align*}
be the \emph{barycenter recovery order} of $\alpha=(\alpha_1,\dots,\alpha_N)\in\N^N$. 
\end{definition}

\begin{example}
We illustrate several underlying recovered paths to determine $B_{d,k}(\alpha)$ with $d=2$ in \Cref{fig:Bdkalpha}. Note that for $d=2$ the signature stores the distance and the signed area of a path, see e.g.  \cite{chevyrev2025primersignaturemethodmachine}. We provide a lower bound of $B_{2,2}(1,1)$ in \Cref{ex:lowerBoundB2211}. The explicit verifications of $B_{2,k}$ from \Cref{fig:Bdkalpha} are postponed to \Cref{thm:main_matrix} and \Cref{prop:B2311}. 
\end{example}

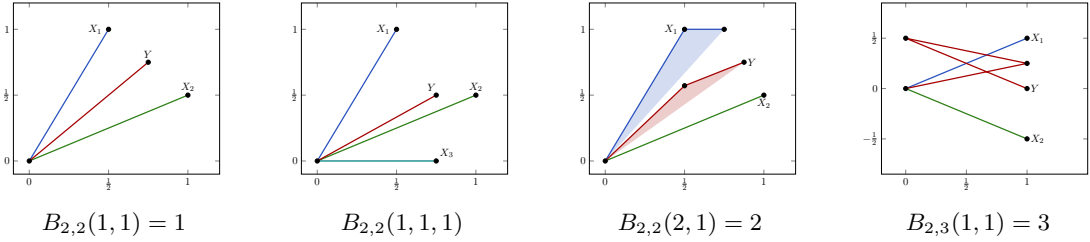
\begin{figure}[h]
    \centering
    \begin{subfigure}{0.24\textwidth}
        \centering
\begin{tikzpicture}[scale=0.4]
\begin{axis}[
    xtick={0,1/2,1},
    xticklabels={$0$,$\frac{1}{2}$,$1$}, 
    ytick={0,1/2,1},
    yticklabels={$0$,$\frac12$,$1$},
    xmin=-0.1, xmax=1.2,
    ymin=-0.1, ymax=1.2,
    domain=0:1,
]

    \addplot[CeruleanBlue, very thick] coordinates {(0,0) (1/2,1)};

    \addplot[myGreen, very thick] coordinates {(0,0) (1,1/2)};

    \addplot[darkred, very thick] coordinates {(0,0) (3/4,3/4)};

    \addplot[mark=*, only marks] coordinates {(0,0) (1/2,1) (3/4,3/4) (1,1/2)};

\node[above] at (axis cs:3/4,3/4) {$Y$};
        \node[above] at (axis cs:1,1/2) {$X_2$};
    \node[left] at (axis cs:1/2,1) {$X_1\;$};

\end{axis}
\end{tikzpicture}
\caption*{$B_{2,2}(1,1)=1$}
\end{subfigure}
\hfill
        \begin{subfigure}{0.24\textwidth}
        \centering
\begin{tikzpicture}[scale=0.4]
\begin{axis}[
    xtick={0,1/2,1},
    xticklabels={$0$,$\frac{1}{2}$,$1$}, 
    ytick={0,1/2,1},
    yticklabels={$0$,$\frac12$,$1$},
    xmin=-0.1, xmax=1.2,
    ymin=-0.1, ymax=1.2,
    domain=0:1,
]

    \addplot[CeruleanBlue, very thick] coordinates {(0,0) (1/2,1)};

    \addplot[myGreen, very thick] coordinates {(0,0) (1,1/2)};
    
    \addplot[teal, very thick] coordinates {(0,0) (3/4,0)};

    \addplot[darkred, very thick] coordinates {(0,0) (3/4,1/2)};

    \addplot[mark=*, only marks] coordinates {(0,0) (1/2,1) (3/4,1/2) (1,1/2) (3/4,0)};

\node[above left] at (axis cs:3/4,1/2) {$\;\;\;Y\!\!$};
    \node[above] at (axis cs:1,1/2) {$X_2$};
    \node[left] at (axis cs:1/2,1) {$X_1\;$};
    \node[above right] at (axis cs:3/4,0) {$X_3$};

\end{axis}
\end{tikzpicture}
\caption*{$B_{2,2}(1,1,1)$}
\end{subfigure}
\hfill
\begin{subfigure}{0.24\textwidth}
        \centering
\begin{tikzpicture}[scale=0.4]
\begin{axis}[
    xtick={0,1/2,1},
    xticklabels={$0$,$\frac{1}{2}$,$1$},  
    ytick={0,1/2,1},
    yticklabels={$0$,$\frac12$,$1$},
    xmin=-0.1, xmax=1.2,
    ymin=-0.1, ymax=1.2,
    domain=0:1,
]

    \addplot[CeruleanBlue, very thick] coordinates {(0,0) (1/2,1) (3/4,1)};

    \addplot[myGreen, very thick] coordinates {(0,0) (1,1/2)};

      \addplot[darkred, very thick] coordinates {(0,0) (1/2,4/7) (1/2+3/8,4/7+5/28)};

\path[fill=CeruleanBlue,opacity=0.2] 
    (axis cs:0,0) -- (axis cs:1/2,1) -- (axis cs:3/4,1) -- cycle;

\path[fill=darkred,opacity=0.2] 
    (axis cs:0,0) -- (axis cs:1/2,4/7) -- (axis cs:1/2+3/8,4/7+5/28) -- cycle;

    \addplot[mark=*, only marks] coordinates {(0,0) (1/2,1)  (1,1/2) (3/4,1) (1/2,4/7) (1/2+3/8,4/7+5/28)};

        \node[below] at (axis cs:1,1/2) {$X_2$};
    \node[left] at (axis cs:1/2,1) {$X_1\;$};
    \node[right] at (axis cs:1/2+3/8,4/7+5/28) {$Y$};

\end{axis}
\end{tikzpicture}
\caption*{$B_{2,2}(2,1)=2$}
\end{subfigure}
\hfill
\begin{subfigure}{0.24\textwidth}
        \centering
\begin{tikzpicture}[scale=0.4]
\begin{axis}[
   xtick={0,1/2,1},
    xticklabels={$0$,$\frac{1}{2}$,$1$}, 
    ytick={-1/2,0,1/2},
    yticklabels={$-\frac12$,$0$,$\frac12$},
    xmin=-0.2, xmax=1.5,
    ymin=-0.85, ymax=0.85,
    domain=0:1,
]

    \addplot[CeruleanBlue, very thick] coordinates {(0,0) (1,1/2)};

        \addplot[myGreen, very thick] coordinates {(0,0) (1,-1/2)};

    \addplot[darkred, very thick] coordinates {(0,0) (1,1/4) (0,1/2) (1,0)};

    \addplot[mark=*, only marks] coordinates {(0,0) (1,1/4) (0,1/2) (1,0) (1,-1/2) (1,1/2)};

\node[right] at (axis cs:1,0) {$Y$};
\node[right] at (axis cs:1,1/2) {$X_1$};
\node[right] at (axis cs:1,-1/2) {$X_2$};

\end{axis}
\end{tikzpicture}
\caption*{$B_{2,3}(1,1)=3$}
\end{subfigure}
\caption{Let $X_1,\dots,X_N$ be a sample of paths with $\alpha\in\N^N$ segments and $Y$ be a path with the minimal number of segments $B_{2,k}(\alpha)$ such that $\bary(\sigma(X_1),\dots,\sigma(X_N))=\sigma(Y)$. 
On the left, we provide illustrations of $N=2,3$ and quadratic truncation,  where the distances and  signed areas are averaged. On the right, we illustrate a minimal $3$-segment recovery with cubic truncation.}
\label{fig:Bdkalpha}
\end{figure}

\begin{proposition}\label{prop:well_def_bary_order}
The number $B_{d,k}(\alpha)$ is well-defined, that is, the minimum is attained. 
\end{proposition}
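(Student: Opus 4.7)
The plan is to verify that the defining set
$$S \;:=\; \bigl\{\, m \in \N \,:\, \bary\bigl(\mathcal{L}^{\mathsf{im}}_{d,\leq k,\alpha_1}\times\cdots\times\mathcal{L}^{\mathsf{im}}_{d,\leq k,\alpha_N}\bigr) \subseteq \mathcal{L}^{\mathsf{im}}_{d,\leq k,m}\,\bigr\}$$
is non-empty; since $S \subseteq \N$, non-emptiness automatically forces $\min S$ to be attained.

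First I would observe that signatures and their barycenter all stay inside $\mathcal{G}_{d,k}$: by \Cref{thm:sig_are_grouplike} each factor $\mathcal{L}^{\mathsf{im}}_{d,\leq k,\alpha_i}$ is contained in $\mathcal{G}_{d,k}$, and by \Cref{thm:groupmean} the barycenter sends $\mathcal{G}_{d,k}^N$ into $\mathcal{G}_{d,k}$. Hence the image on the left of the inclusion in $S$ is automatically a subset of $\mathcal{G}_{d,k}$, and the problem reduces to producing a single $m_0 = m_0(d,k)$ with $\mathcal{G}_{d,k} \subseteq \mathcal{L}^{\mathsf{im}}_{d,\leq k,m_0}$, i.e.\ a uniform bound on the number of piecewise linear segments required to realize every element of the free nilpotent Lie group as a signature.

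To produce such an $m_0$, I would fix a Lyndon--Shirshov basis $b_1,\dots,b_{\lambda_{d,k}}$ of $\mathfrak{g}_{d,k}$, whose members are iterated Lie brackets of length $\ell_j := \len(b_j) \leq k$, and then appeal to exponential coordinates of the second kind for the nilpotent group $\mathcal{G}_{d,k}$: every $\mathbf{g} \in \mathcal{G}_{d,k}$ factors as
$$\mathbf{g} \,=\, \exp(c_1 b_1)\cdot\exp(c_2 b_2)\cdots\exp(c_{\lambda_{d,k}} b_{\lambda_{d,k}})$$
for suitable scalars $c_j \in \R$. Each factor $\exp(c_j b_j)$ is then realized as the signature of an explicit piecewise linear commutator path along the axis directions appearing in $b_j$, with segment count $n_j$ bounded in terms of the bracket structure of $b_j$ alone (for instance, three segments suffice for $[\mathrm{e}_p,\mathrm{e}_q]$, while nested brackets require correspondingly more). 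Concatenating these paths and invoking Chen's identity (\Cref{thm:Chen}) yields a piecewise linear path whose signature is $\mathbf{g}$ and which uses at most $m_0 := \sum_j n_j$ segments, a constant depending only on $d$ and $k$. Consequently $m_0 \in S$.

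The main obstacle is constructing the commutator paths in a controlled way for higher-weight Lyndon words, where the naive iterated-commutator construction gives $n_j$ of order $2^{\ell_j}$. For the well-definedness claim any finite uniform bound on $n_j$ suffices, and the much sharper minimal segment counts $B_{d,k}(\alpha)$ are exactly what the paper then determines in the sequel, notably in \Cref{thm:main_matrix}.
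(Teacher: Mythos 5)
Your route is genuinely different from the paper's. You aim to show the defining set is non-empty by establishing a \emph{uniform} Chen--Chow theorem: a single $m_0=m_0(d,k)$ with $\mathcal{G}_{d,k}\subseteq\mathcal{L}^{\mathsf{im}}_{d,\leq k,m_0}$, obtained from coordinates of the second kind and commutator paths. The paper avoids needing any uniformity over the whole group: writing each sample element as $\sigma(A\cdot\ax^{\alpha,i})$ for a single matrix $A$ and using equivariance of the barycenter (\Cref{lem:evalEquivariant}, \Cref{lem:equivariance_signature}, \Cref{lem:baryIsNonComPolyMap}), it reduces the entire image to $A\cdot\bary(\sigma(\ax^{\alpha,1}),\dots,\sigma(\ax^{\alpha,N}))$, i.e.\ to \emph{one fixed} group element, to which the ordinary (pointwise) Chen--Chow theorem \Cref{thm:ChenChow} is applied once; the resulting $u$ is automatically uniform and depends on $\alpha$. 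This reduction is not just a proof device --- the fixed element $W_\alpha$ is what the rest of \Cref{sec:matrix_case} analyzes --- whereas your bound discards $\alpha$ and would be far from sharp. On the other hand, your statement (uniform Chen--Chow) is true and of independent interest, and is essentially what the proof of \cite[Theorem 7.28]{Friz_Victoir_2010} delivers.

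There is, however, a genuine gap in your key step: for a nested bracket $b_j$ of length $\ell_j\geq 2$, the element $\exp(c_j b_j)$ is \emph{not} exactly the signature of the naive piecewise linear commutator path. By Baker--Campbell--Hausdorff, the group commutator of segment signatures equals $\exp(st[\mathrm{e}_p,\mathrm{e}_q]+\text{higher-level brackets})$, and the correction terms are nonzero already for $k=3$ (the unit square path does not have log-signature $[\mathrm{e}_1,\mathrm{e}_2]$ in $\mathcal{G}_{2,3}$). You flag the construction of commutator paths only as an efficiency issue, but the exactness claim itself fails as stated. The fix is standard --- order the Lyndon basis by increasing level and absorb the errors, which live in strictly higher levels of the lower central series, into the later factors by induction on $k$ --- and since any finite uniform bound suffices for well-definedness, the argument can be completed; but as written the factorization step does not go through.
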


In order to prove \Cref{prop:well_def_bary_order}, we recall that every element of the free nilpotent Lie group can be realized as the signature of a piecewise linear path. This result is known as Chen-Chow's theorem, see for instance \cite[Theorem 7.28]{Friz_Victoir_2010}. 

\begin{theorem}[Chen-Chow]\label{thm:ChenChow}
For every $\mathbf{z}\in\mathcal{G}_{d,k}$ there exists an $m\in\N$ such that $\mathbf{z}\in\mathcal{L}^{\mathsf{im}}_{d,\leq k,m}$. 
\end{theorem}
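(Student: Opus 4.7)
The plan is to recognize that $B_{d,k}(\alpha) = \min S_\alpha$ where
$$
S_\alpha := \{m \in \N \mid \mathsf{bary}(\mathcal{L}^{\mathrm{im}}_{d,\leq k,\alpha_1} \times \cdots \times \mathcal{L}^{\mathrm{im}}_{d,\leq k,\alpha_N}) \subseteq \mathcal{L}^{\mathrm{im}}_{d,\leq k,m}\}.
$$
Since $\N$ is well-ordered, it suffices to show $S_\alpha \neq \emptyset$: the existence of a single upper bound automatically forces the minimum to be attained.

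First I would reduce the problem to a statement inside the Lie group. By \Cref{thm:sig_are_grouplike}, every factor $\mathcal{L}^{\mathrm{im}}_{d,\leq k,\alpha_i}$ is contained in $\mathcal{G}_{d,k}$, and by \Cref{thm:groupmean} the barycenter of any $N$-tuple of group elements is again in $\mathcal{G}_{d,k}$. Hence the image of $\mathsf{bary}$ on the product lies in $\mathcal{G}_{d,k}$, and it suffices to produce a single $m_0$ with $\mathcal{G}_{d,k} \subseteq \mathcal{L}^{\mathrm{im}}_{d,\leq k,m_0}$; any such $m_0$ then belongs to $S_\alpha$ regardless of $\alpha$.

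Second, I would produce this uniform $m_0$ from Chen--Chow (\Cref{thm:ChenChow}) combined with monotonicity of the family. The inclusion $\mathcal{L}^{\mathrm{im}}_{d,\leq k,m} \subseteq \mathcal{L}^{\mathrm{im}}_{d,\leq k,m+1}$ follows because splitting any linear segment of a piecewise linear path at its midpoint into two collinear subsegments changes neither the path's image nor its signature. Together with Chen--Chow this gives an ascending chain whose union is $\mathcal{G}_{d,k}$. Each member is the image of the polynomial map $A \mapsto A \cdot \sigma(\axis^m)$ on $\R^{d \times m}$ (\Cref{lem:equivariance_signature}), hence a constructible subset of the irreducible affine variety $\mathcal{G}_{d,k} \cong \R^{\lambda_{d,k}}$ (via \Cref{lem:exp_log_inv_is_polynomial}). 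By Noetherianity the chain of Zariski closures stabilizes, and since the union of the chain equals $\mathcal{G}_{d,k}$, the stabilized closure must be $\mathcal{G}_{d,k}$ itself.

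The main obstacle is passing from Zariski density at the stabilized stage $m_0$ to set-theoretic equality. I would close the gap by invoking Chen's identity (\Cref{thm:Chen}): path concatenation gives $\mathcal{L}^{\mathrm{im}}_{d,\leq k,m} \cdot \mathcal{L}^{\mathrm{im}}_{d,\leq k,m'} \subseteq \mathcal{L}^{\mathrm{im}}_{d,\leq k,m+m'}$, and the standard algebraic-group fact that $U \cdot U^{-1} = G$ for any constructible Zariski-dense subset $U$ of a connected algebraic group $G$, together with realization of group inversion by path reversal, shows that doubling $m_0$ suffices to cover all of $\mathcal{G}_{d,k}$. Equivalently, one can invoke the explicit uniform formulation of Chen--Chow in \cite[Theorem 7.28]{Friz_Victoir_2010} to obtain $m_0$ depending only on $d$ and $k$ directly.
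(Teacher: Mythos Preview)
Your proposal does not address the stated theorem. \Cref{thm:ChenChow} is Chen--Chow, which the paper does not prove at all but simply cites from \cite[Theorem~7.28]{Friz_Victoir_2010}. What you have written is an argument for \Cref{prop:well_def_bary_order} (well-definedness of $B_{d,k}(\alpha)$), and it explicitly \emph{invokes} \Cref{thm:ChenChow} as an input; read as a proof of Chen--Chow it is therefore circular.

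Interpreted as a proof of \Cref{prop:well_def_bary_order}, your route differs from the paper's. The paper assembles a single matrix $A=[A_1\,\cdots\,A_N]$, uses equivariance (\Cref{lem:evalEquivariant,lem:equivariance_signature}) together with \Cref{lem:baryIsNonComPolyMap} to pull $A$ through the barycenter, and thereby reduces to \emph{one} fixed recovery problem \eqref{eq:mainTask}, to which Chen--Chow is applied once; this yields a bound $u=u(d,k,\alpha)$ and, crucially, the reduction \eqref{eq:equivarianceRed2OneRec}--\eqref{eq:mainTask} that drives all of \Cref{sec:matrix_case}. You instead aim for a uniform $m_0$ with $\mathcal{G}_{d,k}\subseteq\mathcal{L}^{\mathrm{im}}_{d,\leq k,m_0}$, independent of $\alpha$. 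Your last sentence---citing the uniform form of Chen--Chow in Friz--Victoir directly---is a valid shortcut and suffices. The Noetherian\,/\,algebraic-group detour preceding it, however, has a genuine gap: Chevalley's theorem on constructible images and the identity $U\cdot U^{-1}=G$ for a dense constructible $U\subseteq G$ are results over algebraically closed fields, whereas you are working with $\R$-points. Images of real polynomial maps are only semialgebraic, not Zariski-constructible in general, and the dense-subset trick can fail over $\R$ (take $U=(0,\infty)$ inside $G=\R^\times$). So that portion of the argument does not go through as written.
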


The key idea is to reduce the problem to a single path recovery. For this, we introduce a certain path decomposition of the axis path. Note that $B_{d,k}(\alpha)$ is invariant under permuting the entries of the argument $\alpha$, using \Cref{lem:bary_idempotent}. However, since we want to restrict to a single path, the following construction depends on the order of entries in $\alpha$. Therefore, we move to the set of \emph{compositions} for $m\in\N$, denoted by $\mathcal{C}_m$.

\begin{definition}\label{def:canonicalAxisSubpath}
Let $E^j:[0,1]\rightarrow\R^m,t\mapsto t\mathrm{e}_j$ denote the $j$-th \emph{canonical segment path} in $\R^m$. For any $\alpha\in\mathcal{C}_{m}$ of length $N$ 
    and $i \in[N]$ we define the $i$-th \emph{axis subpath}
    via path concatenation of $\alpha_i$ canonical segment paths,  
    \begin{equation}\label{eq:defAxSub}\mathsf{Ax}^{\alpha,i}:=E^{\alpha_1+\dots+\alpha_{i-1}+1}\star\dots\star E^{\alpha_1+\dots+\alpha_{i}}:[0,1]\rightarrow\R^m.\end{equation}
\end{definition}

\begin{lemma}\label{lem:axis_dec}For all $\alpha\in\mathcal{C}_{m}$ of length $N$, 
\begin{enumerate}[i)]
\item $\mathsf{Ax}^{\alpha,i}$ is a subpath of $\axis^{m}$, and  
\item $\axis^{m}=\mathsf{Ax}^{\alpha,1}\star\dots\star\mathsf{Ax}^{\alpha,N}$. 
\end{enumerate}
\end{lemma}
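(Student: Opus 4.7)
My plan is to verify both claims by unfolding the definitions and comparing the two paths directly, working up to reparametrization. This is harmless for all downstream use of this decomposition, which proceeds through signatures (invariant under reparametrization) and ultimately through Chen's identity.

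I would start with (ii). By construction, $\axis^{m}$ is piecewise linear with $m$ unit segments, and on the interval $[(j-1)/m,\,j/m]$ it moves in direction $\mathrm{e}_j$ starting from the cumulative point $\sum_{\ell<j}\mathrm{e}_\ell$. On the other hand, by \Cref{def:canonicalAxisSubpath} and the concatenation formula \eqref{eq:pathconc}, each axis subpath $\mathsf{Ax}^{\alpha,i}$ is, up to reparametrization, piecewise linear with $\alpha_i$ unit segments in directions $\mathrm{e}_{\alpha_1+\cdots+\alpha_{i-1}+1},\dots,\mathrm{e}_{\alpha_1+\cdots+\alpha_i}$, starting at the origin: the canonical segment paths $E^j$ all start at $0$, and the ``$X(1)+Y(2t-1)$'' clause in \eqref{eq:pathconc} implements exactly the cumulative placement of consecutive segments. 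Concatenating these pieces in order and using $\alpha_1+\cdots+\alpha_N=m$, the same clause chains them into a single piecewise linear path tracing unit segments in directions $\mathrm{e}_1,\dots,\mathrm{e}_m$ in order from the origin, which is exactly $\axis^m$ up to reparametrization.

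Claim (i) then follows immediately from (ii): since $\mathsf{Ax}^{\alpha,i}$ appears as a concatenation factor of $\axis^m$, its image is a contiguous linear piece of the graph of $\axis^m$ (translated so as to start at the origin, in line with the convention for subpaths). One can also see (i) directly: the restriction of $\axis^m$ to the subinterval $[\tfrac{\alpha_1+\cdots+\alpha_{i-1}}{m},\,\tfrac{\alpha_1+\cdots+\alpha_i}{m}]$ is, after translation by $-\sum_{\ell\leq\alpha_1+\cdots+\alpha_{i-1}}\mathrm{e}_\ell$ and affine reparametrization to $[0,1]$, precisely the concatenation $E^{\alpha_1+\cdots+\alpha_{i-1}+1}\star\cdots\star E^{\alpha_1+\cdots+\alpha_i}$.

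The only (mild) technical point is that the concatenation $\star$ from \eqref{eq:pathconc} is not strictly associative as a parametrized path: different parenthesizations of an $N$-fold product produce different time parametrizations. This is why the equality in (ii) must be read up to reparametrization. It is, however, harmless in subsequent sections, since this decomposition will be used only via Chen's identity (\Cref{thm:Chen}) to convert it into the strictly associative product $\sigma(\axis^m)=\sigma(\mathsf{Ax}^{\alpha,1})\cdots\sigma(\mathsf{Ax}^{\alpha,N})$ in $\mathcal{G}_{d,k}$.
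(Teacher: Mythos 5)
Your proof is correct. The paper in fact gives no proof of \Cref{lem:axis_dec} at all --- the lemma is treated as immediate from \Cref{def:canonicalAxisSubpath} and merely illustrated by \Cref{fig:axis3_compositions} --- so your direct unfolding of the definitions is exactly the intended argument. Your caveat that the $N$-fold concatenation built from \eqref{eq:pathconc} reproduces $\axis^m$ only up to time reparametrization is a genuine (if mild) point that the statement of the lemma glosses over; the paper acknowledges it only implicitly in the proof of \Cref{prop:well_def_bary_order}, where invariance of signatures under reparametrization is invoked, which matches your observation that the discrepancy is harmless for everything downstream via \Cref{thm:Chen}.
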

We illustrate all axis subpaths for $m=3$ in \Cref{fig:axis3_compositions}. 

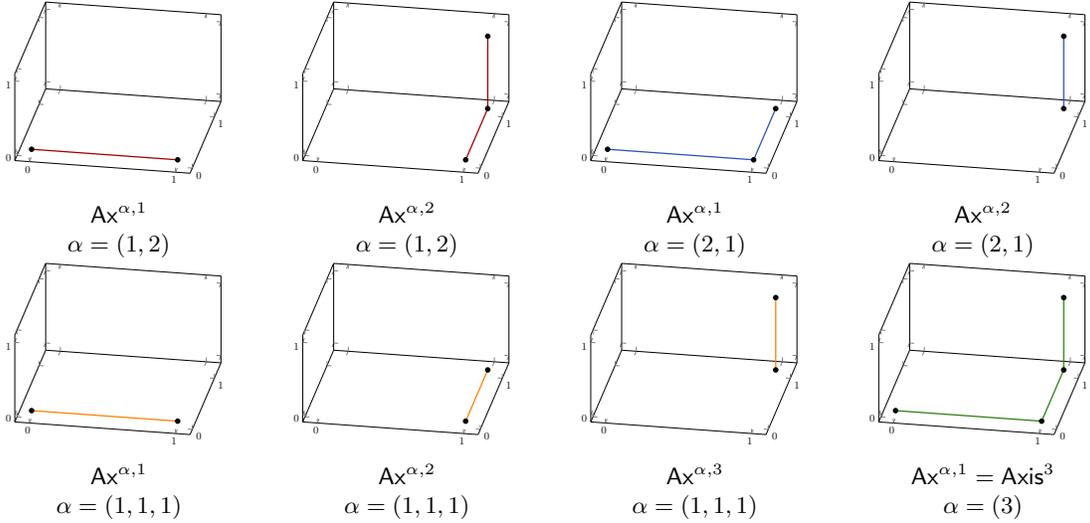
\begin{figure}[h]
    \centering
    \begin{subfigure}{0.24\textwidth}
        \centering
\begin{tikzpicture}[scale=0.4]
\begin{axis}[
    view={10}{40},              
    xtick={0,1},
    ytick={0,1},
    ztick={0,1},
    xmin=-0.1, xmax=1.1,             
    ymin=-0.1, ymax=1.3,
    zmin=-0.1, zmax=1.1,
]

    \addplot3[
        very thick, darkred,                
        mark=*,
        mark options={black} 
    ]
    coordinates {
        (0,0,0)
        (1,0,0)
    };

\end{axis}
\end{tikzpicture}
\caption*{$\mathsf{Ax}^{\alpha,1}$\\ 
$\alpha=(1,2)$}
\end{subfigure}
\hfill
        \begin{subfigure}{0.24\textwidth}
        \centering
\begin{tikzpicture}[scale=0.4]
\begin{axis}[
    view={10}{40},              
    xtick={0,1},
    ytick={0,1},
    ztick={0,1},
    xmin=-0.1, xmax=1.1,             
    ymin=-0.1, ymax=1.3,
    zmin=-0.1, zmax=1.1,
]

    \addplot3[
        very thick, darkred,                
        mark=*,
        mark options={black} 
    ]
    coordinates {
        (1,0,0)
        (1,1,0)
        (1,1,1)
    };

\end{axis}
\end{tikzpicture}
\caption*{$\mathsf{Ax}^{\alpha,2}$\\ 
$\alpha=(1,2)$}
\end{subfigure}
\hfill
\begin{subfigure}{0.24\textwidth}
        \centering
\begin{tikzpicture}[scale=0.4]
\begin{axis}[
    view={10}{40},              
    xtick={0,1},
    ytick={0,1},
    ztick={0,1},
    xmin=-0.1, xmax=1.1,             
    ymin=-0.1, ymax=1.3,
    zmin=-0.1, zmax=1.1,
]

    \addplot3[
        very thick, CeruleanBlue,                
        mark=*,
        mark options={black} 
    ]
    coordinates {
        (0,0,0)
        (1,0,0)
        (1,1,0)
    };

\end{axis}
\end{tikzpicture}
\caption*{$\mathsf{Ax}^{\alpha,1}$\\ 
$\alpha=(2,1)$}
\end{subfigure}
\hfill
\begin{subfigure}{0.24\textwidth}
        \centering
\begin{tikzpicture}[scale=0.4]
\begin{axis}[
    view={10}{40},              
    xtick={0,1},
    ytick={0,1},
    ztick={0,1},
    xmin=-0.1, xmax=1.1,             
    ymin=-0.1, ymax=1.3,
    zmin=-0.1, zmax=1.1,
]

    \addplot3[
        very thick, CeruleanBlue,                
        mark=*,
        mark options={black} 
    ]
    coordinates {
        (1,1,0)
        (1,1,1)
    };

\end{axis}
\end{tikzpicture}
\caption*{$\mathsf{Ax}^{\alpha,2}$\\ 
$\alpha=(2,1)$}
\end{subfigure}
    \begin{subfigure}{0.24\textwidth}
        \centering
\begin{tikzpicture}[scale=0.4]
\begin{axis}[
    view={10}{40},              
    xtick={0,1},
    ytick={0,1},
    ztick={0,1},
    xmin=-0.1, xmax=1.1,             
    ymin=-0.1, ymax=1.3,
    zmin=-0.1, zmax=1.1,
]

    \addplot3[
        very thick, orange,                
        mark=*,
        mark options={black} 
    ]
    coordinates {
        (0,0,0)
        (1,0,0)
    };

\end{axis}
\end{tikzpicture}
\caption*{$\mathsf{Ax}^{\alpha,1}$\\ 
$\alpha=(1,1,1)$}
\end{subfigure}
\hfill
        \begin{subfigure}{0.24\textwidth}
        \centering
\begin{tikzpicture}[scale=0.4]
\begin{axis}[
    view={10}{40},              
    xtick={0,1},
    ytick={0,1},
    ztick={0,1},
    xmin=-0.1, xmax=1.1,             
    ymin=-0.1, ymax=1.3,
    zmin=-0.1, zmax=1.1,
]

    \addplot3[
        very thick, orange,                
        mark=*,
        mark options={black} 
    ]
    coordinates {
        (1,0,0)
        (1,1,0)
    };

\end{axis}
\end{tikzpicture}
\caption*{$\mathsf{Ax}^{\alpha,2}$\\ 
$\alpha=(1,1,1)$}
\end{subfigure}
\hfill
\begin{subfigure}{0.24\textwidth}
        \centering
\begin{tikzpicture}[scale=0.4]
\begin{axis}[
    view={10}{40},              
    xtick={0,1},
    ytick={0,1},
    ztick={0,1},
    xmin=-0.1, xmax=1.1,             
    ymin=-0.1, ymax=1.3,
    zmin=-0.1, zmax=1.1,
]

    \addplot3[
        very thick, orange,                
        mark=*,
        mark options={black} 
    ]
    coordinates {
        (1,1,0)
        (1,1,1)
    };

\end{axis}
\end{tikzpicture}
\caption*{$\mathsf{Ax}^{\alpha,3}$\\ 
$\alpha=(1,1,1)$}
\end{subfigure}
\hfill
\begin{subfigure}{0.24\textwidth}
        \centering
\begin{tikzpicture}[scale=0.4]
\begin{axis}[
    view={10}{40},              
    xtick={0,1},
    ytick={0,1},
    ztick={0,1},
    xmin=-0.1, xmax=1.1,             
    ymin=-0.1, ymax=1.3,
    zmin=-0.1, zmax=1.1,
]

    \addplot3[
        very thick, myGreen,                
        mark=*,
        mark options={black} 
    ]
    coordinates {
        (0,0,0)
        (1,0,0)
        (1,1,0)
        (1,1,1)
    };

\end{axis}
\end{tikzpicture}
\caption*{$\mathsf{Ax}^{\alpha,1}=\axis^3$\\ 
$\alpha=(3)$}
\end{subfigure}
\caption{All axis subpaths $\mathsf{Ax}^{\alpha,i}$ of $\axis^3$ with respect to $\alpha\in\mathcal{C}_3$.}
\label{fig:axis3_compositions}
\end{figure}

\begin{proof}[Proof of \Cref{prop:well_def_bary_order}]
We provide an upper bound of $B_{d,k}(\alpha)$.
Let $m:=\alpha_1+\dots+\alpha_N$. By definition, there exists for every  $\mathbf{z}_i\in\mathcal{L}_{d,\leq k,\alpha_i}^{\mathsf{im}}$ an  $A_i\in\R^{d\times\alpha_i}$ such that $\mathbf{z}_i=\sigma(A_i\cdot\axis^{\alpha_i})$. We align these transformations column-wise, 
$$A:=\begin{bmatrix}A_1&\dots&A_N\end{bmatrix}\in\R^{d\times m}$$
and obtain 
$\mathbf{z}_i=\sigma(A\cdot\mathsf{Ax}^{\alpha,i})$ 
for every $i$, due to the invariance of signatures with respect to time reparametrization, \cite[Proposition~7.10]{Friz_Victoir_2010}. Therefore, 
\begin{align}\label{eq:equivarianceRed2OneRec}
\mathsf{bary}\left(
\mathbf{z}_1,\dots,\mathbf{z}_N\right)
&=
\mathsf{bary}\left(
\sigma(A\cdot\mathsf{Ax}^{\alpha,1}),\dots,\sigma(A\cdot\mathsf{Ax}^{\alpha,N})\right) \\
&=
A\cdot\mathsf{bary}\left(
\sigma(\mathsf{Ax}^{\alpha,1}),\dots,\sigma(\mathsf{Ax}^{\alpha,N})\right)\nonumber\end{align}
since the barycenter is a polynomial map (\Cref{lem:baryIsNonComPolyMap}) and both the signature and the evaluation of non-commutative polynomials is equivariant (\Cref{lem:evalEquivariant,lem:equivariance_signature}). Therefore, it suffices to recover one path from the Lie group element 
\begin{equation}\label{eq:mainTask}\mathsf{bary}\left(
\sigma(\mathsf{Ax}^{\alpha,1}),\dots,\sigma(\mathsf{Ax}^{\alpha,N})\right)=\sigma(U\cdot\axis^{u})\end{equation}
with $u\in\N$ and $U\in\R^{d\times u}$ chosen accordingly. For this last recovery, we used Chen-Chow's theorem, which we recalled in \Cref{thm:ChenChow}. Combining \eqref{eq:mainTask} and \eqref{eq:equivarianceRed2OneRec} we have $\bary(\mathbf{z}_1,\dots,\mathbf{z}_n)=\sigma(AU\cdot\axis^{u})$, where $u$ depends on the dimension $d$, the truncation level $k$ and the composition $\alpha$, but not on the explicit choice of $\mathbf{z}_i$. In particular, we conclude $B_{d,k}(\alpha)\leq u$. 
\end{proof}

\section{Matrix truncation}\label{sec:matrix_case}

In this section, we determine the barycenter recovery order $B_{d,k}(\alpha)$ for quadratic truncation level $k=2$. First, we show that the minimal recovered path for barycenters of segments is always a segment. For an illustration, see \Cref{fig:Bdkalpha}. 

\begin{proposition}\label{prop_B1111_k2}
For all $d$ and $N$, 
$$B_{d,2}(1,\dots,1)=1.$$
\end{proposition}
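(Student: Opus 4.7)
The plan is to use the closed-form expression from \Cref{closed_formula_bary_k2} and show that when the samples are signatures of single-segment paths, the barycenter formula collapses to the signature of a single-segment path whose direction is the mean of the input directions. Since every $1$-segment path in $\R^d$ starting at the origin has the form $X(t)=tv$ for some $v\in\R^d$, its $2$-truncated signature is simply
\begin{equation*}
\sigma(X)=1\boldsymbol{\oplus}v\boldsymbol{\oplus}\tfrac{1}{2}v\otimes v.
\end{equation*}

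First I would fix $N$ single-segment paths $X_i(t)=tv_i$ with $v_i\in\R^d$, set $\mathbf{x}_i:=\sigma(X_i)$ so that $\mathbf{x}_i^{(1)}=v_i$ and $\mathbf{x}_i^{(2)}=\tfrac{1}{2}v_i\otimes v_i$, and plug these into the formula of \Cref{closed_formula_bary_k2}. The level-$1$ component is immediately $\bar v:=\tfrac{1}{N}\sum_{i=1}^N v_i$. The level-$2$ component becomes
\begin{equation*}
\frac{1}{N}\sum_{i=1}^N\tfrac{1}{2}v_i\otimes v_i-\frac{1}{2N}\sum_{i=1}^N v_i\otimes v_i+\frac{1}{2N^2}\sum_{i_1,i_2=1}^N v_{i_1}\otimes v_{i_2}.
\end{equation*}
The first two sums cancel exactly, and the remaining double sum factors as $\tfrac{1}{2N^2}\left(\sum_{i_1}v_{i_1}\right)\otimes\left(\sum_{i_2}v_{i_2}\right)=\tfrac{1}{2}\bar v\otimes\bar v$.

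Therefore $\bary(\sigma(X_1),\dots,\sigma(X_N))=1\boldsymbol{\oplus}\bar v\boldsymbol{\oplus}\tfrac{1}{2}\bar v\otimes\bar v=\sigma(Y)$, where $Y(t):=t\bar v$ is a single-segment path. This shows $B_{d,2}(1,\dots,1)\leq 1$, and the reverse inequality is trivial (a $0$-segment path would have trivial signature, which generically differs from the barycenter). I do not anticipate a main obstacle here: the only substantive step is the algebraic cancellation in the second-tensor component, which is immediate once the closed-form expression is invoked. The proof illustrates why $k=2$ is especially rigid—the factorization $\sum_{i_1,i_2}v_{i_1}\otimes v_{i_2}=(\sum_i v_i)\otimes(\sum_i v_i)$ is what forces the barycenter back onto a single segment, a phenomenon that breaks at higher truncation levels (compare the $k=3$ case illustrated in \Cref{fig:Bdkalpha}).
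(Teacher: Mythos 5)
Your proof is correct, but it takes a different route from the paper's. The paper argues entirely in log-coordinates: it invokes the characterization that signatures of segments are exactly the group elements whose log-signature is concentrated in level $1$ (citing \cite{FLS24,AGOSS23}), notes that for $k=2$ the barycenter factors through the asymmetrized BCH polynomial $f=\frac1N\sum_i(s_i^{(1)}+s_i^{(2)})$ as in \Cref{rem:polynomialViaBCH}, i.e.\ is the plain arithmetic mean in $\mathfrak{g}_{d,2}$, and concludes that the mean of level-$1$-concentrated elements is again level-$1$-concentrated. You instead work at the group level, substituting $\mathbf{x}_i = 1\boldsymbol{\oplus}v_i\boldsymbol{\oplus}\tfrac12 v_i\otimes v_i$ into the closed form of \Cref{closed_formula_bary_k2} and observing the cancellation of the first two level-$2$ sums together with the factorization $\sum_{i_1,i_2}v_{i_1}\otimes v_{i_2}=\bigl(\sum_i v_i\bigr)\otimes\bigl(\sum_i v_i\bigr)$. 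The two arguments are equivalent in substance (your cancellation is precisely the statement that $\log$ kills the level-$2$ part), but yours is self-contained: it needs only the easy direction that a segment's signature has the stated form and that $1\boldsymbol{\oplus}\bar v\boldsymbol{\oplus}\tfrac12\bar v\otimes\bar v=\sigma(t\mapsto t\bar v)$, whereas the paper's one-line proof leans on the external characterization of segment signatures. The paper's version has the advantage of making clear \emph{why} the statement is special to $k=2$ (the aBCH polynomial degenerates to the arithmetic mean), which you also note at the end. One cosmetic point: the lower bound $B_{d,2}(1,\dots,1)\geq 1$ holds simply because the minimum in the definition ranges over $m\geq 1$; your aside about $0$-segment paths is unnecessary.
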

\begin{proof}
    We know that signatures of segments are precisely those where the log-signature is nonzero only at its first level; see \cite[Theorem 1.4]{FLS24} or \cite[Corollary 7.3]{AGOSS23}.  
If $k=2$ we can use the asymmetrized Baker--Campbell--Hausdorff formula and choose $f$ according to \eqref{eq:aBCHk2} in \Cref{rem:polynomialViaBCH}. Then, 
with \Cref{lem:baryIsNonComPolyMap}, 
$$\log\left(\bary\left(\left(\mathcal{L}_{d,\leq2,1}^{\mathrm{im}}\right)^N\right)\right)=\eval_f\left(\left(\log\left(\mathcal{L}_{d,\leq2,1}^{\mathrm{im}}\right)\right)^N\right)$$ is non-trivial only at its first level.
\end{proof}

We state our next main result, a formula for the barycenter recovery order in the case of quadratic truncation $k=2$.

\begin{theorem}\label{thm:main_matrix}
For all $\alpha\in\N^N$ and $d\geq 2$, 
$$B_{d,2}(\alpha)=\begin{cases}
\min(d,\alpha_+)&\text{all }\alpha_i\text{ even,}\\
\min(d,\alpha_+-\#_\alpha^{\mathsf{odd}}+1)&\text{otherwise,}
\end{cases}
$$
where $\alpha_+$ denotes the sum of all entries, and $\#_\alpha^{\mathsf{odd}}$ is the number of odd entries in $\alpha$.
\end{theorem}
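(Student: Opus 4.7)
The plan is to use \Cref{closed_formula_bary_k2} to split the recovery problem into its two essential pieces: the endpoint vector $v\in\R^d$ and the signed-area (skew-symmetric) matrix $M\in\R^{d\times d}$, since by the shuffle relation every element $\mathbf{s}\in\G_{d,2}$ is determined by $\mathbf{s}^{(1)}$ together with the antisymmetric part of $\mathbf{s}^{(2)}$. First, I would write each sample path as $X_i=A_i\cdot\axis^{\alpha_i}$ with $A_i\in\R^{d\times\alpha_i}$ (\Cref{lem:equivariance_signature}) and substitute into \Cref{closed_formula_bary_k2}, obtaining $v=\tfrac{1}{N}\sum_i A_i\mathbf{1}_{\alpha_i}$ and antisymmetric part $M=\tfrac{1}{2N}\sum_i A_i\Lambda_{\alpha_i}A_i^T$, where $\Lambda_m$ denotes the $m\times m$ matrix with $+1$ above and $-1$ below the diagonal; the quadratic corrections in the barycenter formula collapse into the symmetric piece $\tfrac12 vv^T$ by shuffle.

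The next step is the main linear-algebra input: $\operatorname{rank}(\Lambda_m)=m$ for $m$ even and $m-1$ for $m$ odd, via a direct Gaussian elimination or Pfaffian computation. By subadditivity of rank this gives $\operatorname{rank}(M)\le\min(d,\alpha_+-\#_\alpha^{\mathsf{odd}})$. From equivariance of the signature the recovery criterion reads: $(v,M)$ lies in $\mathcal{L}^{\mathsf{im}}_{d,\le 2,m}$ precisely when there exists $B\in\R^{d\times m}$ with $B\mathbf{1}_m=v$ and $\tfrac12 B\Lambda_m B^T=M$.

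For the upper bound I would follow the reduction to single-path recovery in the proof of \Cref{prop:well_def_bary_order}: concatenating $A=[A_1,\ldots,A_N]\in\R^{d\times\alpha_+}$ pulls the problem back to recovering one Lie-group element in $\G_{\alpha_+,2}$, and an explicit congruence identifying $\Lambda_{\alpha_1}\oplus\cdots\oplus\Lambda_{\alpha_N}$ with $\Lambda_m$ for $m=B_{d,2}(\alpha)$ yields the required matrix $B$. The two cases split naturally here: when all $\alpha_i$ are even, the blocks $\Lambda_{\alpha_i}$ glue without parity defect into a rank-$\alpha_+$ skew-symmetric form; when some $\alpha_i$ is odd, each odd block contributes a one-dimensional kernel, and exactly one extra column must be inserted to reconcile these kernels with the affine constraint $B\mathbf{1}_m=v$, yielding the $+1$. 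The lower bound is obtained by choosing $A_i$'s whose columns jointly span a subspace of maximal dimension compatible with the parity constraints, so that $\operatorname{rank}(M)$ saturates $\min(d,\alpha_+-\#_\alpha^{\mathsf{odd}})$, and then showing directly that fewer segments do not suffice.

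The main obstacle, and the step I expect to be most delicate, is controlling the interplay between the congruence orbit of $\Lambda_m$ and the affine constraint $B\mathbf{1}_m=v$. The stabilizer of $\Lambda_m$ in $\mathrm{GL}_m(\R)$ is essentially the symplectic group (with an additional kernel direction when $m$ is odd), and one must determine when its action moves $\mathbf{1}_m$ to a vector compatible with a given $v$ lying in the column span of a realizing $B$. Tracking this dimension count precisely is what distinguishes the all-even from the otherwise branch of the formula, and is where the final $-\#_\alpha^{\mathsf{odd}}+1$ correction arises.
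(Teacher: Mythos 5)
Your setup is correct and runs parallel to the paper's: the reduction to a single recovery by concatenating the $A_i$, the explicit level-two barycenter from \Cref{closed_formula_bary_k2}, and the identification of the skew part as (a congruence image of) $\frac1{2N}\bigoplus_i(\mathrm{U}_{\alpha_i}-\mathrm{U}_{\alpha_i}^{\top})$ all match \Cref{lem:barycenterMatrixWmFormalua}. The genuine gap is the lower bound in the ``otherwise'' case. You propose to obtain it by choosing the $A_i$ so that $\operatorname{rank}(M)$ saturates $\min(d,\alpha_+-\#_\alpha^{\mathsf{odd}})$ and ``then showing directly that fewer segments do not suffice'' --- but the answer is $\alpha_+-\#_\alpha^{\mathsf{odd}}+1$, and the maximal skew rank $\alpha_+-\#_\alpha^{\mathsf{odd}}$ is an \emph{even} number $2t$ that a path with $u=2t$ segments can perfectly well realize at the level of skew forms. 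So no bound on $\operatorname{rank}(M)$ alone can exclude $u=2t$; the obstruction sits exactly in the ``interplay with the affine constraint'' that you flag as delicate and leave open. Concretely: for $u$ even, $\mathrm{U}_u-\mathrm{U}_u^{\top}$ is invertible (\Cref{cor:UdmUdT_invertible}), so any realization $B\in\R^{d\times u}$ of a rank-$u$ skew part forces $v=B\mathbbm{1}_u\in\operatorname{col}(B)=\operatorname{col}(M)$; whereas for the barycenter with an odd block, $\ker(\mathrm{U}_{\alpha_i}-\mathrm{U}_{\alpha_i}^{\top})$ contains an alternating vector that is not orthogonal to $\mathbbm{1}$, hence $v\notin\operatorname{col}(M)$ for generic $A$ --- contradiction. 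Without this (or an equivalent) argument the $+1$ is unproved, and it is precisely the case split of the theorem.

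The paper sidesteps this bookkeeping by never separating the skew part: it works with the full matrix $W_\alpha=W_\alpha^{\mathsf{skew}}+\frac1{2N^2}\mathbbm{1}_{m\times m}$ and its congruence canonical form (\Cref{lem:nf_Wm}, via cosquares and the Horn--Sergeichuk classification). Since any $u$-segment realization has second level $BC_uB^{\top}$ of rank at most $u$, the lower bound is just $\operatorname{rank}(W_\alpha)$, and the rank-one symmetric update supplies the $+1$ for free exactly because $\mathbbm{1}_m\notin\operatorname{col}(W_\alpha^{\mathsf{skew}})$ when some $\alpha_i$ is odd. For the upper bound, note also that your ``explicit congruence identifying $\bigoplus_i\Lambda_{\alpha_i}$ with $\Lambda_m$'' must simultaneously carry $\mathbbm{1}$ to the correct vector level --- the paper constructs such transforms explicitly and verifies the eigenvector condition --- so this compatibility is asserted in your plan but not exhibited. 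In short: right skeleton and a correct diagnosis of where the difficulty lies, but the decisive step, and with it the $+1$, is missing.
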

We provide a proof in the remainder of this section. Note that \Cref{prop_B1111_k2} is a special case of \Cref{thm:main_matrix}. 
With \cite[Theorem 3.4]{bib:AFS2019} we immediately see that $B_{d,2}(\alpha)\leq d$. 
For the other bound dependent on $\alpha$, we enter the realm of matrix \emph{congruence normal forms}.  
From \cite[Equations (1) and (3)]{HORN20061010} we recall the canonical $2\times 2$ matrices 
\begin{equation} 
\Gamma_2:=
\begin{bmatrix}
0 & -1\\
1 & 1
\end{bmatrix}\quad\text{ and }\quad
\mathrm{H}_2(-1):=
\begin{bmatrix}
0 & 1\\
-1 & 0
\end{bmatrix}
\end{equation}
as the building blocks for our normal forms. Let $0_{c\times d}$ be the zero and $
\mathbbm{1}_{c\times d}$ the constant $1$ matrix in $\R^{c\times d}$. Furthermore, let $\mathrm{E}_{ij}$ be the matrix which is $1$ in the index $(i,j)$, and $0$ elsewhere.
Let $\mathrm{I}_m$ be the $m\times m$ identity matrix, and  
\begin{equation} 
\mathrm{U}_m:=\sum_{i=1}^m\sum_{j=i+1}^m\mathrm{E}_{ij}=
\begin{bsmallmatrix}
0 & 1 & \dots& 1\\
\vdots & \ddots & \ddots& \vdots\\
 &  & &1\\
0 & \dots &&0
\end{bsmallmatrix}
\end{equation}
the strictly upper triangular matrix with constant $1$ above the diagonal. We also use the notation 
$$M\oplus W:=
\begin{bmatrix}
M&0_{m\times w}\\
0_{w\times m}&W
\end{bmatrix}\in\R^{(m+w)\times(m+w)}
$$
for the block diagonal matrix of $M\in\R^{m\times m}$ and $W\in\R^{w\times w}$. Finally, let 
\begin{equation}\label{eq:defQm}\mathrm{Q}_m:=\bigoplus_{1\leq i\leq m}(-1)^{i+1}
 \end{equation}
 be the $m\times m$ matrix with alternating signs on its diagonal. Two matrices $M,V\in\R^{m\times m}$ are \emph{congruent}, denoted by $M\sim V$, if there exists an invertible $P\in\R^{m\times m}$ such that $PMP^\top=V$. 
We start with a preliminary consideration, an explicit congruence normal form according to \cite[Theorem 1]{HORN20061010}.

\begin{lemma}\label{lem:UmUT_nf}
For all $m$,
  $$\mathrm{U}_m-\mathrm{U}_m^\top\;\sim\;
  \begin{cases}
      \mathrm{H}_2(-1)^{\,\oplus \frac m2}& m\text{ even,}\\
     \mathrm{H}_2(-1)^{\,\oplus \frac {m-1}2}\oplus 0 & m\text{ odd.}
  \end{cases}
  $$
\end{lemma}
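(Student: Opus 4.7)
The plan is to establish the result by induction on $m$, reducing to the two base cases $m = 0$ (empty matrix, vacuous) and $m = 1$ (where $\mathrm{U}_1 - \mathrm{U}_1^\top = 0$). Writing $S_m := \mathrm{U}_m - \mathrm{U}_m^\top$, I observe that its entries are $(S_m)_{ij} = \operatorname{sign}(j-i)$, so the matrix is skew-symmetric and, crucially, every row $i \geq 3$ starts with $(-1, -1, \ldots)$ in its first two slots, while the top-left $2 \times 2$ block is already $\mathrm{H}_2(-1)$. The inductive step will produce an explicit congruence $P_m S_m P_m^\top = \mathrm{H}_2(-1) \oplus S_{m-2}$, and iterating this peels off $\lfloor m/2 \rfloor$ copies of $\mathrm{H}_2(-1)$ while leaving $S_1 = 0$ in the odd case.

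For the congruence matrix I would take
$$P_m := \mathrm{I}_m + \sum_{i=3}^m \bigl(\mathrm{E}_{i1} - \mathrm{E}_{i2}\bigr),$$
which is unit lower-triangular and thus invertible. It implements the symmetric row/column operation ``replace (row and column) $i$ by itself plus (row and column) $1$ minus (row and column) $2$'' for each $i \geq 3$. The coefficients $+1$ and $-1$ are chosen precisely so that $(S_m)_{i1} + (S_m)_{11} - (S_m)_{21}$ and $(S_m)_{i2} + (S_m)_{12} - (S_m)_{22}$ both vanish whenever $i \geq 3$; combined with the symmetric column operations, this cleanly decouples the top-left $\mathrm{H}_2(-1)$ block from the rest, and leaves rows/columns $1$ and $2$ otherwise untouched.

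The one calculation that actually needs to be carried out is verifying that the bottom-right $(m-2) \times (m-2)$ block of $P_m S_m P_m^\top$ equals $S_{m-2}$. Expanding, the correction at position $(i,j)$ with $i,j \geq 3$ is
$$
\bigl((S_m)_{i1} - (S_m)_{i2}\bigr) + \bigl((S_m)_{1j} - (S_m)_{2j}\bigr) + \bigl((S_m)_{11} - (S_m)_{12} - (S_m)_{21} + (S_m)_{22}\bigr),
$$
and each parenthesized group evaluates to zero using $(S_m)_{i1} = (S_m)_{i2} = -1$, $(S_m)_{1j} = (S_m)_{2j} = 1$, $(S_m)_{12} = 1$, $(S_m)_{21} = -1$, and vanishing diagonal. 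Induction then yields $\mathrm{H}_2(-1)^{\oplus m/2}$ for even $m$ and $\mathrm{H}_2(-1)^{\oplus (m-1)/2} \oplus 0$ for odd $m$.

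The main obstacle here is really just spotting the right $P_m$; once it is guessed, everything else is routine bookkeeping of signs. A less explicit alternative would invoke the classical Darboux-type congruence normal form for real skew-symmetric matrices and reduce the claim to computing $\operatorname{rank}(S_m)$, which one can do by rewriting $(S_m x)_i$ in terms of the partial sums $T_i := x_1 + \cdots + x_i$ as $T_m - T_i - T_{i-1}$ and solving the resulting alternating recursion; however, the constructive route has the advantage of producing the congruence $P_m$ explicitly, which dovetails with the computational flavor of the rest of \Cref{sec:matrix_case}.
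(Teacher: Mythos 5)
Your proof is correct, and it is essentially the paper's argument: your congruence matrix $P_m=\mathrm{I}_m+\sum_{i=3}^m(\mathrm{E}_{i1}-\mathrm{E}_{i2})$ is exactly the paper's first transformation $P_1$, and the paper simply composes the analogous matrices $P_1,\dots,P_{\lfloor (m-1)/2\rfloor}$ into one product where you phrase the same peeling-off of $\mathrm{H}_2(-1)$ blocks as an induction on $m$. The sign verifications you carry out are the same routine checks the paper leaves implicit.
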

\begin{proof}
     For the base cases, we have equality, that is $\mathrm{U}_1-\mathrm{U}_1^\top=0$ and 
   $\mathrm{U}_2-\mathrm{U}_2^\top=\mathrm{H}_2(-1)$.
     Recursively, for all $1\leq i\leq \frac{m-1}2$, 
    \begin{align*}
&P_i\left(\mathrm{H}_2(-1)^{\oplus(i-1)}\oplus\left(\mathrm{U}_{m-2(i-1)}-\mathrm{U}_{m-2(i-1)}^\top\right)\right)P_i^\top
    =\mathrm{H}_2(-1)^{\oplus i}\oplus\left(\mathrm{U}_{m-2i}-\mathrm{U}_{m-2i}^\top\right)
\end{align*}
    where 
    $$P_i:=
    \mathrm{I}_m+\sum_{j=2i+1}^m \left(\mathrm{E}_{j,2i-1}-\mathrm{E}_{j,2i}\right)
    $$
    adds the $i$-th, and subtracts the $(i+1)$-th row to the $j$-th row for $i+2\leq j\leq m$, respectively. 
    In total, 
    $$P:=P_{\lfloor\frac{m-1}2\rfloor}\dots P_2P_1
    \;=\;
    \mathrm{I}_m+\sum_{i=1}^{\lfloor\frac{m-1}2\rfloor}\sum_{j=2i+1}^m\left(\mathrm{E}_{j,2i-1}-\mathrm{E}_{j,2i}\right)$$
    is the desired transform to the congruence normal form, $P\left(\mathrm{U}_m-\mathrm{U}_m^\top\right)P^\top$. 
\end{proof}

\begin{example}
  If $m=8$ then 
  $$\mathrm{U}_m-\mathrm{U}_m^\top
  =
  \begin{bsmallmatrix}
  0 & 1 & 1 & 1 & 1 & 1 & 1 & 1 \\
-1 & 0 & 1 & 1 & 1 & 1 & 1 & 1 \\
-1 & -1 & 0 & 1 & 1 & 1 & 1 & 1 \\
-1 & -1 & -1 & 0 & 1 & 1 & 1 & 1 \\
-1 & -1 & -1 & -1 & 0 & 1 & 1 & 1 \\
-1 & -1 & -1 & -1 & -1 & 0 & 1 & 1 \\
-1 & -1 & -1 & -1 & -1 & -1 & 0 & 1 \\
-1 & -1 & -1 & -1 & -1 & -1 & -1 & 0
\end{bsmallmatrix}
$$ 
  is congruent to 
  $$\mathrm{H}_2(-1)^{\oplus4}
  =
  \begin{bmatrix}
  0 & 1\\
  -1 & 0 
  \end{bmatrix}\oplus
  \begin{bmatrix}
  0 & 1\\
  -1 & 0 
  \end{bmatrix}\oplus
  \begin{bmatrix}
  0 & 1\\
  -1 & 0 
  \end{bmatrix}
  \oplus
  \begin{bmatrix}
  0 & 1\\
  -1 & 0 
  \end{bmatrix}
  =
  \begin{bsmallmatrix}
  0 & 1 & 0 & 0 & 0 & 0 & 0 & 0 \\
-1 & 0 & 0 & 0 & 0 & 0 & 0 & 0 \\
0 & 0 & 0 & 1 & 0 & 0 & 0 & 0 \\
0 & 0 & -1 & 0 & 0 & 0 & 0 & 0 \\
0 & 0 & 0 & 0 & 0 & 1 & 0 & 0 \\
0 & 0 & 0 & 0 & -1 & 0 & 0 & 0 \\
0 & 0 & 0 & 0 & 0 & 0 & 0 & 1 \\
0 & 0 & 0 & 0 & 0 & 0 & -1 & 0
\end{bsmallmatrix}
  $$ with the transformation $P=P_3P_2P_1$ given by
  $$P=
  \begin{bsmallmatrix}
1 & 0 & 0 & 0 & 0 & 0 & 0 & 0 \\
0 & 1 & 0 & 0 & 0 & 0 & 0 & 0 \\
1 & -1 & 1 & 0 & 0 & 0 & 0 & 0 \\
1 & -1 & 0 & 1 & 0 & 0 & 0 & 0 \\
1 & -1 & 0 & 0 & 1 & 0 & 0 & 0 \\
1 & -1 & 0 & 0 & 0 & 1 & 0 & 0 \\
1 & -1 & 0 & 0 & 0 & 0 & 1 & 0 \\
1 & -1 & 0 & 0 & 0 & 0 & 0 & 1 
  \end{bsmallmatrix}
  \,
   \begin{bsmallmatrix}
1 & 0 & 0 & 0 & 0 & 0 & 0 & 0 \\
0 & 1 & 0 & 0 & 0 & 0 & 0 & 0 \\
0 & 0 & 1 & 0 & 0 & 0 & 0 & 0 \\
0 & 0 & 0 & 1 & 0 & 0 & 0 & 0 \\
0 & 0 & 1 & -1 & 1 & 0 & 0 & 0 \\
0 & 0 & 1 & -1 & 0 & 1 & 0 & 0 \\
0 & 0 & 1 & -1 & 0 & 0 & 1 & 0 \\
0 & 0 & 1 & -1 & 0 & 0 & 0 & 1
  \end{bsmallmatrix}
  \,
  \begin{bsmallmatrix}
1 & 0 & 0 & 0 & 0 & 0 & 0 & 0 \\
0 & 1 & 0 & 0 & 0 & 0 & 0 & 0 \\
0 & 0 & 1 & 0 & 0 & 0 & 0 & 0 \\
0 & 0 & 0 & 1 & 0 & 0 & 0 & 0 \\
0 & 0 & 0 & 0 & 1 & 0 & 0 & 0 \\
0 & 0 & 0 & 0 & 0 & 1 & 0 & 0 \\
0 & 0 & 0 & 0 & 1 & -1 & 1 & 0 \\
0 & 0 & 0 & 0 & 1 & -1 & 0 & 1
  \end{bsmallmatrix} 
  =
  \begin{bsmallmatrix}
1 & 0 & 0 & 0 & 0 & 0 & 0 & 0 \\
0 & 1 & 0 & 0 & 0 & 0 & 0 & 0 \\
1 & -1 & 1 & 0 & 0 & 0 & 0 & 0 \\
1 & -1 & 0 & 1 & 0 & 0 & 0 & 0 \\
1 & -1 & 1 & -1 & 1 & 0 & 0 & 0 \\
1 & -1 & 1 & -1 & 0 & 1 & 0 & 0 \\
1 & -1 & 1 & -1 & 1 & -1 & 1 & 0 \\
1 & -1 & 1 & -1 & 1 & -1 & 0 & 1
\end{bsmallmatrix}
  $$ according to the proof of 
  \Cref{lem:UmUT_nf}. 
\end{example}

\begin{corollary}\label{cor:UdmUdT_invertible}
    If  $m$ is even, then $\mathrm{U}_m-\mathrm{U}_{m}^\top$ is invertible with inverse
    $${\left(\mathrm{U}_m-\mathrm{U}_{m}^\top\right)}^{-1}
    =
    \mathrm{Q}_m\left(\mathrm{U}_m-\mathrm{U}_{m}^\top\right)\mathrm{Q}_m.
    $$
\end{corollary}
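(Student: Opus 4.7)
The plan is to reduce the claim to the single algebraic identity $(AQ)^2=\mathrm{I}_m$, where $A:=\mathrm{U}_m-\mathrm{U}_m^\top$ and $Q:=\mathrm{Q}_m$. Indeed, the asserted formula $A^{-1}=QAQ$ is equivalent to $A\cdot(QAQ)=\mathrm{I}_m$, and since $QAQ=Q^{-1}AQ^{-1}$ (as $Q^2=\mathrm{I}_m$), we have $A\cdot(QAQ)=AQAQ=(AQ)^2$. Thus establishing $(AQ)^2=\mathrm{I}_m$ simultaneously shows that $A$ is invertible and identifies its inverse.

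First I would write down the entries of $AQ$ explicitly. Since $A_{ij}=1$ for $i<j$, $A_{ij}=-1$ for $i>j$, $A_{ii}=0$, and $Q_{jj}=(-1)^{j+1}$, one obtains
\begin{equation*}
(AQ)_{ij}=\begin{cases}(-1)^{j+1}& i<j,\\ (-1)^{j} & i>j,\\ 0 & i=j.\end{cases}
\end{equation*}
Then I would compute the diagonal entry $(AQ)^2_{ii}=\sum_{j\ne i}(AQ)_{ij}(AQ)_{ji}$. A direct check (separating $j<i$ from $j>i$) gives $(AQ)_{ij}(AQ)_{ji}=(-1)^{i+j+1}$ in both ranges, so that
\begin{equation*}
(AQ)^2_{ii}=(-1)^{i+1}\sum_{j\ne i}(-1)^j=(-1)^{i+1}\cdot\bigl(-(-1)^i\bigr)=1,
\end{equation*}
where the evaluation of the inner sum uses $\sum_{j=1}^m(-1)^j=0$, which is exactly the place where the hypothesis \emph{$m$ even} enters.

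For the off-diagonal entries, by antisymmetry of $A$ it suffices to treat the case $i<k$. I would split the sum defining $(AQ)^2_{ik}$ into the three ranges $j<i$, $i<j<k$, and $j>k$, and substitute the expressions above to obtain
\begin{equation*}
(AQ)^2_{ik}=(-1)^k\left[-\sum_{j=1}^{i-1}(-1)^j+\sum_{j=i+1}^{k-1}(-1)^j-\sum_{j=k+1}^{m}(-1)^j\right].
\end{equation*}
Using the closed form $\sum_{j=1}^n(-1)^j=\tfrac{(-1)^n-1}{2}$ to evaluate the three partial sums, a short algebraic simplification shows that the bracket vanishes (this uses $m$ even once more via $(-1)^m=1$). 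The case $i>k$ is handled by the analogous argument with the roles of the three ranges swapped. I expect this off-diagonal computation to be the main obstacle: not conceptually deep, but requiring careful bookkeeping of signs across the three index ranges, together with correct use of the $m$-even hypothesis at two different points in the identity $\sum_{j=1}^m(-1)^j=0$. Everything else is formal manipulation.
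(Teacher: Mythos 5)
Your proof is correct, but it takes a genuinely different route from the paper's. The paper deduces the claim from the congruence normal form of \Cref{lem:UmUT_nf}: with the explicit transform $P$ constructed there one has $P(\mathrm{U}_m-\mathrm{U}_m^\top)P^\top=\mathrm{H}_2(-1)^{\oplus m/2}$, the right-hand side is invertible since $\mathrm{H}_2(-1)^{-1}=\mathrm{H}_2(-1)^\top$, and the formula for the inverse is then read off. You instead verify the single identity $(AQ)^2=\mathrm{I}_m$ entrywise, with $A=\mathrm{U}_m-\mathrm{U}_m^\top$ and $Q=\mathrm{Q}_m$, which (via $Q^2=\mathrm{I}_m$) simultaneously gives invertibility and $A^{-1}=QAQ$. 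I checked the computations: the entries of $AQ$, the diagonal evaluation $\bigl((AQ)^2\bigr)_{ii}=1$, and the vanishing of the bracket for $i<k$ are all correct, and the hypothesis that $m$ is even enters exactly where you say it does — for odd $m$ the bracket equals $1$, consistent with the rank drop in \Cref{lem:UmUT_nf}. Your approach is more elementary and self-contained: it does not use \Cref{lem:UmUT_nf} at all and fully certifies the explicit inverse, whereas the paper's argument is shorter but leans on the previously built $P$ and leaves the identification of $P^\top\bigl(\mathrm{H}_2(-1)^{\oplus m/2}\bigr)^{-1}P$ with $\mathrm{Q}_m(\mathrm{U}_m-\mathrm{U}_m^\top)\mathrm{Q}_m$ implicit. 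One cosmetic point: the reduction of the case $i>k$ to $i<k$ is better justified by the relation $\bigl((AQ)^2\bigr)^\top=Q(AQ)^2Q$ (so the $(k,i)$ entry is a sign times the $(i,k)$ entry) than by ``antisymmetry of $A$'' alone; your fallback of redoing the three-range computation with the ranges swapped also works.
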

\begin{proof}
  We use $\mathrm{H}_2(-1)^{-1}=\mathrm{H}_2(-1)^\top$ and $P(\mathrm{U}_m-\mathrm{U}_m)P^\top=\mathrm{H}_m(-1)^{\oplus\frac m2}$ with $P$ from the proof of \Cref{lem:UmUT_nf}.
\end{proof}

We provide the congruence normal form for the signature matrix of the axis path $\axis^m$. We recall from \cite[Example 2.1]{bib:AFS2019} or \cite[Equation (8)]{bib:PSS2019} the closed-form expression
\begin{equation}\label{eq:sigAxism}
C_m:=\sigma^{(2)}(\axis^m)=\frac12\,\mathrm{I}_m+\mathrm{U}_m
\end{equation}
for the canonical axis path in $m$-dimensional space. 

\begin{lemma}\label{lem:axismatrix_nf}For all $m$, 
    $$C_m
    \sim
    \begin{cases}
    \Gamma_2 \oplus\, \mathrm{H}_2(-1)^{\,\oplus \frac{m-2}{2}}& m\text{ even,}\\
     1\oplus\,\mathrm{H}_2(-1)^{\,\oplus \frac {m-1}2} & m\text{ odd.}
  \end{cases}
    $$
\end{lemma}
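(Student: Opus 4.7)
The plan is to reduce to Lemma \ref{lem:UmUT_nf} by decomposing $C_m$ into its symmetric and skew parts. From $C_m + C_m^\top = \mathrm{I}_m + \mathrm{U}_m + \mathrm{U}_m^\top = \mathbf{1}_m\mathbf{1}_m^\top$, where $\mathbf{1}_m$ is the all-ones vector, we have
$$C_m = \tfrac{1}{2}\mathbf{1}_m\mathbf{1}_m^\top + \tfrac{1}{2}\bigl(\mathrm{U}_m - \mathrm{U}_m^\top\bigr),$$
a rank-one symmetric part plus a skew part; the goal is to simultaneously normalise both.

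First I would reuse the congruence matrix $P$ from the proof of Lemma \ref{lem:UmUT_nf}. Its correction pairs $\mathrm{E}_{j,2i-1} - \mathrm{E}_{j,2i}$ contribute zero to each row sum, so $P\mathbf{1}_m = \mathbf{1}_m$ and the rank-one term is preserved:
$$PC_mP^\top = \tfrac{1}{2}\mathbf{1}_m\mathbf{1}_m^\top + \tfrac{1}{2}B, \qquad B := P(\mathrm{U}_m - \mathrm{U}_m^\top)P^\top,$$
where $B = \mathrm{H}_2(-1)^{\oplus m/2}$ for even $m$ and $B = \mathrm{H}_2(-1)^{\oplus(m-1)/2} \oplus 0$ for odd $m$.

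Next I would apply a congruence $Q$ that fixes $B$ yet moves $\mathbf{1}_m$ to a standard basis vector. For odd $m$, the explicit upper triangular choice $Q = \mathrm{I}_m - \sum_{j<m}\mathrm{E}_{j,m}$ satisfies $QBQ^\top = B$ and $Q\mathbf{1}_m = e_m$. For even $m$, $B$ is a non-degenerate skew form, and its symplectic stabiliser $\mathrm{Sp}(B)$ acts transitively on $\R^m \setminus \{0\}$, so there exists $Q \in \mathrm{Sp}(B)$ with $Q\mathbf{1}_m = e_1$. Either way, the resulting matrix is $\tfrac{1}{2}E_{11} + \tfrac{1}{2}B$ (even) or $\tfrac{1}{2}E_{mm} + \tfrac{1}{2}B$ (odd).

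Finally, rescaling by $\sqrt{2}\,\mathrm{I}_m$ yields $E_{11} + \mathrm{H}_2(-1)^{\oplus m/2}$ in the even case and $\mathrm{H}_2(-1)^{\oplus(m-1)/2} \oplus 1$ in the odd case. In the odd case, a permutation moves the singleton block to the front, producing $1 \oplus \mathrm{H}_2(-1)^{\oplus(m-1)/2}$. In the even case, the top-left $2 \times 2$ block equals $\begin{bsmallmatrix}1 & 1 \\ -1 & 0\end{bsmallmatrix}$, which is sent to $\Gamma_2$ by the swap $\begin{bsmallmatrix}0 & 1 \\ 1 & 0\end{bsmallmatrix} \oplus \mathrm{I}_{m-2}$ without disturbing the remaining $\mathrm{H}_2(-1)$ blocks. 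The main obstacle is producing the symplectic $Q$ in the even case; transitivity of $\mathrm{Sp}$ on non-zero vectors handles it abstractly, but for a self-contained argument one can construct $Q$ explicitly by induction, in parallel to Lemma \ref{lem:UmUT_nf}.
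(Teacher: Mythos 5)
Your proof is correct, but it follows a genuinely different route from the paper. The paper, as in \Cref{lem:UmUT_nf}, simply writes down one explicit transformation matrix $P$ (a product of a $\sqrt{2}$-rescaling, a transposition, and elementary row operations) and asserts that $PC_mP^\top$ is in normal form. You instead split $C_m=\tfrac12\mathbbm{1}_m\mathbbm{1}_m^\top+\tfrac12(\mathrm{U}_m-\mathrm{U}_m^\top)$ into its rank-one symmetric and skew parts, observe that the transform $P$ of \Cref{lem:UmUT_nf} has all row sums equal to $1$ and hence fixes $\mathbbm{1}_m$, and then clean up the rank-one part with a congruence from the stabiliser of the normalised skew form. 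All the individual verifications check out: $P\mathbbm{1}_m=\mathbbm{1}_m$; for odd $m$ your $Q=\mathrm{I}_m-\sum_{j<m}\mathrm{E}_{j,m}$ fixes $B$ because the last row and column of $B$ vanish; and the final $2\times2$ bookkeeping ($S\begin{bsmallmatrix}1&1\\-1&0\end{bsmallmatrix}S^\top=\Gamma_2$ for the swap $S$) is right. The one non-constructive step is the even case, where you invoke transitivity of $\{Q:QBQ^\top=B\}$ on $\R^m\setminus\{0\}$; since $B^{-1}=-B$ for $B=\mathrm{H}_2(-1)^{\oplus m/2}$, this group coincides with $\mathrm{Sp}(B)$ and the standard transitivity (extend any nonzero vector to a Darboux basis) applies, so this is a correct appeal to a standard fact rather than a gap — though, as you note, an explicit $Q$ would be needed to match the paper's fully constructive statement. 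Your structural approach buys conceptual clarity and, usefully, makes explicit that the congruence carries $\mathbbm{1}_m$ to a standard basis vector — precisely the compatibility with the vector level that the paper later needs in the proof of \Cref{thm:main_matrix}; the paper's approach buys a single closed-form matrix that can be implemented directly.
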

\begin{proof}
Similarly as in \Cref{lem:UmUT_nf}, we can construct the explicit transformation matrix 
\begin{align}\label{eq:transformAxisNF}
P&:=\sqrt{2}\,\mathrm{T}_{12}^{m+1}\left(\mathrm{I}_m-\sum_{s=2}^m\mathrm{E}_{s1}
  +
  \sum_{i=1}^{\lfloor\frac{m-1}2\rfloor}\sum_{j=1}^{m-2i}\left(
  \mathrm{E}_{j,m-2(i-1)}
  -\mathrm{E}_{j,m-2(i-1)-1}
  \right)\right)\nonumber
\end{align}
where $\mathrm{T}_{12}$ denotes the matrix that permutes the first and second row. Then, by construction,    
    $PC_m
    P^\top$ is in normal form. 
\end{proof}

For every $\alpha\in\mathcal{C}_m$ of length $N$ we consider the matrix projection of the barycenter \eqref{eq:mainTask},
$$W_\alpha:=\mathsf{bary}^{(2)}\left(
\sigma(\mathsf{Ax}^{\alpha,1}),\dots,\sigma(\mathsf{Ax}^{\alpha,N})\right).$$
We show that this matrix is a block diagonal matrix plus a rank-$1$ matrix, where the blocks are of the form discussed in \Cref{lem:UmUT_nf}. 

\begin{lemma}\label{lem:barycenterMatrixWmFormalua} 
For every $\alpha\in\mathcal{C}_m$ of length $N$, we have 
\begin{equation*}
W_\alpha
=
\frac1{2N}\bigoplus_{i=1}^N\left(\mathrm{U}_{\alpha_i}-\mathrm{U}_{\alpha_i}^\top\right)+\frac1{2N^2}{\mathbbm{1}}_{m\times m}.
\end{equation*}
\end{lemma}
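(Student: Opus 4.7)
The plan is to apply the closed-form expression for barycenters in matrix truncation from \Cref{closed_formula_bary_k2} directly to the sample $\sigma(\mathsf{Ax}^{\alpha,1}),\dots,\sigma(\mathsf{Ax}^{\alpha,N})$, and then simplify using the block structure of the axis subpaths.

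First, I would observe that each axis subpath is a congruence transform of a canonical axis path embedded into a disjoint coordinate block. Specifically, let $A_i\in\R^{m\times\alpha_i}$ be the matrix whose columns are the standard basis vectors $\mathrm{e}_{\alpha_1+\dots+\alpha_{i-1}+1},\dots,\mathrm{e}_{\alpha_1+\dots+\alpha_i}$ of $\R^m$. Then $\mathsf{Ax}^{\alpha,i}=A_i\cdot\axis^{\alpha_i}$ (using invariance under time reparametrization), so by \Cref{lem:equivariance_signature} together with \eqref{eq:sigAxism},
\begin{align*}
\sigma^{(1)}(\mathsf{Ax}^{\alpha,i})&=A_i\cdot\mathbbm{1}_{\alpha_i\times 1}=:v_i,\\
\sigma^{(2)}(\mathsf{Ax}^{\alpha,i})&=A_i\,C_{\alpha_i}A_i^\top=A_i\left(\tfrac12\mathrm{I}_{\alpha_i}+\mathrm{U}_{\alpha_i}\right)A_i^\top.
\end{align*}
Here $v_i\in\R^m$ is the indicator of the $i$-th block, and the matrix $A_iC_{\alpha_i}A_i^\top$ is block-diagonal with $C_{\alpha_i}$ in the $i$-th block and zeros elsewhere. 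In particular $\sum_i v_i=\mathbbm{1}_{m\times 1}$.

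Next I would substitute these expressions into the level-$2$ component of the closed form from \Cref{closed_formula_bary_k2}. This gives
\begin{align*}
W_\alpha
&=\frac1N\sum_{i=1}^NA_iC_{\alpha_i}A_i^\top
-\frac1{2N}\sum_{i=1}^Nv_iv_i^\top
+\frac1{2N^2}\sum_{i_1=1}^N\sum_{i_2=1}^Nv_{i_1}v_{i_2}^\top.
\end{align*}
The third sum factors as $\frac1{2N^2}(\sum_i v_i)(\sum_i v_i)^\top=\frac1{2N^2}\mathbbm{1}_{m\times m}$. The first two sums are both block-diagonal (since the supports of the $v_i$ are disjoint), with $i$-th block $C_{\alpha_i}$ and $\mathbbm{1}_{\alpha_i\times\alpha_i}$ respectively. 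Combining them gives
$$\frac1N\bigoplus_{i=1}^N\left(C_{\alpha_i}-\tfrac12\mathbbm{1}_{\alpha_i\times\alpha_i}\right)+\frac1{2N^2}\mathbbm{1}_{m\times m}.$$

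The final step is the purely matrix-algebraic identity
$$C_{\alpha_i}-\tfrac12\mathbbm{1}_{\alpha_i\times\alpha_i}=\tfrac12\mathrm{I}_{\alpha_i}+\mathrm{U}_{\alpha_i}-\tfrac12\left(\mathrm{I}_{\alpha_i}+\mathrm{U}_{\alpha_i}+\mathrm{U}_{\alpha_i}^\top\right)=\tfrac12\left(\mathrm{U}_{\alpha_i}-\mathrm{U}_{\alpha_i}^\top\right),$$
which uses the decomposition $\mathbbm{1}_{\alpha_i\times\alpha_i}=\mathrm{I}_{\alpha_i}+\mathrm{U}_{\alpha_i}+\mathrm{U}_{\alpha_i}^\top$ of the all-ones matrix. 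Inserting this into the block-diagonal sum yields the claimed formula. There is no genuine obstacle here: once one sets up the indicator-vector bookkeeping, everything reduces to summing block-diagonal matrices against the rank-one correction $\mathbbm{1}_{m\times m}$; the only point requiring care is ensuring that the cross terms from $\sum_{i_1,i_2}v_{i_1}v_{i_2}^\top$ are handled as the rank-one outer product $\mathbbm{1}\mathbbm{1}^\top$ rather than a block-diagonal sum.
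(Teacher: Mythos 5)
Your proposal is correct and follows essentially the same route as the paper's proof: both apply the level-$2$ component of \Cref{closed_formula_bary_k2} to the sample $\sigma(\mathsf{Ax}^{\alpha,1}),\dots,\sigma(\mathsf{Ax}^{\alpha,N})$, use equivariance to express these signatures as block embeddings of $C_{\alpha_i}$ and block indicator vectors, and then combine the block-diagonal terms with the rank-one correction $\mathbbm{1}_{m\times m}$. Your explicit use of the decomposition $\mathbbm{1}_{\alpha_i\times\alpha_i}=\mathrm{I}_{\alpha_i}+\mathrm{U}_{\alpha_i}+\mathrm{U}_{\alpha_i}^\top$ is exactly the simplification the paper performs implicitly, so there is nothing to add.
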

\begin{example} For $m=12$ and $\alpha={(4,6,2)}\in\mathcal{C}_{12}$ of length $N=3$ we have
 \begin{align*}W_{(4,6,2)}&=
 \frac1{6}\begin{bsmallmatrix}
 0 & 1 & 1 & 1 & 0 & 0 & 0 & 0 & 0 & 0 & 0 & 0 \\
-1 & 0 & 1 & 1 & 0 & 0 & 0 & 0 & 0 & 0 & 0 & 0 \\
-1 & -1 & 0 & 1 & 0 & 0 & 0 & 0 & 0 & 0 & 0 & 0 \\
-1 & -1 & -1 & 0 & 0 & 0 & 0 & 0 & 0 & 0 & 0 & 0 \\
0 & 0 & 0 & 0 & 0 & 1 & 1 & 1 & 1 & 1 & 0 & 0 \\
0 & 0 & 0 & 0 & -1 & 0 & 1 & 1 & 1 & 1 & 0 & 0 \\
0 & 0 & 0 & 0 & -1 & -1 & 0 & 1 & 1 & 1 & 0 & 0 \\
0 & 0 & 0 & 0 & -1 & -1 & -1 & 0 & 1 & 1 & 0 & 0 \\
0 & 0 & 0 & 0 & -1 & -1 & -1 & -1 & 0 & 1 & 0 & 0 \\
0 & 0 & 0 & 0 & -1 & -1 & -1 & -1 & -1 & 0 & 0 & 0 \\
0 & 0 & 0 & 0 & 0 & 0 & 0 & 0 & 0 & 0 & 0 & 1 \\
0 & 0 & 0 & 0 & 0 & 0 & 0 & 0 & 0 & 0 & -1 & 0
\end{bsmallmatrix}
+\frac1{18}{\mathbbm{1}}_{12\times 12}.
\end{align*}
\end{example}
\begin{proof}[Proof of \Cref{lem:barycenterMatrixWmFormalua}]
Projection of \Cref{closed_formula_bary_k2} to matrix level yields  
\begin{equation}\label{eq:recallBary2ProofWm}
\bary^{(2)}(\mathbf{x})=\frac1N\sum_{i=1}^N\mathbf{x}_i^{(2)}-\frac1{2N}\sum_{i=1}^N\mathbf{x}_i^{(1)}\otimes\mathbf{x}_i^{(1)}
+\frac{1}{2N^2}\sum_{i_1=1}^N\sum_{i_2=1}^N\mathbf{x}_{i_1}^{(1)}\otimes\mathbf{x}_{i_2}^{(1)}
\end{equation}
for every sample $\mathbf{x}\in \mathcal{G}_{d,k}^N$. 
With equivariance of the signature and \Cref{lem:axis_dec}, we have 
\begin{equation*}
\sigma^{(2)}(\ax^{\alpha,i})
=0^{\oplus (\alpha_1+\dots+\alpha_{i-1})}\oplus\left(\frac12\,\mathrm{I}_{\alpha_i}+\mathrm{U}_{\alpha_i}\right)\oplus 0^{\oplus(\alpha_{i+1}+\dots+\alpha_{N})}
\end{equation*}
and therefore 
\begin{equation}\label{eq:proofWmmatrixterm}\frac1N\sum_{i=1}^N\sigma^{(2)}(\ax^{\alpha,i})
=
\frac1{2N}\mathrm{I}_{m}+\bigoplus_{i=1}^N\left(\mathrm{U}_{\alpha_i}-\mathrm{U}_{\alpha_i}^\top\right)
\end{equation}
for the first summand in \eqref{eq:recallBary2ProofWm}. The remaining two summands are algebraic combinations of the signature vectors
$$
\sigma^{(1)}(\ax^{\alpha,i})
=
\begin{bmatrix}0_{ \alpha_1+\dots+\alpha_{i-1}}\\\mathbbm{1}_{\alpha_i}\\0_{ m_{i+1}+\dots+\alpha_{N}}\end{bmatrix},
$$
that is 
\begin{equation}\label{eq:proofWm2ndsum}
    -\frac1{2N}\sum_{i=1}^N\sigma^{(1)}(\ax^{\alpha,i})\otimes\sigma^{(1)}(\ax^{\alpha,i})
    =
    -\frac1{2N}\bigoplus_{i=1}^N\mathbbm{1}_{\alpha_i\times\alpha_i}
\end{equation}
and 
\begin{equation}\label{eq:proofWm2ndsum2}
\frac{1}{2N^2}\sum_{i_1=1}^N\sum_{i_2=1}^N\sigma^{(1)}(\ax^{\alpha,i_1})\otimes\sigma^{(1)}(\ax^{\alpha,i_2})=\frac1{2N^2}\mathbbm{1}_{m\times m}
\end{equation}
for all $\alpha$. Combining \eqref{eq:proofWmmatrixterm}, \eqref{eq:proofWm2ndsum} and \eqref{eq:proofWm2ndsum2} in \eqref{eq:recallBary2ProofWm} yields the desired equality. 
\end{proof}

\begin{lemma}\label{cor:transposeWm}
Let $\alpha\in\mathcal{C}_m$ be of length $N$.  
\begin{enumerate}[(i)]
    \item\label{cor:transposeWm1} Transposition yields $W_\alpha^\top=-W_\alpha+\frac1{N^2}\mathbbm{1}_{m\times m}$.  
    \item\label{cor:transposeWm2} The symmetric part $W_\alpha^{\mathsf{sym}}=\frac1{2N^2}\mathbbm{1}_{m\times m}$ has rank $1$.  
     \item\label{cor:transposeWm3} The skew-symmetric part $$W_\alpha^{\mathsf{skew}}=\frac1{2N}\bigoplus_{i=1}^N\left(\mathrm{U}_{\alpha_i}-\mathrm{U}_{\alpha_i}^\top\right)$$ has rank $m-\#_\alpha^{\mathsf{odd}}$. 
\item \label{lem:WmInverseFormula}
    If $\alpha\in\mathcal{C}_m$ and all $\alpha_i$ are even, then $W_\alpha$ is invertible with inverse     $$W_\alpha^{-1}=4N^2\mathrm{Q}_mW_\alpha \mathrm{Q}_m.$$

    \end{enumerate}
\end{lemma}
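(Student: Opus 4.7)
Parts (i)--(iii) will unpack nearly mechanically from the explicit formula of Lemma 6.5, which I will write as $W_\alpha = \tfrac{1}{2N}\,M + \tfrac{1}{2N^2}\,\mathbbm{1}_{m\times m}$ with $M := \bigoplus_{i=1}^N(\mathrm{U}_{\alpha_i} - \mathrm{U}_{\alpha_i}^\top)$ skew-symmetric and $\mathbbm{1}_{m\times m} = e e^\top$ symmetric of rank one, where $e$ is the all-ones vector. Transposing flips only the sign on $M$, giving $W_\alpha + W_\alpha^\top = \tfrac{1}{N^2}\mathbbm{1}_{m\times m}$; this yields (i) directly, and halving it gives (ii). The skew part is then $W_\alpha - W_\alpha^{\mathsf{sym}} = \tfrac{1}{2N}M$, and Lemma 6.2 shows each block $\mathrm{U}_{\alpha_i} - \mathrm{U}_{\alpha_i}^\top$ has rank $\alpha_i$ or $\alpha_i - 1$ according to the parity of $\alpha_i$; summing over blocks yields $\alpha_+ - \#_\alpha^{\mathsf{odd}} = m - \#_\alpha^{\mathsf{odd}}$, which is (iii).

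For (iv), the assumption that every $\alpha_i$ is even is essential in two ways. First, it makes the diagonal signs of $\mathrm{Q}_m$ restart at $+1$ at the beginning of every block, so $\mathrm{Q}_m = \bigoplus_{i=1}^N \mathrm{Q}_{\alpha_i}$; Corollary 6.3 then gives $\mathrm{Q}_m M \mathrm{Q}_m = M^{-1}$ blockwise. Setting $v := \mathrm{Q}_m e = (1,-1,1,-1,\dots)^\top$, so that $\mathrm{Q}_m e e^\top \mathrm{Q}_m = v v^\top$, a direct expansion gives
\[4N^2\,\mathrm{Q}_m W_\alpha \mathrm{Q}_m \;=\; 2N\,M^{-1} + 2\,v v^\top.\]
I plan to verify this is $W_\alpha^{-1}$ by computing the product $W_\alpha \cdot (2N M^{-1} + 2 v v^\top)$ directly. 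The leading term is $\mathrm{I}_m$, and the remaining cross terms are $\tfrac{1}{N}(Mv)v^\top$, $\tfrac{1}{N}e(e^\top M^{-1})$, and $\tfrac{1}{N^2}(e^\top v)\,e v^\top$.

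The key blockwise identity I need is $(\mathrm{U}_{\alpha_i} - \mathrm{U}_{\alpha_i}^\top)\,v_{\alpha_i} = -\mathbbm{1}_{\alpha_i}$ for even $\alpha_i$: row $j$ equals $\sum_{l > j}(-1)^{l+1} - \sum_{l < j}(-1)^{l+1}$, which telescopes to $-1$ independently of $j$ precisely when $\alpha_i$ is even. Globally this gives $Mv = -e$, and transposing using $M^\top = -M$ yields $e^\top M^{-1} = v^\top$. The second consequence of evenness is $e^\top v = \sum_i \sum_{j=1}^{\alpha_i}(-1)^{j+1} = 0$, since each block is an alternating sum of even length. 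Substituting, the three cross terms collapse to $-\tfrac{1}{N}e v^\top + \tfrac{1}{N}e v^\top + 0 = 0$, establishing (iv). The main obstacle is precisely this sign bookkeeping in (iv); both the block decomposition of $\mathrm{Q}_m$ and the vanishing of $e^\top v$ would fail without the evenness hypothesis, so parts (i)--(iii) are genuinely the easy half.
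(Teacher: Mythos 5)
Your proof is correct. Parts (i)--(iii) follow the paper's route exactly: everything is read off the explicit formula of \Cref{lem:barycenterMatrixWmFormalua}, and the rank in (iii) is the sum of the block ranks from \Cref{lem:UmUT_nf}. For part (iv) you take a genuinely different path. The paper inverts $W_\alpha^{\mathsf{skew}}$ blockwise via \Cref{cor:UdmUdT_invertible} and then absorbs the rank-one term $\frac1{2N^2}\mathbbm{1}_{m}\mathbbm{1}_{m}^\top$ with the Sherman--Morrison formula, whereas you expand the product $W_\alpha\cdot\left(2NM^{-1}+2vv^\top\right)$ directly and cancel the cross terms using $Mv=-\mathbbm{1}_m$, $\mathbbm{1}_m^\top M^{-1}=v^\top$ and $\mathbbm{1}_m^\top v=0$, with $v=\mathrm{Q}_m\mathbbm{1}_m$. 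Both arguments hinge on the same two consequences of evenness that you correctly isolate: $\mathrm{Q}_m=\bigoplus_i\mathrm{Q}_{\alpha_i}$, so that \Cref{cor:UdmUdT_invertible} applies blockwise to give $\mathrm{Q}_mM\mathrm{Q}_m=M^{-1}$, and the vanishing of the alternating sums. Your telescoping computation of $(\mathrm{U}_{\alpha_i}-\mathrm{U}_{\alpha_i}^\top)v_{\alpha_i}=-\mathbbm{1}_{\alpha_i}$ for even $\alpha_i$ checks out. What your version buys is self-containedness: it avoids citing Sherman--Morrison and makes explicit the cancellation that the paper's final display leaves implicit (the paper jumps from the Sherman--Morrison expression to the closed form without showing that $(W_\alpha^{\mathsf{skew}})^{-1}\mathbbm{1}_m=-2Nv$). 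The paper's version is shorter once that formula is granted. Either argument is acceptable.
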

\begin{proof}
Parts \eqref{cor:transposeWm1} and \eqref{cor:transposeWm2} follow from \Cref{lem:barycenterMatrixWmFormalua}. For part \eqref{cor:transposeWm3} we use the rank of every block from \Cref{lem:UmUT_nf}. 
For part \eqref{lem:WmInverseFormula} we note that $W_\alpha^{\mathsf{skew}}$ is invertible 
 as a block diagonal matrix of invertible matrices from \Cref{cor:UdmUdT_invertible}, 
$${(W_\alpha^{\mathsf{skew}})}^{-1}=2N\bigoplus_{i=1}^N{\left(\mathrm{U}_{\alpha_i}-\mathrm{U}_{\alpha_i}^\top\right)}^{-1}=2N\mathrm{Q}_m\left(\bigoplus_{i=1}^N{\mathrm{U}_{\alpha_i}-\mathrm{U}_{\alpha_i}^\top}\right)\mathrm{Q}_m=4N^2\mathrm{Q}_mW_\alpha^{\mathsf{skew}} \mathrm{Q}_m.$$
With \Cref{lem:barycenterMatrixWmFormalua}, 
$W_\alpha=W_\alpha^{\mathsf{skew}}+\frac1{2N^2}{\mathbbm{1}}_{m}{\mathbbm{1}}_{m}^\top$
and the second summand is a rank $1$ update with 
$$\frac1{2N^2}{\mathbbm{1}}_{m}^\top  W_\alpha^{\mathsf{skew}}{\mathbbm{1}}_{m}=0,$$
so $W_\alpha$ is invertible with 
inverse
$$W^{-1}_\alpha={(W_\alpha^{\mathsf{skew}})}^{-1}-\frac1{2N^2}{(W_\alpha^{\mathsf{skew}})}^{-1}{\mathbbm{1}}_{m}{\mathbbm{1}}_{m}^\top {(W_\alpha^{\mathsf{skew}})}^{-1}=2\mathrm{Q}_m(2N^2W_\alpha^{\mathsf{skew}}+{\mathbbm{1}}_{m\times m})\mathrm{Q}_m$$
using the Sherman–Morrison formula, e.g. \cite[eq. (160)]{petersen2008matrix}. 
\end{proof}

\begin{proposition}\label{lem:nf_Wm}For all $\alpha\in\mathcal{C}_m$, 
\begin{align*}
W_{\alpha}
\sim 
\begin{cases}
   \Gamma_2 \oplus\, \mathrm{H}_2(-1)^{\,\oplus \frac{m-2}{2}}&\text{all }\alpha_i\text{ even,}
   \\
1\oplus\mathrm{H}_2(-1)^{\,\oplus\frac12(m-\#_\alpha^{\mathsf{odd}})}\oplus 0^{\oplus(\#_\alpha^{\mathsf{odd}}-1)}
&\text{otherwise,}
\end{cases}
\end{align*}
where $\#^{\mathsf{odd}}_\alpha$ denotes the number of odd indices in $\alpha$.  
\end{proposition}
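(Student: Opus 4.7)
My plan is to exploit the decomposition $W_\alpha = W_\alpha^{\mathsf{skew}} + vv^\top$ from \Cref{lem:barycenterMatrixWmFormalua,cor:transposeWm}, with $v := \mathbbm{1}_m/(N\sqrt{2})$. Since congruence absorbs positive scalars, it suffices to analyze $K + vv^\top$, where $K := \bigoplus_{i=1}^N(\mathrm{U}_{\alpha_i} - \mathrm{U}_{\alpha_i}^\top)$.

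The first step is to apply \Cref{lem:UmUT_nf} block-by-block, together with a permutation that gathers the zero rows from the odd blocks, to obtain a congruence $P_1$ with $P_1 K P_1^\top = \mathrm{H}_2(-1)^{\oplus(m-r)/2} \oplus 0_{r\times r}$, where $r := \#_\alpha^{\mathsf{odd}}$. Writing $v' := P_1 v = (a;b)$ with $a\in\R^{m-r}$ and $b\in\R^r$, the key preliminary claim is that $b\neq 0$ whenever $r>0$. This follows because for odd $\ell$ the kernel of $\mathrm{U}_\ell-\mathrm{U}_\ell^\top$ is spanned by the alternating vector $(1,-1,1,\dots,1)^\top$, whose pairing with $\mathbbm{1}_\ell$ equals $1$, so each odd block contributes a nonzero kernel component to $v'$.

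When $r > 0$, I apply the block shear $P_2 := \bigl[\begin{smallmatrix}I & C\\ 0 & I\end{smallmatrix}\bigr]$ with $C := -ab^\top/(b^\top b)$. A direct computation shows $P_2$ fixes $P_1 K P_1^\top$ (its bottom-right block vanishes) and sends $v'$ to $(0;b)$, so that $W_\alpha$ is congruent to $\mathrm{H}_2(-1)^{\oplus(m-r)/2} \oplus bb^\top$. Since $b\neq 0$, the rank-one PSD matrix $bb^\top$ is congruent to $1\oplus 0^{\oplus(r-1)}$, and reordering summands yields the claimed normal form. When $r = 0$, the skew form is nondegenerate and there is no kernel to absorb the perturbation; here I invoke transitivity of the real symplectic group on $\R^m\setminus\{0\}$ (Darboux) to find a symplectic $P_2$ with $P_2 v' = e_2$, giving $\mathrm{H}_2(-1)^{\oplus m/2} + e_2 e_2^\top$. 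Its first $2\times 2$ block $\bigl[\begin{smallmatrix}0 & 1\\ -1 & 1\end{smallmatrix}\bigr]$ is congruent to $\Gamma_2$ via $\mathrm{diag}(1,-1)$, completing this case.

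The main difficulty I anticipate is verifying $b\neq 0$ in the odd case, which requires tracking the explicit kernel vectors through the block-diagonalization $P_1$ from the proof of \Cref{lem:UmUT_nf}. A secondary technical point is a clean justification of symplectic transitivity in the all-even case; rather than appealing to abstract Lie theory, I would prefer an explicit Darboux basis construction, extending the nonzero vector $v'$ to a symplectic basis for the form $\mathrm{H}_2(-1)^{\oplus m/2}$.
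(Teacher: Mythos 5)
Your argument is correct, but it takes a genuinely different route from the paper. The paper's main proof passes through the theory of congruence canonical forms: after isolating the odd block with the shear \eqref{eq:transdormIfm1IsOddToBlock}, it computes the cosquare $W_\alpha^{-\top}W_\alpha$ (using the explicit inverse from \Cref{cor:transposeWm}), matches its Jordan form with that of $C_m^{-\top}C_m$, and invokes Horn--Sergeichuk (nonsingular matrices are congruent iff their cosquares are similar) together with \DJ okovi\'c--Szechtman to descend from $\C$ to $\R$; an alternative proof in the paper instead builds the congruence transforms fully explicitly. You work directly with the decomposition $W_\alpha = W_\alpha^{\mathsf{skew}} + vv^\top$: block-diagonalize the skew part via \Cref{lem:UmUT_nf}, then either absorb the rank-one symmetric perturbation into the kernel of the skew form (odd case) or normalize it by symplectic transitivity (all-even case). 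This is more elementary and self-contained -- it needs no canonical-form machinery beyond \Cref{lem:UmUT_nf} -- at the cost of not producing the explicit transformation matrices the paper later reuses in the proof of \Cref{thm:main_matrix}, where the congruence must be tracked simultaneously on the vector level $\frac1N\mathbbm{1}_m$. Two small remarks: the "main difficulty" you flag, showing $b\neq 0$, has a clean invariant formulation that avoids tracking kernel vectors through $P_1$, namely $b=0$ iff $\mathbbm{1}_m\in\operatorname{im}(K)=(\ker K)^\perp$, and each odd block contributes the alternating kernel vector, which pairs to $1$ with $\mathbbm{1}_m$; and your scalar bookkeeping is harmless since $W_\alpha$ differs from $K+cc^\top$ (with $c$ a positive multiple of $\mathbbm{1}_m$) only by a positive scalar, which congruence absorbs. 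All the pointwise checks go through: $P_2$ fixes $H\oplus 0_r$, $bb^\top\sim 1\oplus 0^{\oplus(r-1)}$, and $\operatorname{diag}(1,-1)\begin{bsmallmatrix}0&1\\-1&1\end{bsmallmatrix}\operatorname{diag}(1,-1)=\Gamma_2$.
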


\begin{proof}
First, we assume that all $m_i$ are even.
From \cite[Lemma 2.1]{HORN20061010} we recall that nonsingular complex matrices are congruent, if and only if their cosquares are similar. If we find real transformation matrices with respect to similarity, then \cite[Theorem 1.1]{DOKOVIC2002149} guarantees the existence of real transformation matrices with respect to congruence.
With \Cref{cor:transposeWm}, 
$$W_\alpha^{-\top}W_\alpha=-4N^2\mathrm{Q}_mW_\alpha^\top \mathrm{Q}_m W_\alpha=-\mathrm{I}_m+\frac2{N}\mathrm{Q}_m\mathbbm{1}_{m\times m}$$
and analogously
$C_m^{-\top}C_m=-\mathrm{I}_m+2\mathrm{Q}_m\mathbbm{1}_{m\times m}$. Both can be transformed to the same Jordan normal form $1^{\oplus m-2}\oplus\mathrm{J}_2(-1)$ so the claim follows with 
\Cref{lem:axismatrix_nf}. 
\par
We continue with the case that at least one $\alpha_i$ is odd.
Note that $W_{(\alpha_1,\dots,\alpha_N)}
\sim W_{(\alpha_2,\dots,\alpha_N,\alpha_1)}$
with a suitable cyclic rotation of blocks, 
so we can assume without loss of generality that $\alpha_1$ odd. We apply
the congruence transform 
\begin{equation}\label{eq:transdormIfm1IsOddToBlock}
P_1:=\mathrm{I}_m+\sum_{i=\alpha_1+1}^m\sum_{j=1}^{\alpha_1}(-1)^j\,\mathrm{E}_{ij}\end{equation}
and obtain 
\begin{align*}
P_1W_\alpha P_1^\top&=\left(\frac1{2N}\left(\mathrm{U}_{\alpha_1}-\mathrm{U}_{\alpha_1}^\top\right)+\frac1{2N^2}\mathbbm{1}_{\alpha_1\times \alpha_1}\right)\oplus\frac1{2N}\bigoplus_{i=2}^N\left(\mathrm{U}_{\alpha_i}-\mathrm{U}_{\alpha_i}^\top\right)\\
&\sim 
\left(\mathrm{U}_{\alpha_1}-\mathrm{U}_{\alpha_1}^\top+\frac1{N}\mathbbm{1}_{\alpha_1\times \alpha_1}\right)
\oplus
\mathrm{H}_2(-1)^{\,\oplus\frac12(m-\alpha_1-\#_\alpha^{\mathsf{odd}}-1)}\oplus 0^{\oplus(\#_\alpha^{\mathsf{odd}}-1)}
\end{align*}
from \Cref{lem:UmUT_nf}.
It remains to consider
\begin{equation*}
Z:=\mathrm{U}_{\alpha_1}-\mathrm{U}_{\alpha_1}^\top+\frac1{N}\mathbbm{1}_{\alpha_1\times \alpha_1}
\end{equation*}
for odd $\alpha_1$. 
Analogous as above, $Z$ is invertible with  $Z^{-1}=\mathrm{Q}_{\alpha_1}(\mathrm{U}_{\alpha_1}-\mathrm{U}_{\alpha_1}^\top+N\mathbbm{1}_{\alpha_1\times \alpha_1})\mathrm{Q}_{\alpha_1}$ and the cosquare of $Z$ satisfies
$$Z^{-\top}Z=-\mathrm{I}_{\alpha_1}+2\mathrm{Q}_{\alpha_1}\mathbbm{1}_{\alpha_1\times \alpha_1}
=
C_{\alpha_1}^{-\top}C_{\alpha_1}
$$
so again with \cite[Lemma 2.1]{HORN20061010} and \cite[Theorem 1.1]{DOKOVIC2002149}, we get 
$Z\sim C_{\alpha_1}\sim 1\oplus \mathrm{H}_2(-1)^{\,\oplus\frac12(\alpha_1-1)}$
where the last congruence is according to \Cref{lem:axismatrix_nf}.
\end{proof}

\begin{corollary}The rank of $W_\alpha$ is $m$, if all $\alpha_i$ are even, and $m-\#^{\mathsf{odd}}_\alpha+1$, otherwise.
\end{corollary}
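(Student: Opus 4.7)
The plan is to simply read off the rank from the congruence normal form established in \Cref{lem:nf_Wm}. The key observation is that matrix congruence preserves rank: if $P M P^\top = V$ with $P$ invertible, then $M$ and $V$ have the same rank, since multiplication by invertible matrices on either side does not change the rank. Hence it suffices to compute the rank of the block-diagonal normal form appearing in \Cref{lem:nf_Wm}, which splits the problem by the parity of the entries of $\alpha$.

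In the case where all $\alpha_i$ are even, the normal form is $\Gamma_2 \oplus \mathrm{H}_2(-1)^{\oplus (m-2)/2}$. I would simply note that $\det(\Gamma_2) = 1$ and $\det(\mathrm{H}_2(-1)) = 1$, so both building blocks are invertible $2 \times 2$ matrices, yielding full rank $2 + 2 \cdot \tfrac{m-2}{2} = m$.

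In the case where at least one $\alpha_i$ is odd, the normal form is $1 \oplus \mathrm{H}_2(-1)^{\oplus (m-\#^{\mathsf{odd}}_\alpha)/2} \oplus 0^{\oplus (\#^{\mathsf{odd}}_\alpha - 1)}$. Adding up the ranks of the three blocks gives $1 + (m - \#^{\mathsf{odd}}_\alpha) + 0 = m - \#^{\mathsf{odd}}_\alpha + 1$, which matches the claimed formula. There is no real obstacle here, as all of the work has already been done in \Cref{lem:nf_Wm}; the corollary is an immediate bookkeeping step, and the only thing to verify is that the two parity cases exhaust $\mathcal{C}_m$ and that the integer $(m - \#^{\mathsf{odd}}_\alpha)/2$ in the second case is well-defined (which holds since $m$ and $\#^{\mathsf{odd}}_\alpha$ always have the same parity, as $m = \sum_i \alpha_i$).
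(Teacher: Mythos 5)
Your proposal is correct and is exactly the intended argument: the paper states this corollary without proof as an immediate consequence of \Cref{lem:nf_Wm}, since congruence preserves rank and the rank of the normal form is read off block by block (with $\Gamma_2$ and $\mathrm{H}_2(-1)$ both invertible). Your parity check that $\tfrac12(m-\#^{\mathsf{odd}}_\alpha)$ is an integer is a nice touch but not something the paper dwells on.
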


Note that \Cref{lem:nf_Wm} also implies \Cref{thm:main_matrix} when projected to signature matrices. However, since we consider path recovery over the entire Lie group (with even truncation level $k=2$) we cannot apply \cite[Proposition 6.3]{bib:AFS2019} for a one-to-one recovery via projection to signature matrices. For a one-to-one recovery, we provide the following explicit congruence transform.  

\begin{proof}[Alternative proof of \Cref{lem:nf_Wm}]
   We construct the congruence transform $P$ explicitly such that $PW_\alpha P^\top=C_{\rank(W_\alpha)}\oplus 0^{\oplus m-\rank(W_\alpha)}$. If all $\alpha_i$ are even, then 
\begin{equation}\label{eq:Q1WmToAxisd}
P_1:=\mathrm{I}_m-\left(\mathrm{U}^\top_m-\bigoplus_{i=1}^N\mathrm{U}^\top_{\alpha_i}\right)\mathrm{Q}_m
\end{equation}
transforms $W_\alpha$ to 
\begin{equation}\label{eq:matrixTransform}P_1W_\alpha P_1^\top=\frac1{2N^2}(\mathrm{I}_m+(N+1)\mathrm{U}_m-(N-1)\mathrm{U}_m^\top)\end{equation}
and with
$$P_2:=\mathrm{I}_m+(N-1)\sum_{j=1}^m\sum_{i=2\lfloor\frac j2\rfloor+1}^m\mathrm{E}_{ij}\mathrm{Q}_m$$
we obtain a linear transform 
such that 
$P_2P_1W_\alpha{(P_2P_1)}^\top=C_m$. 
\par
Otherwise, as seen in the first proof of \Cref{lem:nf_Wm}, we can assume that $\alpha_1$ is odd, and proceed analogously with the transform $P_1$ according to \eqref{eq:transdormIfm1IsOddToBlock}. We construct the transform 
$$P_2:=\left(N\mathrm{I}_{\alpha_1}+(N-1)\sum_{j=1}^{\frac{\alpha_1-1}2}\sum_{i=2j}^ {\alpha_1}\left(\mathrm{E}_{i,2j-1}-\mathrm{E}_{i,2j}\right)\right)\oplus\mathrm{I}_{\alpha_2+\dots+\alpha_N}$$
so that 
\begin{equation}\label{eq:prooftransformalpha1odd}
P_2P_1W_\alpha (P_2P_1)^\top=C_{\alpha_1}\oplus\frac1{2N}\bigoplus_{i=2}^N\left(\mathrm{U}_{\alpha_i}-\mathrm{U}_{\alpha_i}^\top\right)\end{equation}
for every $\alpha$. With \Cref{lem:UmUT_nf,lem:axismatrix_nf} we obtain a congruence transformation to the normal form, up to a reordering of elementary blocks. A last inverted transform according to \Cref{lem:axismatrix_nf} provides the desired congruence transform from $W_\alpha$ to $C_{\rank(W_\alpha)}\oplus 0^{\oplus m-\rank(W_\alpha)}$. 
\end{proof}

With this alternative proof, we can now prove the remaining main result. 

\begin{proof}[Proof of \Cref{thm:main_matrix}]Without loss of generality we can restrict to the case $\alpha\in\mathcal{C}_m$. 
The vector projection of \eqref{eq:mainTask} satisfies 
\begin{equation*}
\mathsf{bary}^{(1)}\left(
\sigma(\mathsf{Ax}^{\alpha,1}),\dots,\sigma(\mathsf{Ax}^{\alpha,N})\right)
=\frac1N\mathbbm{1}_{m}
\end{equation*}
where $N$ is the length of  $\alpha$.
We use the transformations according to the proof of \Cref{lem:nf_Wm}. 
\par
 If all $\alpha_i$ are even, then the matrix transforms $P_2$ and $P_1$ from \eqref{eq:Q1WmToAxisd} and \eqref{eq:matrixTransform} have the eigenvalues $1$ and $N$ for the same eigenvector $\mathbbm{1}_m$. Therefore, $P_2P_1$ simultaneously transforms the matrix and vector level of \eqref{eq:mainTask} to $\sigma(\axis^m)$, that is, $B_{d,k}(\alpha)= \rank(W_\alpha)$ according to the proof of \Cref{prop:well_def_bary_order}. 
\par
Otherwise, we assume that $\alpha_1$ is odd and proceed as in \eqref{eq:prooftransformalpha1odd}. Then $$\frac1NP_2P_1\mathbbm{1}_m=\begin{bmatrix}
    \mathbbm{1}_{\alpha_1}\\
    0_{\alpha_2+\dots+\alpha_N}
\end{bmatrix}$$
and since the matrix from \Cref{lem:axismatrix_nf} transforms $\mathbbm{1}_{\alpha_1}$ to $\mathrm{e}_1$ we see that \Cref{lem:nf_Wm} provides a compatible transform of vectors and matrices, i.e., $B_{d,k}(\alpha)= \rank(W_\alpha)$. 
\end{proof}

\begin{example}
We illustrate the construction of \Cref{thm:main_matrix} for $\alpha=(4,6,2)\in\mathcal{C}_{12}$. Since all $\alpha_i$ are even, we are in the first case of the construction and obtain our simultaneous transform 
$$
\begin{bsmallmatrix}
   3 & 0 & 0 & 0 & 0 & 0 & 0 & 0 & 0 & 0 & 0 & 0 \\
2 & 1 & 0 & 0 & 0 & 0 & 0 & 0 & 0 & 0 & 0 & 0 \\
2 & -2 & 3 & 0 & 0 & 0 & 0 & 0 & 0 & 0 & 0 & 0 \\
2 & -2 & 2 & 1 & 0 & 0 & 0 & 0 & 0 & 0 & 0 & 0 \\
2 & -2 & 2 & -2 & 3 & 0 & 0 & 0 & 0 & 0 & 0 & 0 \\
2 & -2 & 2 & -2 & 2 & 1 & 0 & 0 & 0 & 0 & 0 & 0 \\
2 & -2 & 2 & -2 & 2 & -2 & 3 & 0 & 0 & 0 & 0 & 0 \\
2 & -2 & 2 & -2 & 2 & -2 & 2 & 1 & 0 & 0 & 0 & 0 \\
2 & -2 & 2 & -2 & 2 & -2 & 2 & -2 & 3 & 0 & 0 & 0 \\
2 & -2 & 2 & -2 & 2 & -2 & 2 & -2 & 2 & 1 & 0 & 0 \\
2 & -2 & 2 & -2 & 2 & -2 & 2 & -2 & 2 & -2 & 3 & 0 \\
2 & -2 & 2 & -2 & 2 & -2 & 2 & -2 & 2 & -2 & 2 & 1 
\end{bsmallmatrix}
\,
\begin{bsmallmatrix}
    1 & 0 & 0 & 0 & 0 & 0 & 0 & 0 & 0 & 0 & 0 & 0 \\
0 & 1 & 0 & 0 & 0 & 0 & 0 & 0 & 0 & 0 & 0 & 0 \\
0 & 0 & 1 & 0 & 0 & 0 & 0 & 0 & 0 & 0 & 0 & 0 \\
0 & 0 & 0 & 1 & 0 & 0 & 0 & 0 & 0 & 0 & 0 & 0 \\
-1 & 1 & -1 & 1 & 1 & 0 & 0 & 0 & 0 & 0 & 0 & 0 \\
-1 & 1 & -1 & 1 & 0 & 1 & 0 & 0 & 0 & 0 & 0 & 0 \\
-1 & 1 & -1 & 1 & 0 & 0 & 1 & 0 & 0 & 0 & 0 & 0 \\
-1 & 1 & -1 & 1 & 0 & 0 & 0 & 1 & 0 & 0 & 0 & 0 \\
-1 & 1 & -1 & 1 & 0 & 0 & 0 & 0 & 1 & 0 & 0 & 0 \\
-1 & 1 & -1 & 1 & 0 & 0 & 0 & 0 & 0 & 1 & 0 & 0 \\
-1 & 1 & -1 & 1 & -1 & 1 & -1 & 1 & -1 & 1 & 1 & 0 \\
-1 & 1 & -1 & 1 & -1 & 1 & -1 & 1 & -1 & 1 & 0 & 1
\end{bsmallmatrix}
=
\begin{bsmallmatrix}
    3 & 0 & 0 & 0 & 0 & 0 & 0 & 0 & 0 & 0 & 0 & 0 \\
2 & 1 & 0 & 0 & 0 & 0 & 0 & 0 & 0 & 0 & 0 & 0 \\
2 & -2 & 3 & 0 & 0 & 0 & 0 & 0 & 0 & 0 & 0 & 0 \\
2 & -2 & 2 & 1 & 0 & 0 & 0 & 0 & 0 & 0 & 0 & 0 \\
-1 & 1 & -1 & 1 & 3 & 0 & 0 & 0 & 0 & 0 & 0 & 0 \\
-1 & 1 & -1 & 1 & 2 & 1 & 0 & 0 & 0 & 0 & 0 & 0 \\
-1 & 1 & -1 & 1 & 2 & -2 & 3 & 0 & 0 & 0 & 0 & 0 \\
-1 & 1 & -1 & 1 & 2 & -2 & 2 & 1 & 0 & 0 & 0 & 0 \\
-1 & 1 & -1 & 1 & 2 & -2 & 2 & -2 & 3 & 0 & 0 & 0 \\
-1 & 1 & -1 & 1 & 2 & -2 & 2 & -2 & 2 & 1 & 0 & 0 \\
-1 & 1 & -1 & 1 & -1 & 1 & -1 & 1 & -1 & 1 & 3 & 0 \\
-1 & 1 & -1 & 1 & -1 & 1 & -1 & 1 & -1 & 1 & 2 & 1
\end{bsmallmatrix}$$
from matrix level $W_\alpha$ to $C_{12}$, and vector level  $\frac13\mathbbm{1}_{12}$ to $\mathbbm{1}_{12}$, respectively. 
\end{example}

\begin{example}
    We illustrate the construction of \Cref{thm:main_matrix} for $\alpha=(5,4,3,4)\in\mathcal{C}_{16}$. Since $\alpha_1$ is odd, we use \eqref{eq:prooftransformalpha1odd} so that 
    $$P_2P_1=
    \begin{bsmallmatrix}
4 & 0 & 0 & 0 & 0 & 0_{1\times 11} \\
3 & 1 & 0 & 0 & 0 & 0_{1\times 11} \\
3 & -3 & 4 & 0 & 0 & 0_{1\times 11} \\
3 & -3 & 3 & 1 & 0 & 0_{1\times 11} \\
3 & -3 & 3 & -3 & 4 & 0_{1\times 11} \\
-{\mathbbm{1}}_{11} & {\mathbbm{1}}_{11} & -{\mathbbm{1}}_{11} & {\mathbbm{1}}_{11} & -{\mathbbm{1}}_{11} & \mathrm{I}_{11}
\end{bsmallmatrix}
    $$
    transforms $W_\alpha$ to 
    $C_5\oplus\frac18\left(\mathrm{U}_4-\mathrm{U}_4^\top\right)
    \oplus\frac18\left(\mathrm{U}_3-\mathrm{U}_3^\top\right)
    \oplus\frac18\left(\mathrm{U}_4-\mathrm{U}_4^\top\right)$.
    With \Cref{lem:UmUT_nf,lem:axismatrix_nf} we obtain a congruence transformation
    $$P_3:=
    \sqrt2
\begin{bsmallmatrix}
    1 & -1 & 1 & -1 & 1 \\
-1 & 1 & 0 & -1 & 1 \\
-1 & 0 & 1 & -1 & 1 \\
-1 & 0 & 0 & 1 & 0 \\
-1 & 0 & 0 & 0 & 1
\end{bsmallmatrix}
\oplus
2\sqrt2\begin{bsmallmatrix}
    1 & 0 & 0 & 0 \\
0 & 1 & 0 & 0 \\
1 & -1 & 1 & 0 \\
1 & -1 & 0 & 1
\end{bsmallmatrix}
\oplus2\sqrt2\begin{bsmallmatrix}
    1 & 0 & 0 \\
0 & 1 & 0 \\
1 & -1 & 1
\end{bsmallmatrix}
\oplus
2\sqrt2\begin{bsmallmatrix}
    1 & 0 & 0 & 0 \\
0 & 1 & 0 & 0 \\
1 & -1 & 1 & 0 \\
1 & -1 & 0 & 1
\end{bsmallmatrix}
$$
to $1\oplus \mathrm{H}_2(-1)^{\oplus 5}\oplus 0\oplus \mathrm{H}_2(-1)$. We permute the last two blocks and obtain our transformation matrix
$$V:=\begin{bmatrix}
    \mathrm{I}_{11}&0_{11}&0_{11\times 4}\\
    0_{4\times 11}&0_4&\mathrm{I}_4\\
    0_{1\times 11}&1&0_{1\times 4}
\end{bmatrix}\,P_3\,P_2\,P_1$$
to congruence normal form.  
Finally we 
perform an inverted transform according to \Cref{lem:axismatrix_nf}, 
and obtain our simultaneous transformation
$$
\frac1{\sqrt2}\begin{bsmallmatrix}
   1 & -1 & 1 & -1 & 1 & -1 & 1 & -1 & 1 & -1 & 1 & -1 & 1 & -1 & 1 & 0 \\
-1 & 1 & 0 & -1 & 1 & -1 & 1 & -1 & 1 & -1 & 1 & -1 & 1 & -1 & 1 & 0 \\
-1 & 0 & 1 & -1 & 1 & -1 & 1 & -1 & 1 & -1 & 1 & -1 & 1 & -1 & 1 & 0 \\
-1 & 0 & 0 & 1 & 0 & -1 & 1 & -1 & 1 & -1 & 1 & -1 & 1 & -1 & 1 & 0 \\
-1 & 0 & 0 & 0 & 1 & -1 & 1 & -1 & 1 & -1 & 1 & -1 & 1 & -1 & 1 & 0 \\
-1 & 0 & 0 & 0 & 0 & 1 & 0 & -1 & 1 & -1 & 1 & -1 & 1 & -1 & 1 & 0 \\
-1 & 0 & 0 & 0 & 0 & 0 & 1 & -1 & 1 & -1 & 1 & -1 & 1 & -1 & 1 & 0 \\
-1 & 0 & 0 & 0 & 0 & 0 & 0 & 1 & 0 & -1 & 1 & -1 & 1 & -1 & 1 & 0 \\
-1 & 0 & 0 & 0 & 0 & 0 & 0 & 0 & 1 & -1 & 1 & -1 & 1 & -1 & 1 & 0 \\
-1 & 0 & 0 & 0 & 0 & 0 & 0 & 0 & 0 & 1 & 0 & -1 & 1 & -1 & 1 & 0 \\
-1 & 0 & 0 & 0 & 0 & 0 & 0 & 0 & 0 & 0 & 1 & -1 & 1 & -1 & 1 & 0 \\
-1 & 0 & 0 & 0 & 0 & 0 & 0 & 0 & 0 & 0 & 0 & 1 & 0 & -1 & 1 & 0 \\
-1 & 0 & 0 & 0 & 0 & 0 & 0 & 0 & 0 & 0 & 0 & 0 & 1 & -1 & 1 & 0 \\
-1 & 0 & 0 & 0 & 0 & 0 & 0 & 0 & 0 & 0 & 0 & 0 & 0 & 1 & 0 & 0 \\
-1 & 0 & 0 & 0 & 0 & 0 & 0 & 0 & 0 & 0 & 0 & 0 & 0 & 0 & 1 & 0 \\
0 & 0 & 0 & 0 & 0 & 0 & 0 & 0 & 0 & 0 & 0 & 0 & 0 & 0 & 0 & 1
\end{bsmallmatrix}^{-1}
\!\!\!\!\!\!\cdot V
=
\begin{bsmallmatrix}
19 & -15 & 15 & -15 & 15 & -2 & -1 & 1 & -4 & 1 & -4 & 0 & -2 & -1 & 1 & -4 \\
33 & -29 & 30 & -30 & 30 & -4 & -2 & 2 & -8 & 2 & -8 & 0 & -4 & -2 & 2 & -8 \\
33 & -33 & 34 & -30 & 30 & -4 & -2 & 2 & -8 & 2 & -8 & 0 & -4 & -2 & 2 & -8 \\
33 & -33 & 33 & -29 & 30 & -4 & -2 & 2 & -8 & 2 & -8 & 0 & -4 & -2 & 2 & -8 \\
33 & -33 & 33 & -33 & 34 & -4 & -2 & 2 & -8 & 2 & -8 & 0 & -4 & -2 & 2 & -8 \\
30 & -26 & 26 & -26 & 26 & -4 & 2 & 2 & -8 & 2 & -8 & 0 & -4 & -2 & 2 & -8 \\
27 & -23 & 23 & -23 & 23 & -5 & 6 & 2 & -8 & 2 & -8 & 0 & -4 & -2 & 2 & -8 \\
27 & -23 & 23 & -23 & 23 & -1 & -2 & 2 & -4 & 2 & -8 & 0 & -4 & -2 & 2 & -8 \\
24 & -20 & 20 & -20 & 20 & 2 & -5 & 1 & 0 & 2 & -8 & 0 & -4 & -2 & 2 & -8 \\
24 & -20 & 20 & -20 & 20 & -2 & -1 & 1 & -4 & 2 & -4 & 0 & -4 & -2 & 2 & -8 \\
21 & -17 & 17 & -17 & 17 & -2 & -1 & 1 & -4 & 1 & 0 & 0 & -4 & -2 & 2 & -8 \\
21 & -17 & 17 & -17 & 17 & -2 & -1 & 1 & -4 & 1 & -4 & 0 & -4 & 2 & 2 & -8 \\
18 & -14 & 14 & -14 & 14 & -2 & -1 & 1 & -4 & 1 & -4 & 0 & -5 & 6 & 2 & -8 \\
18 & -14 & 14 & -14 & 14 & -2 & -1 & 1 & -4 & 1 & -4 & 0 & -1 & -2 & 2 & -4 \\
15 & -11 & 11 & -11 & 11 & -2 & -1 & 1 & -4 & 1 & -4 & 0 & 2 & -5 & 1 & 0 \\
-1 & 1 & -1 & 1 & -1 & 0 & 0 & 0 & 0 & 1 & -1 & 1 & 0 & 0 & 0 & 0 
\end{bsmallmatrix}
$$
from, respectively, matrix level $W_\alpha$ to $C_{15}\oplus 0$, and vector level  $\frac14\mathbbm{1}_{16}$ to $\begin{bsmallmatrix}
    \mathbbm{1}_{15}\\0
\end{bsmallmatrix}$. 
\end{example}

\section{Computational aspects}\label{sec:computational_aspects_oscar}
Efficient algorithms based on the structural properties of free Lie algebras for computing signatures were developed in the \texttt{python} package \cite{RG20}. Several frameworks for machine learning tasks are provided in \cite{kidger2021signatory} or \cite{diehl2024fruits}. Signature barycenters are efficiently computed using log-coordinates and symbolic preprocessing in \cite{clausel2024barycenterfreenilpotentlie}, based on BCH series and Gr\"obner bases of modules in \texttt{sage}. The recent \texttt{Macaulay2} package \cite{amendola2025computingpathsignaturevarieties} implements several parts of \cite{bib:AFS2019} to study applied algebraic geometry questions related to signatures.
\par
We provide a novel \texttt{OSCAR} implementation of Lie group barycenters and signatures for piecewise linear and polynomial paths.
 \texttt{OSCAR} \cite{OSCAR-book,OSCAR} is an innovative \emph{Open Source Computer Algebra Research} system that is written in \texttt{julia}. It brings together powerful tools and compatible constructions relevant to this project, including multivariate arrays, Lie theory, non-commutative polynomials, or theory on Gröbner bases. 
\par
 We introduce two new structs \texttt{TruncTensorSeq} and \texttt{TruncTensorSeqElem} that implement the algebra of truncated tensor sequences according to \eqref{eq:truncTensSeq}. Basic constructors such as \texttt{one} or \texttt{sig\_axis} provide signatures of standard paths. More involved examples use Chen's identity (\Cref{thm:Chen}) or equivariance (\Cref{lem:equivariance_signature}) via group multiplication or matrix-tensor congruence, respectively.  

 \begin{example}
We compute the $2$-truncated signature of a $2$-dimensional $2$-segment path. 
\begin{verbatim}
k = 2; d = 2; m = 2
R, a = polynomial_ring_sig_transform(d,m)     # polynomial ring QQ[a₁₁,a₂₁,a₁₂,a₂₂]
TTSm = trunc_tensor_seq(R,k,m)                # creates a TruncTensorSeq

julia> matrix_tensorSeq_congruence(a,sig_axis(TTSm))       # a TruncTensorSeqElem
0-dimensional Array{QQMPolyRingElem, 0}:
1
⊕
2-element Vector{QQMPolyRingElem}:
 a₁₁ + a₁₂
 a₂₁ + a₂₂
⊕
2×2 Matrix{QQMPolyRingElem}:
 1//2*a₁₁^2 + a₁₁*a₁₂ + 1//2*a₁₂^2      1//2*a₁₁*a₂₁ + a₁₁*a₂₂ + 1//2*a₁₂*a₂₂
 1//2*a₁₁*a₂₁ + a₂₁*a₁₂ + 1//2*a₁₂*a₂₂  1//2*a₂₁^2 + a₂₁*a₂₂ + 1//2*a₂₂^2
\end{verbatim}
\end{example}

Furthermore, we introduce structs \texttt{FreeTruncSigAlgMultiv} and \texttt{FreeTruncSigAlgMultivElem} that implement the free associative algebra accoding to \eqref{eq:multivariate_free_algeba_Rs}. We implement a constructor for \texttt{FreeTruncSigAlgMultivElem}, called \texttt{free\_sig\_from\_sample},  that returns a sum of variables for the $i$-th sample element in our barycenter computation. 

\begin{example}We illustrate that inversion is polynomial according to \Cref{lem:expLogMulLinIn_ck}. 
\begin{verbatim}
F, s = free_trunc_sig_alg_multiv(3,2)        # FreeTruncSigAlgMultiv for k=3, N=2

julia> inv(free_sig_from_sample(1,F))        # inverse of s₁⁽³⁾ + s₁⁽²⁾+ s₁⁽¹⁾ + 1    
-s₁⁽¹⁾^3 + s₁⁽¹⁾^2 + s₁⁽¹⁾*s₁⁽²⁾ + s₁⁽²⁾*s₁⁽¹⁾ - s₁⁽¹⁾ - s₁⁽²⁾ - s₁⁽³⁾ + 1
\end{verbatim}
\end{example}

In addition to this, we implement a second constructor \texttt{free\_sig\_bary} that returns a sum of variables for the barycenter according to the proof of \Cref{lem:baryIsNonComPolyMap}. With the command \texttt{graded\_component} we project a \texttt{FreeTruncSigAlgMultivElem} to the $j$-th graded component with respect to the level grading.

\begin{example}\label{ex:polynomial_bary}
We compute $f_1,f_2$ and $f_3$ according to \eqref{eq:fj}. 
\begin{verbatim}
F, s = free_trunc_sig_alg_multiv(3,2)        # k = 3, N = 2
s1 = free_sig_from_sample(1,F)               # s₁⁽¹⁾ + s₁⁽²⁾ + s₁⁽³⁾ + 1
s2 = free_sig_from_sample(2,F)               # s₂⁽¹⁾ + s₂⁽²⁾ + s₂⁽³⁾ + 1
y = free_sig_bary(F)                         # y⁽¹⁾ + y⁽²⁾ + y⁽³⁾ + 1
g = QQ(1,2)*sum(log(inv(y)*si) for si in [s1,s2]) 

julia> graded_component(g+y,1)               # f_1
1//2*s₁⁽¹⁾ + 1//2*s₂⁽¹⁾

julia> graded_component(g+y,2)               # f_2
-1//4*s₁⁽¹⁾^2 + 1//4*s₁⁽¹⁾*y⁽¹⁾ - 1//4*s₂⁽¹⁾^2 + 1//4*s₂⁽¹⁾*y⁽¹⁾ - 1//4*y⁽¹⁾*s₁⁽¹⁾ 
- 1//4*y⁽¹⁾*s₂⁽¹⁾ + 1//2*y⁽¹⁾^2 + 1//2*s₁⁽²⁾ + 1//2*s₂⁽²⁾

julia> graded_component(g+y,3)               # f_3
1//6*s₁⁽¹⁾^3 - 1//6*s₁⁽¹⁾^2*y⁽¹⁾ + 1//12*s₁⁽¹⁾*y⁽¹⁾*s₁⁽¹⁾ - 1//12*s₁⁽¹⁾*y⁽¹⁾^2 
+ 1//6*s₂⁽¹⁾^3 - 1//6*s₂⁽¹⁾^2*y⁽¹⁾ + 1//12*s₂⁽¹⁾*y⁽¹⁾*s₂⁽¹⁾ - 1//12*s₂⁽¹⁾*y⁽¹⁾^2 
+ 1//12*y⁽¹⁾*s₁⁽¹⁾^2 - 1//12*y⁽¹⁾*s₁⁽¹⁾*y⁽¹⁾ + 1//12*y⁽¹⁾*s₂⁽¹⁾^2 
- 1//12*y⁽¹⁾*s₂⁽¹⁾*y⁽¹⁾ + 1//6*y⁽¹⁾^2*s₁⁽¹⁾ + 1//6*y⁽¹⁾^2*s₂⁽¹⁾ - 1//3*y⁽¹⁾^3 
- 1//4*s₁⁽¹⁾*s₁⁽²⁾ + 1//4*s₁⁽¹⁾*y⁽²⁾ - 1//4*s₁⁽²⁾*s₁⁽¹⁾ + 1//4*s₁⁽²⁾*y⁽¹⁾ 
- 1//4*s₂⁽¹⁾*s₂⁽²⁾ + 1//4*s₂⁽¹⁾*y⁽²⁾ - 1//4*s₂⁽²⁾*s₂⁽¹⁾ + 1//4*s₂⁽²⁾*y⁽¹⁾ 
- 1//4*y⁽¹⁾*s₁⁽²⁾ - 1//4*y⁽¹⁾*s₂⁽²⁾ + 1//2*y⁽¹⁾*y⁽²⁾ - 1//4*y⁽²⁾*s₁⁽¹⁾ 
- 1//4*y⁽²⁾*s₂⁽¹⁾ + 1//2*y⁽²⁾*y⁽¹⁾ + 1//2*s₁⁽³⁾ + 1//2*s₂⁽³⁾
\end{verbatim}
\end{example}

We implement an evaluation \texttt{eval} from \texttt{TruncTensorSeqElem} to \texttt{FreeTruncSigAlgMultivElem} according to \eqref{eq:evalNonComPolyTensors}. 
With this evaluation we realize the barycenter \texttt{bary} as a non-commutative map constructed according to \Cref{lem:baryIsNonComPolyMap}. Finally,  \texttt{ideal\_of\_entries} interprets all entries in \texttt{FreeTruncSigAlgMultivElem} as relations and returns its ideal. 
In this work, we do not focus on solving the resulting system \eqref{eq:pol_system} for large dimension, number of segments, or truncation levels. For our small systems, we can use the built-in Gr\"obner machinery from \texttt{OSCAR} (in fact \texttt{Singular}). 

\begin{example}\label{ex:lowerBoundB2211_OSCAR}We verify \Cref{ex:lowerBoundB2211} with out \texttt{OSCAR} implementation. 
\begin{verbatim}
R, a = polynomial_ring_sig_transform(2,1)                          # d = 2, m = 1
TTSm = trunc_tensor_seq(R,2,1)                                     # k = 2
sigX1 = matrix_tensorSeq_congruence([QQ(1,2);1],sig_axis(TTSm))
sigX2 = matrix_tensorSeq_congruence([1;QQ(1,2)],sig_axis(TTSm))
sY = bary([sigX1,sigX2])
s_pwln = matrix_tensorSeq_congruence(a,sig_axis(TTSm))             # Ansatz
I = ideal_of_entries(s_pwln - sY)                                  # relations 

julia> groebner_basis(I)                                           # solve 
Gröbner basis with elements
  1: 4*a₂₁ - 3
  2: 4*a₁₁ - 3
with respect to the ordering
  degrevlex([a₁₁, a₂₁])
\end{verbatim}
\end{example}
We show the missing claim from \Cref{fig:Bdkalpha} on path recovery for cubic tensor truncation. For this we use the constructor \texttt{sig\_segment\_standard\_direction} for  \texttt{TruncTensorSeqElem} which returns the signature of a segment $E^j$ according to \Cref{def:canonicalAxisSubpath}. 

\begin{lemma}\label{prop:B2311}
We have $B_{d,3}(1,1)=3$ for every $d\geq 2$. 
\end{lemma}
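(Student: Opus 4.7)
The plan is to establish the upper bound $B_{d,3}(1,1) \leq 3$ by an explicit construction and the lower bound $B_{d,3}(1,1) \geq 3$ through a level-by-level obstruction in the Lie algebra.

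For the upper bound, I would first compute the log-barycenter of $\sigma(X_1) = \exp(v_1)$ and $\sigma(X_2) = \exp(v_2)$ for segment paths $X_i(t) = t v_i$, by substituting $\mathbf{m} = \exp(m_1 + m_2 + m_3)$ into the defining equation \eqref{eq:barycenterCondition} and expanding via BCH up to truncation level $3$. Solving level by level yields $m_1 = \tfrac{v_1 + v_2}{2}$, $m_2 = 0$ (the crucial step, following from $[m_1, v_1 + v_2] = [m_1, 2m_1] = 0$), and $m_3 = -\tfrac{1}{48}[v_1 - v_2, [v_1, v_2]]$. I would then propose the explicit $3$-segment ansatz with directions $a = \tfrac{3v_1 + v_2}{4}$, $b = -\tfrac{v_1 + 3v_2}{4}$, $c = v_2$ (motivated by the diagram in \Cref{fig:Bdkalpha}) and verify via another BCH expansion that $\log(\exp(a)\exp(b)\exp(c))$ matches these three levels; then Chen's identity (\Cref{thm:Chen}) gives $\bary(\sigma(X_1), \sigma(X_2)) = \sigma(Y)$ for the corresponding $3$-segment path $Y$ in $\R^d$.

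For the lower bound, I would specialize to the sample $v_1 = (1, \tfrac{1}{2})$ and $v_2 = (1, -\tfrac{1}{2})$, embedded into $\R^d$ via the first two coordinates. Any candidate $2$-segment path with directions $a, b \in \R^d$ has log-signature components $(a + b,\; \tfrac{1}{2}[a, b],\; \tfrac{1}{12}[a - b, [a, b]])$ at levels $1, 2, 3$, by Chen's identity and BCH. Matching level $2$ with the vanishing $m_2 = 0$ forces $[a, b] = 0$, so $a$ and $b$ are parallel in $\R^d$; but then the level-$3$ bracket automatically vanishes. Yet for the chosen sample, $[v_1, v_2] = -[e_1, e_2]$ and $v_1 - v_2 = e_2$, so $m_3$ is a nonzero multiple of the Hall basis element $[e_2, [e_1, e_2]]$ in $\LieAlg_{2,3}$, giving the contradiction.

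The main obstacle is the BCH bookkeeping at level $3$, which is mechanical but notationally heavy; both the ansatz verification and the non-realizability in $2$ segments can alternatively be certified using the \texttt{OSCAR} framework of \Cref{sec:computational_aspects_oscar} by solving or showing infeasibility of the polynomial system \eqref{eq:pol_system} as in \Cref{ex:lowerBoundB2211_OSCAR}.
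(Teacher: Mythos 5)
Your proposal is correct, and it reaches the conclusion by a genuinely different route from the paper. The paper's proof is a Gr\"obner-basis certificate carried out in \texttt{OSCAR}: after the reduction to the canonical sample $\sigma(E^1),\sigma(E^2)$ from the proof of \Cref{prop:well_def_bary_order}, it solves the polynomial system \eqref{eq:pol_system} for $m=3$ (yielding the one-parameter family \eqref{eq:matrixSolutionLastREcovery}, of which your ansatz $a=\tfrac{3v_1+v_2}{4}$, $b=-\tfrac{v_1+3v_2}{4}$, $c=v_2$ is exactly the member with $\omega=\tfrac14$) and observes that for $m=2$ the resulting ideal is the unit ideal. You instead work in log-coordinates with hand BCH computations. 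Your closed form for the log-barycenter, $\bigl(\tfrac{v_1+v_2}{2},\,0,\,-\tfrac1{48}[v_1-v_2,[v_1,v_2]]\bigr)$, is correct, and your three-segment ansatz does match at all three levels (the level-$3$ check, which you defer as mechanical, works out and is independently certified by the paper's computation in \Cref{ex:B2311}). Your lower bound is the more illuminating half: any two-segment log-signature has the form $\bigl(a+b,\tfrac12[a,b],\tfrac1{12}[a-b,[a,b]]\bigr)$, so vanishing at level $2$ forces vanishing at level $3$, while the barycenter has $m_2=0$ but $m_3$ a nonzero multiple of $[e_2,[e_1,e_2]]$. This replaces the paper's opaque ``the ideal is the entire ring'' certificate with a structural obstruction that is uniform in $d$ and explains \emph{why} two segments cannot suffice; the trade-off is that the paper's approach requires no BCH bookkeeping and scales mechanically to other small instances.
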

\begin{proof}
    As shown in the proof of \Cref{prop:well_def_bary_order} we have to perform a single recovery in \texttt{OSCAR}.
\begin{verbatim}
R, a = polynomial_ring_sig_transform(2,3)                       # d=2, m=3
TTSm = trunc_tensor_seq(R,3,3)                                  # k=3
TTSd = trunc_tensor_seq(R,3,2) 
sY = bary([sig_segment_standard_direction(TTSd,j) for j in (1,2)]) 
s_pwln = matrix_tensorSeq_congruence(a,sig_axis(TTSm)) 
I = ideal_of_entries(s_pwln - sY) + ideal(R,[a[1,1]-QQ(3,4)])   # a[1,1]=3/4 

julia> normal_form.(a, Ref(I))
2×3 Matrix{QQMPolyRingElem}:
 3//4  -1//4  0
 1//4  -3//4  1
\end{verbatim}
Note that no path recovery is possible for two segments. This can be shown with an analogous computation but for $m=2$. Then, it is easy to see that the ideal in that case is already the entire ring. 
\end{proof}

\begin{example}\label{ex:B2311}
 In \Cref{fig:Bdkalpha} we illustrate the recovered path from the barycenter of the signatures of $X_1=A\cdot E^1$ and $X_2=A\cdot E^2$ with $A=\begin{bsmallmatrix}1&1\\\frac12&-\frac12\end{bsmallmatrix}$ in the case of truncation level $k=3$. We illustrate the possible recovered path  
 $$Y=A\begin{bsmallmatrix}\frac34&-\frac14&0\\\frac14&-\frac34&1\end{bsmallmatrix}\cdot\axis^3=\begin{bsmallmatrix}1&-1&1\\\frac14&\frac14&-\frac12\end{bsmallmatrix}\cdot\axis^3$$
 using the recovery of $\bary(\sigma(E^1),\sigma(E^2))$ from \Cref{prop:B2311}. In fact, the infinite space of $3$-segment recovered paths is precisely represented by the linear transforms of the axis path,
 \begin{equation}\label{eq:matrixSolutionLastREcovery}\begin{bmatrix}1&-1&1\\\omega&\frac1{8\omega}-\omega&-\frac1{8\omega}\end{bmatrix}\end{equation} with $\omega\not=0$. In \Cref{fig:PathRec_k3_varOmega} we illustrate several recovered paths for different values of $\omega$. 
To show that \eqref{eq:matrixSolutionLastREcovery} precisely represents this set of solutions, we use Gr\"obner bases and see that the system \eqref{eq:pol_system} is the affine variety 
 \begin{equation}\label{eq:varietyLastInfinitRecovery}
 \mathcal{V}(a_{13}-1,
  a_{12}+1,
  a_{21}+a_{22}+a_{23},
  a_{11}+a_{12}+a_{13}-1,
  8a_{22}a_{23} + 8a_{23}^2 - 1).
  \end{equation}
  If $\omega=0$, then the intersection of \eqref{eq:varietyLastInfinitRecovery} with $\mathcal{V}(a_{11}-\omega)$ is empty. Otherwise, the intersection contains the single matrix \eqref{eq:matrixSolutionLastREcovery}. 
 In particular, the second interpolation point of every $3$-segment recovered path is $(0,\frac1{8\omega})$ with $\omega\not=0$.  
Geometrically, this implies that no $3$-segment recovered path $Y$ lies in the convex hull that is spanned by the segments $X_1$ and $X_2$, i.e., $Y([0,1])\nsubseteq\mathsf{conv}\left((0,0),(1,\frac12),(1,-\frac12)\right)$.
\end{example}

\begin{figure}[h]
    \centering
    \begin{subfigure}{0.24\textwidth}
        \centering
\begin{tikzpicture}[scale=0.4]
\begin{axis}[
   xtick={0,1/2,1},
    xticklabels={$0$,$\frac{1}{2}$,$1$}, 
    ytick={-1/2,0,1/4,1/2},
    yticklabels={$-\frac12$,$0$,$\omega$,$\frac12$},
    xmin=-0.2, xmax=1.5,
    ymin=-0.75, ymax=0.95,
    domain=0:1,
]

    \addplot[CeruleanBlue, very thick] coordinates {(0,0) (1,1/2)};

        \addplot[myGreen, very thick] coordinates {(0,0) (1,-1/2)};

    \addplot[darkred, very thick] coordinates {(0,0) (1,1/4) (0,1/2) (1,0)};

    \addplot[mark=*, only marks] coordinates {(0,0) (1,1/4) (0,1/2) (1,0) (1,-1/2) (1,1/2)};

\node[right] at (axis cs:1,0) {$Y$};
\node[right] at (axis cs:1,1/2) {$X_1$};
\node[right] at (axis cs:1,-1/2) {$X_2$};

\end{axis}
\end{tikzpicture}
\caption*{$\omega=\frac14$}
\end{subfigure}
\hfill
        \begin{subfigure}{0.24\textwidth}
        \centering
\begin{tikzpicture}[scale=0.4]
\begin{axis}[
   xtick={0,1/2,1},
    xticklabels={$0$,$\frac{1}{2}$,$1$}, 
    ytick={-1/2,0,353553/1000000,1/2},
    yticklabels={$-\frac12$,$0$,$\omega$,$\frac12$},
    xmin=-0.2, xmax=1.5,
    ymin=-0.75, ymax=0.95,
    domain=0:1,
]

    \addplot[CeruleanBlue, very thick] coordinates {(0,0) (1,1/2)};

        \addplot[myGreen, very thick] coordinates {(0,0) (1,-1/2)};

    \addplot[darkred, very thick] coordinates {(0,0) (1,353553/1000000) (0,353553/1000000) (1,0)};

    \addplot[mark=*, only marks] coordinates {(0,0) (1,353553/1000000) (0,353553/1000000) (1,0) (1,-1/2) (1,1/2)};

\node[right] at (axis cs:1,0) {$Y$};
\node[right] at (axis cs:1,1/2) {$X_1$};
\node[right] at (axis cs:1,-1/2) {$X_2$};

\end{axis}
\end{tikzpicture}
\caption*{$\omega=\frac1{2\sqrt{2}}$}
\end{subfigure}
\hfill
\begin{subfigure}{0.24\textwidth}
        \centering
\begin{tikzpicture}[scale=0.4]
\begin{axis}[
   xtick={0,1/2,1},
    xticklabels={$0$,$\frac{1}{2}$,$1$}, 
    ytick={-1/2,0,1/2,3/4},
    yticklabels={$-\frac12$,$0$,$\frac12$,$\omega$},
    xmin=-0.2, xmax=1.5,
    ymin=-0.75, ymax=0.95,
    domain=0:1,
]

    \addplot[CeruleanBlue, very thick] coordinates {(0,0) (1,1/2)};

        \addplot[myGreen, very thick] coordinates {(0,0) (1,-1/2)};

    \addplot[darkred, very thick] coordinates {(0,0) (1,3/4) (0,1/6) (1,0)};

    \addplot[mark=*, only marks] coordinates {(0,0) (1,3/4) (0,1/6)(1,0) (1,-1/2) (1,1/2)};

\node[right] at (axis cs:1,0) {$Y$};
\node[right] at (axis cs:1,1/2) {$X_1$};
\node[right] at (axis cs:1,-1/2) {$X_2$};

\end{axis}
\end{tikzpicture}
\caption*{$\omega=\frac34$}
\end{subfigure}
\hfill
\begin{subfigure}{0.24\textwidth}
        \centering
\begin{tikzpicture}[scale=0.4]
\begin{axis}[
   xtick={0,1/2,1},
    xticklabels={$0$,$\frac{1}{2}$,$1$}, 
    ytick={-1/2,-1/4,0,1/2},
    yticklabels={$-\frac12$,$\omega$,$0$,$\frac12$},
    xmin=-0.2, xmax=1.5,
    ymin=-0.75, ymax=0.95,
    domain=0:1,
]

    \addplot[CeruleanBlue, very thick] coordinates {(0,0) (1,1/2)};

        \addplot[myGreen, very thick] coordinates {(0,0) (1,-1/2)};

    \addplot[darkred, very thick] coordinates {(0,0) (1,-1/4) (0,-1/2) (1,0)};

    \addplot[mark=*, only marks] coordinates {(0,0) (1,-1/4) (0,-1/2) (1,0) (1,-1/2) (1,1/2)};

\node[right] at (axis cs:1,0) {$Y$};
\node[right] at (axis cs:1,1/2) {$X_1$};
\node[right] at (axis cs:1,-1/2) {$X_2$};

\end{axis}
\end{tikzpicture}
\caption*{$\omega=-\frac14$}
\end{subfigure}
\caption{Recovered paths $Y$ parametrized by $\omega\in\R\setminus\{0\}$ for $\bary\left(\sigma(X_1),\sigma(X_2)\right)$ in truncation level $k=3$ according to \Cref{ex:B2311,ex:computeAreaOfBarycenterRecovery}.}
\label{fig:PathRec_k3_varOmega}
\end{figure}
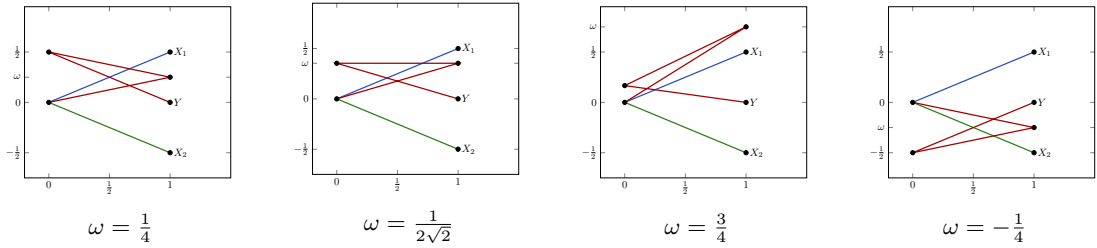

\begin{example}\label{ex:computeAreaOfBarycenterRecovery}
We use geometric methods to compute the signed area of the path represented by \eqref{eq:matrixSolutionLastREcovery}, i.e., the polygonal loop given by the columns of 
\begin{equation}\label{eq:polygonWithAreaZero}
\begin{bmatrix}
0&1&0&1&0\\
0&\omega&\frac1{8\omega}&0&0
\end{bmatrix}
\end{equation}
in ascending order and $\omega\in\R\setminus\{0\}$.
See \Cref{fig:PathRec_k3_varOmega} for an illustration with various choices of $\omega$. The polygonal loop \eqref{eq:polygonWithAreaZero} is self-intersecting with one intersection point 
$\left(x_\omega,y_\omega\right):=(\frac1{8\omega^2+1},\frac\omega{8\omega^2+1})$. 
Therefore we have to compute the areas of the two triangle regions 
\begin{equation}\label{eq:triangels}\begin{bmatrix}
0&x_\omega&1&0\\
0&y_\omega&0&0
\end{bmatrix}\text{ and }\begin{bmatrix}
x_\omega&1&0&x_\omega\\
y_\omega&\omega&\frac1{8\omega} &y_\omega  
\end{bmatrix},
\end{equation}
which coincide using  \emph{Gau{\ss}'s area formula} \cite[p. 125]{koecher2007ebene},
\begin{align*}
&\frac12\left\lvert
\det\left(\begin{matrix}0&x_\omega\\0 &y_\omega\end{matrix}\right)+
\det\left(\begin{matrix}x_\omega&1\\y_\omega &0\end{matrix}\right)
+
\det\left(\begin{matrix}1&0\\0 &0\end{matrix}\right)
\right\rvert\\
&=
\frac12\left\lvert\det\left(\begin{matrix}x_\omega&1\\y_\omega &\omega\end{matrix}\right)
+
\det\left(\begin{matrix}1&0\\\omega&\frac1{8\omega} \end{matrix}\right)
+
\det\left(\begin{matrix}0&x_\omega\\\frac1{8\omega}&y_\omega \end{matrix}\right)\right\rvert=\frac12y_\omega
\end{align*}
also known as the  \emph{shoelace formula}; see \cite[Chapter 13.1.3]{laaksonen2020guide}.  The \emph{winding numbers} of the triangles \eqref{eq:triangels} cancel each other out, so the polygonal loop \eqref{eq:polygonWithAreaZero} has zero signed area. 
This illustrates \Cref{rem:polynomialViaBCH} with the factorization according to \eqref{eq:aBCHk2}, showing that the signed area of the barycenter is the mean of the signed areas of the sample. Segments have zero signed area (see \cite[Theorem 1.4]{FLS24} or \cite[Corollary 7.3]{AGOSS23}), so we already know that the recovered path represented by \eqref{eq:matrixSolutionLastREcovery} has zero signed area. 
\end{example}

\section*{Data availability}
The implementation of our proposed methods, along with the data and experiments, is publicly available at \url{https://github.com/leonardSchmitz/barycenter-signature}.

\section*{Acknowledgements}

We thank Antony Della Vecchia and Marcel Wack for several suggestions and comments on earlier versions of our \texttt{OSCAR} implementation. 

The authors acknowledge support from DFG CRC/TRR 388 ``Rough Analysis, Stochastic Dynamics and Related Fields'', Project A04.

\bibliographystyle{siam} 
\bibliography{refs.bib}

\end{document}